\theoremstyle{plain}
\newtheorem{theorem}{Theorem}[section]
\newtheorem{lemma}[theorem]{Lemma}
\newtheorem{proposition}[theorem]{Proposition}
\newtheorem{corollary}[theorem]{Corollary}
\theoremstyle{definition}
\newtheorem{definition}[theorem]{Definition}
\newtheorem{example}[theorem]{Example}
\newtheorem{remark}[theorem]{Remark}
\newcommand{\A}{\mathbb{A}}
\newcommand{\C}{\mathbb{C}}
\newcommand{\F}{\mathbb{F}}
\newcommand{\G}{\mathbb{G}}
\newcommand{\PP}{\mathbb{P}}
\newcommand{\R}{\mathbb{R}}
\newcommand{\Z}{\mathbb{Z}}
\newcommand{\frakm}{\mathfrak{m}}
\newcommand{\calA}{\mathcal{A}}
\newcommand{\calB}{\mathcal{B}}
\newcommand{\calV}{\mathcal{V}}
\newcommand{\Spec}{\mathrm{Spec}}
\newcommand{\Tr}{\mathrm{Tr}}
\newcommand{\Hom}{\mathrm{Hom}}
\newcommand{\Gal}{\mathrm{Gal}}
\newcommand{\pr}{\mathrm{pr}}
\newcommand{\ext}{\mathrm{ext}}
\DeclareMathOperator{\NP}{\mathrm{NP}}
\DeclareMathOperator{\HP}{\mathrm{HP}}
\DeclareMathOperator{\cHP}{\mathrm{cHP}}
\newcommand{\tr}{\mathrm{tr}}
\newcommand{\D}{\mathrm{d}}
\newcommand{\Frob}{\mathrm{Frob}}
\mathchardef\mhyphen="2D
\DeclareOldFontCommand{\sf}{\normalfont\sffamily}{\mathsf}
\title{Newton Polygons of Sums on Curves I: Local-to-Global Theorems}
\author{Joe Kramer-Miller and James Upton}
\date{\today}
\begin{document}

\maketitle

\begin{abstract}	
	The purpose of this article is to study
	Newton polygons of certain abelian $L$-functions
	on curves. Let $X$ be a smooth affine curve over a finite field $\F_q$
	and let $\rho:\pi_1(X) \to \C_p^\times$ be a finite character of order $p^n$.
	By previous work of the first author, the
	Newton polygon $\NP(\rho)$ lies
	above a `Hodge polygon' $\HP(\rho)$, which is defined using local ramification invariants
	of $\rho$. In this article we study the touching between these two polygons.
	We prove that $\NP(\rho)$ and $\HP(\rho)$ share a vertex if and only
	if a corresponding vertex is shared between the Newton and Hodge polygons of
	`local' $L$-functions associated to each ramified point of $\rho$. 
	As a consequence, we determine a necessary and sufficient condition for the coincidence of $\NP(\rho)$ and $\HP(\rho)$. 
	
\end{abstract}

\tableofcontents

\section{Introduction}
\subsection{Motivation}

Let $p$ be an odd prime and let $q$ be a power of $p$. Let $X/\F_q$ be a smooth affine variety with \'etale fundamental group $\pi_1(X)$. Consider a continuous character $\rho: \pi_1(X) \to \C^\times$ of the \'etale fundamental group of $X$.
A fundamental problem in arithmetic geometry is to understand the character sums
\begin{align*}
S_k(X,\rho)= \sum_{x \in X(\mathbb{F}_{q^k})} \rho(\text{Frob}_x),
\end{align*}
where $\text{Frob}_x$ denotes the Frobenius element at $x$. By the Weil conjectures, there exist algebraic integers $\alpha_1,\dots,\alpha_{d_1}, \beta_1,\dots,\beta_{d_2}$
such that
\begin{align*}
S_k(X,\rho) &= \sum_{i=1}^{d_1} \beta_i^k - \sum_{i=1}^{d_2} \alpha_i^k.
\end{align*}
What can we say about these algebraic integers? The Riemann Hypothesis states
that the $\alpha_i$ and $\beta_i$ are $q$-Weil numbers, i.e. that there exist positive integers 
$a_1,\dots,a_{d_1}, b_1,\dots,b_{d_2}$ such that for any Archemedian place $|\cdot |_\infty$
we have $|\alpha_i|_\infty=q^{\frac{a_i}{2}}$ and $|\beta_i|_\infty=q^{\frac{b_i}{2}}$. 
We also understand what happens at the finite places away from $p$:
both the $\alpha_i$ and the $\beta_i$ are $\ell$-adic units for any prime $\ell\neq p$.

The $p$-adic properties of these numbers are less definitive.
There is no exact formula as in the $\ell$-adic and Archimedean cases.
In general, the best one can hope for are lower bounds on the $p$-adic valuations, i.e.,
that the Newton polygon of $\rho$ lies above some sort of Hodge polygon. Under additional 
congruence conditions involving $p$ and the local monodromy of $\rho$, one may hope to completely determine the $p$-adic valuations (i.e. that the Newton and Hodge
polygons coincide). The classical approach to `Newton over Hodge' results for exponential sums has been restricted to the
case where $X$ is an algebraic torus. This wonderful theory was developed by Robba, Adolphson-Sperber, Wan,
and others, building off of ideas of Dwork.
For instance, Adolphson-Sperber prove a general `Newton over Hodge' result for exponential sums over $\mathbb{G}_m^{d}$
(see \cite{Adolphson-Sperber}) and Wan establishes a criterion for equality of the two polygons (see \cite{Wan4}).
Outside of the $\mathbb{G}_m^d$ case nothing was known until recent work of the first author (see \cite{Kramer-Miller3}
and \cite{KramerMiller}). In these articles Kramer-Miller establishes a `Newton over Hodge' theorem for 
abelian $L$-functions on curves of any genus. Unfortunately, this work ignores the question of touching or equality
between these two polygons.

The purpose of this article is to study touching between the Newton and Hodge polygons
of character sums on higher genus curves. Our main result establishes a local criterion that is necessary and sufficient for the Newton and Hodge polygons to touch at a vertex. It roughly states the following: Assume the compactification of $X$ is an ordinary curve $\overline{X}$.\footnote{By the Deuring-Shafarevich formula, if $\overline{X}$ is non-ordinary, we immediately know that the Newton polygon will be strictly above the Hodge polygon} For each point at infinity $P \in \overline{X} \backslash X$, we may localize $\rho$ at $P$ and
construct a local-to-global extension $\rho_{P}^\ext$, in the sense of Gabber-Katz. This
$\rho_P^\ext$ is a representation of $\pi_1(\mathbb{G}_m)$, whose localization at $\infty$ agrees with the localization of $\rho$ at $P$. When $\rho$ has order $p^n$, we prove that the Newton and Hodge polygons of $\rho$ share a vertex if and only if the Newton and Hodge polygons of $\rho_P^\ext$ share a corresponding vertex for every $P \in \overline{X}\backslash X$. This
allows us to reduce the study of Newton-Hodge interaction to the classical $\mathbb{G}_m$ case. In particular, we establish a necessary and sufficient local condition for the Newton and Hodge polygons of $\rho$ to agree. In subsequent papers
we will apply these local-to-global theorems to study $\Z_p$-towers of curves. 

\subsection{Main results}\label{ss:mainresults}

Suppose now that $X$ is a smooth affine curve over $\F_q$ with smooth compactification $\overline{X}$. Let $S$ denote the complement of $X$ in $\overline{X}$. Let $\rho:\pi_1(X) \to \C^\times_p$ be a character of order $p^n$, assumed to be totally ramified at each $P \in S$. We will write
\begin{equation*}
	L(\rho,s) =	\prod_{x \in |X|} \frac{1}{1-\rho(\Frob_x)s^{\deg(x)}}	\in 1+s\Z_p[\rho][s]
\end{equation*}
for the Artin $L$-function of $\rho$ and $\NP_q(\rho)$ for the $q$-adic Newton polygon of $L(\rho,s)$.

For each $P \in S$, let $F_P$ denote the local field at $P$ and let $G_P$ denote the absolute Galois group of $F_P$. We localize $\rho$ to obtain a continuous character
\begin{equation*}
	\rho_P:G_P \to \C^\times.
\end{equation*}
By a theorem of Katz-Gabber, the local character $\rho_P$ extends to a character $\rho_P^\ext$ of the fundamental group $\pi_1(\A_{\F_q}^1)$. We may therefore speak of the \emph{local Newton polygon} $\NP_q(\rho_P^\ext)$ of $\rho$ at $P$. If $d_P$ denotes the Swan conductor of $\rho$ at $P$, then we define the \emph{local Hodge polygon} $\HP_q(\rho_P^\ext)$ of $\rho$ at $P$ to be the convex polygon with slope set
	\begin{equation*}
		\left\{\frac{1}{d_P}, \dots, \frac{d_P - 1}{d_P}\right\}.
	\end{equation*} 
	By a theorem of Liu-Wei \cite{Liu-Wei}, the Newton polygon $\NP_q(\rho_P^\ext)$ lies above $\HP_q(\rho_P^\ext)$ and both polygons have the same terminal point.
	
	Let $g$ denote the genus of $\overline{X}$. The \emph{global Hodge polygon} $\HP_q(\rho)$ of $\rho$ is obtained by concatenating the local Hodge polygons with $g-1+|S|$ segments of slope $0$ and slope $1$. In other words, $\HP_q(\rho)$ is the the convex polygon with slope set:
	\begin{equation*} 
	\big 
	\{\underbrace{0,\dots,0}_{g-1+|S|}
	\big \}\sqcup \big 
	\{\underbrace{1,\dots,1}_{g-1+|S|}
	\big \}
	\sqcup \bigsqcup_{P \in S} \left\{\frac{1}{d_P}, \dots, \frac{d_P - 1}{d_P}\right\}.
	\end{equation*}
	By a theorem of the first author \cite{Kramer-Miller3}, the Newton polygon $\NP_q(\rho)$ lies above $\HP_q(\rho)$ and both polygons have the same terminal point.
	
	To state our main theorem we introduce the following convention: For each $r > 0$, the \emph{$r$-truncated Newton polygon} $\NP_q^{<r}(\rho)$ is obtained from $\NP_q(\rho)$ by removing all segments of slope $\geq r$. We will use similar notation for all Newton and Hodge polygons attached to $\rho$.
	
	\begin{theorem}
		\label{intro theorem: local touching of polygons givesglobal touching}
		Assume that $X$ is ordinary. Let $r \in [0,1]$. Then $\HP_q^{< r}(\rho)$ and $\NP_q^{< r}(\rho)$ have the same terminal point
		if and only if $\HP_q^{< r}(\rho_P^\ext)$
		and $\NP_q^{< r}(\rho_P^\ext)$ have the same terminal point for each $P \in S$.
	\end{theorem}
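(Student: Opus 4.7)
Both the global L-function and each local L-function $L(\rho_P^{\ext},s)$ admit a cohomological interpretation: up to trivial factors they are characteristic polynomials of Frobenius on overconvergent $F$-isocrystals, so their Newton polygons are read off from finite-rank Frobenius modules, which I will write $M(\rho)$ and $M(\rho_P^{\ext})$, over suitable $p$-adic algebras. Such modules underlie the first author's global `Newton over Hodge' theorem, and the Hodge polygon is the polygon of a Hodge filtration on $M(\rho)$. The ordinariness of $\overline X$ implies that the $g-1+|S|$ slope-$0$ and slope-$1$ segments of $\HP_q(\rho)$ are accounted for by the cohomology of $\overline X$ with constant coefficients, so for $r \in (0,1)$ the problem reduces to comparing the \emph{interior} (slopes in $(0,1)$) parts of $M(\rho)$ and of the $M(\rho_P^{\ext})$; the boundary cases $r=0$ and $r=1$ are handled separately by the same mechanism.

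The first main step is a Frobenius-equivariant local-to-global decomposition: modulo the ordinary contribution of $\overline X$, the interior part of $M(\rho)$ decomposes, as a Frobenius module equipped with its Hodge filtration, as a direct sum of the interior parts of the local modules $M(\rho_P^{\ext})$. The Gabber--Katz extension is essential because it upgrades the formal local data at each puncture to a global object on $\A^1_{\F_q}$ whose cohomology is computable; the decomposition is then expected to follow from a Mayer--Vietoris or excision argument in rigid cohomology with $\rho$-coefficients applied to $\overline X$, its affine locus $X$, and the formal disks at each $P \in S$, combined with the ordinary decomposition of $H^1(\overline X)$. Granting this decomposition, the theorem follows from a Hodge--Newton style principle: $\NP_q^{<r}$ and $\HP_q^{<r}$ share a terminal point on a filtered Frobenius module precisely when the module admits a Frobenius-stable Hodge--Newton subobject of the correct rank supporting the slope-$<r$ part. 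The forward direction then restricts such a global subobject through the decomposition to each local summand; the reverse direction reassembles local Hodge--Newton subobjects into a global one, with the ordinary piece supplying the slope-$0$ part.

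The principal obstacle is the first step: constructing the local-to-global decomposition with sufficient precision to control not merely the multiset of Newton slopes but the full Hodge filtration piece-by-piece, so that Hodge--Newton subobjects transport in both directions. Identifying the interior Hodge data of $M(\rho)$ at each $P$ with those defined via the Gabber--Katz extension and the Swan conductor $d_P$ --- rather than only up to an abstract coincidence of polygons --- is the crux. The ordinariness hypothesis on $\overline X$ enters precisely at this point, to fix the rank of the slope-$0$ contribution from $H^1(\overline X)$; without it, extra slopes in $(0,1)$ coming from the curve would pollute the decomposition and the local criterion would no longer suffice.
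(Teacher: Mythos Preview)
Your proposal rests on a Frobenius-equivariant direct-sum decomposition of the interior of $M(\rho)$ into the local pieces $M(\rho_P^{\ext})$. This decomposition does not exist in general, and its nonexistence is the whole point of the theorem. If the interior of $M(\rho)$ split as a Frobenius module into the local pieces, the global Newton polygon (after removing the slope-$0$ and slope-$1$ segments) would \emph{equal} the concatenation of the local Newton polygons, not merely touch the Hodge polygon at the same vertices. That stronger statement is false: the matrix of the global Dwork operator genuinely mixes the local blocks, and the global Newton polygon can differ from the concatenation of the local ones away from the touching points. So the obstacle you flag in your last paragraph is not a technical matter of making the decomposition precise enough to carry Hodge data; the decomposition is simply not there.

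The paper's argument is quite different and is designed exactly to circumvent this. It works with the explicit matrix $\Psi$ of the Dwork operator $\tilde\Theta$ in a carefully chosen basis of the overconvergent space, and constructs a second matrix $\Psi'$ which \emph{is} block-diagonal in the local pieces. The matrices $\Psi$ and $\Psi'$ are not similar and do not have the same Newton polygon; rather, $\Psi'$ is shown to be a \emph{$\pi$-adic $r$-perturbation} of $\Psi$, meaning their difference is column-by-column small relative to the column Hodge slopes. A Perturbation Lemma (proved by an elementary wedge-power estimate) shows that this relation is exactly strong enough to transfer the property ``$\NP^{<r}$ and $\cHP^{<r}$ share a terminal point'' between $\Psi$ and $\Psi'$, in both directions. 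The $\bm\delta$-Hodge condition on $\rho$ (verified separately for finite characters) is what makes the perturbation estimate go through, and ordinariness of $\overline X$ fixes the number of slope-$0$ columns. So the mechanism is matrix-analytic perturbation rather than a cohomological splitting or Hodge--Newton subobjects; your Mayer--Vietoris and Hodge--Newton ingredients do not appear.
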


	\begin{remark}
		In Theorem \ref{t:main} we prove the same result for a much wider class of characters, which we call \emph{$\bm\delta$-Hodge} characters. This general statement will be used to study equicharacteristic $L$-functions
		and $T$-adic $L$-functions in \cite{Kramer-Miller-Upton2} and \cite{Upton2}.
	\end{remark}
	
	As a corollary to Theorem \ref{intro theorem: local touching of polygons givesglobal touching}, we obtain a complete characterization of when the Newton polygon $\NP_q(\rho)$ coincides with $\HP_q(\rho)$. In this paper, we prove the following theorem for $n = 1$. The general case will follow from the $n=1$ case and from our work on $\Z_p$-towers in the sequel \cite{Kramer-Miller-Upton2}:

		\begin{theorem}\label{intro theorem: HP is NP with congruence conditions}
		$\NP_q(\rho) = \HP_q(\rho)$ if and only if
		\begin{enumerate}
			\item	$\overline{X}$ is ordinary.
			\item	$\delta_P = d_P/p^{n-1} \in \Z$ for all $P \in S$.\label{cond:2}
			\item	$p \equiv 1 \pmod{\delta_P}$ for all $P \in S$.\label{cond:3}
		\end{enumerate}
	\end{theorem}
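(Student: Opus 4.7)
The plan is to combine Theorem~\ref{intro theorem: local touching of polygons givesglobal touching} with the classical ``Newton $=$ Hodge'' criterion for one-variable Artin--Schreier $L$-functions on $\A^1$. Since $\NP_q(\rho)$ and $\HP_q(\rho)$ both start at $(0,0)$, both end at the same point, and satisfy $\NP_q(\rho) \geq \HP_q(\rho)$, the equality $\NP_q(\rho) = \HP_q(\rho)$ is equivalent to $\NP_q^{<r}(\rho)$ and $\HP_q^{<r}(\rho)$ sharing a terminal point for every $r$ that is a slope of $\HP_q(\rho)$. The same reduction works locally for each $\rho_P^\ext$. Applying Theorem~\ref{intro theorem: local touching of polygons givesglobal touching} slope-by-slope then reduces the global equality $\NP_q(\rho)=\HP_q(\rho)$ (assuming $\overline{X}$ is ordinary) to the local equalities $\NP_q(\rho_P^\ext)=\HP_q(\rho_P^\ext)$ for every $P \in S$.

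For the $(\Leftarrow)$ direction, condition (2) is automatic when $n=1$ (since $\delta_P = d_P$), so it suffices to verify the local equality under condition (3). For $n=1$ the character $\rho_P^\ext$ of $\pi_1(\A_{\F_q}^1)$ has order $p$ and Swan conductor $d_P$, so by Artin--Schreier theory it corresponds to a cover $y^p - y = f(x)$ with $\deg(f)=d_P$ prime to $p$. The $L$-function $L(\rho_P^\ext,s)$ then coincides (up to a trivial factor) with the one-variable exponential sum attached to $f$, and the classical theorem of Adolphson--Sperber, sharpened by Wan, asserts that its Newton and Hodge polygons coincide if and only if $p \equiv 1 \pmod{d_P}$. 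Given condition (3), the local equality holds for each $P$, and the local-to-global theorem yields $\NP_q(\rho)=\HP_q(\rho)$.

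For the $(\Rightarrow)$ direction, we argue in the contrapositive on each condition. Condition (1) follows directly from the Deuring--Shafarevich argument referenced in the footnote: if $\overline{X}$ is non-ordinary then $\NP_q(\rho)$ lies strictly above $\HP_q(\rho)$. Condition (2) is automatic for $n=1$. For condition (3), $\NP_q(\rho)=\HP_q(\rho)$ forces equality at every break point, so Theorem~\ref{intro theorem: local touching of polygons givesglobal touching} gives $\NP_q(\rho_P^\ext)=\HP_q(\rho_P^\ext)$ for each $P$. Invoking the sharp Adolphson--Sperber--Wan criterion in the other direction then forces $p \equiv 1 \pmod{d_P}$.

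The main obstacle is the identification of $L(\rho_P^\ext,s)$ with a classical Artin--Schreier exponential sum in a way that lets us apply the Adolphson--Sperber--Wan criterion on the nose (matching the normalization of Swan conductors against the degree of the Laurent/polynomial data, and verifying that the ``local Hodge polygon'' defined in \S\ref{ss:mainresults} is the same as the Hodge polygon appearing in that classical theory). Once this dictionary is in place, the proof for $n=1$ is essentially an application of Theorem~\ref{intro theorem: local touching of polygons givesglobal touching} together with Deuring--Shafarevich. The extension to general $n$ is explicitly deferred to \cite{Kramer-Miller-Upton2}, where $\Z_p$-tower techniques reduce the order-$p^n$ case to the order-$p$ case handled here.
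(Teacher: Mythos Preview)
Your approach is essentially the same as the paper's: reduce the global equality to the local equalities $\NP_q(\rho_P^\ext)=\HP_q(\rho_P^\ext)$ via Theorem~\ref{intro theorem: local touching of polygons givesglobal touching}, then invoke the classical one-variable criterion for Artin--Schreier $L$-functions. The paper cites Robba \cite{Robba-lower_bounds_NP} (and Blache--F\'erard) for the ``if'' direction of the local statement rather than Adolphson--Sperber--Wan, but this is only a difference of attribution.

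There is one point where your argument is looser than the paper's. For the ``only if'' direction you write that one ``invokes the sharp Adolphson--Sperber--Wan criterion in the other direction.'' Those references do not state an if-and-only-if for a \emph{specific} exponential sum; they give a sufficient condition for equality. The paper (Remark~\ref{r: only if remark}) instead argues directly: once Theorem~\ref{intro theorem: local touching of polygons givesglobal touching} forces $\NP_q(\rho_P^\ext)=\HP_q(\rho_P^\ext)$, the vertex $(1,1/d_P)$ lies on $\NP_q(\rho_P^\ext)$, but the $y$-coordinates of vertices of this Newton polygon lie in $\tfrac{1}{p^{n-1}(p-1)}\Z$ because the $L$-function has coefficients in $\Z_p[\zeta_{p^n}]$. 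This denominator constraint gives $d_P \mid p^{n-1}(p-1)$, and a short ramification-filtration argument then yields $p^{n-1}\mid d_P$, hence conditions (2) and (3). Your conclusion is correct (indeed, in the one-variable order-$p$ case the denominator argument shows that $\NP=\HP$ genuinely forces $d_P\mid p-1$), but you should replace the appeal to a nonstandard ``converse of Adolphson--Sperber--Wan'' with this elementary divisibility argument.
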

	\begin{remark} \label{r: only if remark}
		The ``only if'' part of Theorem \ref{intro theorem: HP is NP with congruence conditions} can be deduced from
		Theorem \ref{intro theorem: local touching of polygons givesglobal touching} for all $n$. 
		Indeed, assume $\NP_q(\rho)=\HP_q(\rho)$. Then by the Deuring-Shafarevich formula we know that $\overline{X}$ is ordinary.
		From Theorem \ref{intro theorem: local touching of polygons givesglobal touching} we know that
		$\HP_q(\rho_P^\ext)=\NP_q(\rho_P^\ext)$ for each $P \in S$. In particular, we know that 
		$\NP_q(\rho_P^\ext)$ contains the vertex $(1,\frac{1}{d_P})$. However, the vertices of $\NP_q(\rho_P^\ext)$
		are contained in $\frac{1}{p^{n-1}(p-1)}\Z$. This implies $d_P|p^{n-1}(p-1)$.
		To prove \ref{cond:2} and \ref{cond:3} it suffices to show $p^{n-1}|d_P$.
		In particular, it is enough to prove $d_P \geq p^{n-1}$. 
		Let $F_{P,n}/F_P$ be the finite extension corresponding to $\ker(\rho_P)$, so that
		$\Gal(F_{P,n}/F_P)=\Z/p^n\Z$. We then let $F_{P,i}\subset F_{P,n}$ be the field corresponding to
		$p^{n-i}\Z/p^n\Z$. Let $d_{P,i}$ be the largest upper numbering ramification break
		of $\Gal(F_{P,i}/F_P)$ (so that $d_{P,n}$ is $d_P$). By class field theory we know that
		$d_{P,i+1}\geq pd_{P,i}$ (see e.g. \cite[Lemma 5.2]{Kramer-Miller2} for more details), which
		implies $d_{P,n}\geq p^{n-1}$.

	\end{remark}

	\begin{remark}
		We may apply Theorem \ref{intro theorem: HP is NP with congruence conditions} to determine the Newton polygon of certain covers of an ordinary curve $\overline{X}$. Namely, let $X' \to X$ be a finite Galois $\Z/p^n\Z$-cover, which is totally ramified at each $P\in S$. We will assume that for each $P$ the largest ramification break (in upper numbering) of $X'/X$ at $P$ is of the form $\delta_P p^{n-1}$, where $\delta_P \in \Z$ and $p\equiv 1 \mod \delta_P$. The zeta function of $\overline{X}'$ has a product decomposition
		\begin{align} \label{intro eq: decomposing Zeta functions}
			Z(\overline{X}',s) = Z(\overline{X},s) \prod_{\rho} L(\rho,s),
		\end{align}
		where the product is over all nontrivial characters $\rho$ of $\Gal(\overline{X}/X)$. If $\rho$ has order $p^j$, then the Swan conductor of $\rho$ at $P$ is exactly $\delta_Pp^{j-1}$. In particular, we may apply Theorem \ref{intro theorem: HP is NP with congruence conditions} to each $L$-function in \eqref{intro eq: decomposing Zeta functions} to completely determine the Newton polygon of $\overline{X}'$.
		
		In the case of an Artin-Schreier cover, we are able to recover a result of Booher and Pries (see \cite[Corollary 4.3]{Booher-Pries}). In fact, we obtain a much stronger result: In \cite{Booher-Pries}, Booher and Pries show
		that there \emph{exists} an Artin-Schreier cover $X' \to X$ such that the $q$-adic Newton polygon of $\overline{X}'$ has slope set
		\begin{equation*} 
			\big 
			\{\underbrace{0,\dots,0}_{\mathclap{pg+(p-1)(|S|-1)}}
			\big \}~\sqcup~ \big 
			\{\underbrace{1,\dots,1}_{\mathclap{pg+(p-1)(|S|-1)}}	
			\big \}
			\sqcup \bigsqcup_{P \in S} \left\{\frac{1}{d_P}, \dots, \frac{d_P - 1}{d_P}\right\}^{\mathrlap{\times (p-1)}},
		\end{equation*}
		where the $\times n$ superscript means each slope appears with multiplicity $n$. By Grothendeick's specialization theorem, this 
		determines the Newton polygon for a generic $\Z/p\Z$-cover of $X$ with these ramification invariants.
		However, by the above paragraph we see that this generic Newton polygon occurs for
		\emph{every} Artin-Schreier cover $X' \to X$ with the same ramification breaks.
	\end{remark}

\subsection{Outline of proof}

The starting point of our proof is the method developed by the first author in \cite{KramerMiller} and \cite{Kramer-Miller3}.
However, the methods in this earlier work only produce lower bounds on $\NP_q(\rho)$, and are woefully inadequate 
for pinpointing exact vertices. The necessary new ingredient is a perturbation theory for $p$-adic operators, which allows us to relate the Newton polygons $\NP_q(\rho)$ and $\NP_q(\rho_P^\ext)$.

\paragraph{Characters and Liftings}

Our general approach is to study $L(\rho,s)$ via the Monsky trace formula, which expresses $L(\rho,s)$ as a ratio of Fredholm determinants of operators. This requires
us to lift the Frobenius on $U$ to characteristic $0$. We use the lifting construction developed by the first author in \cite{KramerMiller}. Choose a tame Belyi map $\eta:\overline{X} \to \PP_{\F_q}^1$ for which $\eta(P) = 0$ for all $P \in S$. Let $U \subseteq X$ denote the \'etale locus of $\eta$. Let $A$ be a smooth lifting of the coordinate ring of $U$ to characteristic $0$, and write $A^\dagger$ for the $p$-adic weak completion of $A$ (see \S \ref{ss:growth}). Using $\eta$ we construct a lifting of Frobenius $\sigma:A^\dagger \to A^\dagger$ whose local expression at each $P \in S$ is particularly simple, see \S\ref{ss:semilocal}.

Let $\rho_U$ denote the restriction of $\rho$ to $\pi_1(U)$. Let $\pi \in \Z_p[\rho]$ be a uniformizer, and consider the base change
\begin{equation*}
	A_\pi^\dagger	=	\Z_p[\rho]	\otimes_{\Z_p}	A^\dagger.
\end{equation*}
In this setting, the Monsky trace formula states that there is a $\Z_q[\rho]$-linear endomorpshism $\Theta_q:A_\pi^\dagger \to A_\pi^\dagger$ for which
\begin{align*}
	L(\rho_U,s) &= \frac{\det(1-s\Theta_q | A_\pi^\dagger[\tfrac{1}{\pi}])}{\det(1-qs \Theta_q | A_\pi^\dagger[\tfrac{1}{\pi}])}.
\end{align*}

\paragraph{The Space $A_\pi^\dagger$}

It follows from the trace formula that the truncated Newton polygon $\NP_q^{<1}(\rho_U)$ agrees with that of the characteristic series $\det(1-s\Theta_q | A_\pi^\dagger[\tfrac{1}{\pi}])$. Thus, our goal in this paper is to estimate the Newton polygon of this series. In the classical situation $X = \mathbb{G}_m$, the ring $A^\dagger$ may be identified with the $p$-adic weak completion of the ring of Laurent polynomials $\Z_q[t^\pm]$. The $\Z_q$-module structure of $A^\dagger$ is quite explicit, as the elements $t^k$ for $k \in \Z$ form a natural topological basis. Choosing a suitable basis for the lifting $A^\dagger$ is a key point of the papers \cite{KramerMiller} and \cite{Kramer-Miller3}, where a Riemann-Roch argument is used to construct a basis with prescribed poles at each branch point $P$ of $\eta$.

For each such $P$, choose a local parameter $t_P$ at $P$ and consider the ring $\calA_P = \Z_q(\!(t_P)\!)$. This ring is a lifting of the local field $F_P$ to characteristic $0$, and there is a unique map $A^\dagger \to \calA_P^\dagger$ given by ``expanding functions in the parameter $t_P$''. The Riemann-Roch theorem implies that there is an exact sequence of $\Z_q$-modules
\begin{equation*}
	0	\to	L	\to	A^\dagger	\to	\bigoplus_P \calA_P^{\dagger,\tr}	\to	0,
\end{equation*}
where $L$ is a free $\Z_q$-module of finite rank, and $\calA_P^{\dagger,\tr}$ is a space of truncated Laurent series in $\calA_{\pi,P}$ with prescribed poles. The space $\calA_P^{\dagger,\tr}$ admits a topological basis of the form $B_P = \{t_P^{-k}: k > \mu(P)\}$, where $\mu(P)$ is a parameter depending on the branching of the map $\eta$ at $P$. Is is then straightforward to ``lift'' these bases to produce a global basis $B$ for $A^\dagger$.
%

\paragraph{Perturbing $\pi$-adic Operators}

Let $P$ be a branch point of $\eta$, and let $\calA_{\pi,P} = \Z_q \otimes_{\Z_p} \calA_P$. The operator $\Theta_q$ extends in a natural way to the $\pi$-adic weak completion $\calA_{\pi,P}^\dagger$. If $P \in S$, then the truncated space $\calA_{\pi,P}^{\dagger,\tr}$ is invariant with respect to the action of $\Theta_q$\footnote{Strictly speaking, this is only true after replacing $\Theta_q$ with a suitable ``twist.'' This is a technical point which we ignore in this introduction.}, and we may therefore speak of the local Fredholm determinant
\begin{equation*}
	\det(I-s \Theta_q|\calA_{\pi,P}^{\dagger,\tr}[\tfrac{1}{\pi}])  \in 1+s\Z_q[\rho]\llbracket s\rrbracket.
\end{equation*}
In \S \ref{ss:localtoglobal}, we relate the Newton polygon of this series to that of the local extension $\rho_P^\ext$. In particular, we are able to give a lower bound for this Newton polygon in terms of the local Hodge polygon $\HP_q(\rho_P^\ext)$.

The key insight of this paper is that the Newton polygon of $\Theta_q$ acting on $A_\pi^\dagger$ should be closely related to the local Newton polygons at the ramified points $P \in S$. Let $\Psi = \Theta_q|B$ denote the matrix of $\Theta_q$ with respect to our chosen global basis. In \S \ref{ss:global touching} we construct a new matrix $\Psi'$ in terms of the local matrices $\Theta_q|B_P$ at each $P \in S$. To quantify the close relationship between $\Psi$ and $\Psi'$, we introduce in \S \ref{ss:perturbation} the notion of a \emph{$\pi$-adic $r$-perturbation}. The $r$-perturbation condition implies the following relation between the Newton and Hodge polygons of both matrices:
\begin{align*}
	\begin{array}{l}
	\text{Touching between the $\pi$-adic Hodge} \\
	\text{and Newton polygons of $\Psi$ for}\\
	\text{slopes smaller than $r$}
	\end{array} &\iff \begin{array}{l}
	\text{Touching between the $\pi$-adic Hodge} \\
	\text{and Newton polygons of $\Psi'$ for}\\
	\text{slopes smaller than $r$}
	\end{array} 
\end{align*}
In \S \ref{ss:global touching} we show using the estimates of Section \ref{s:estimates} that $\Psi'$ is an $r$-perturbation of $\Psi$ for appropriate $r$. This allows us to deduce Theorem \ref{intro theorem: local touching of polygons givesglobal touching}.

		\subsection{Applications to \texorpdfstring{$\Z_p$}{Zp}-towers of curves}
		Our primary motivation for this article is to study the $p$-adic variation of $L$-functions
		along $\Z_p$-towers of curves. A \emph{$\Z_p$-tower} $X_\infty/X$ is a sequence of finite Galois coverings
		\begin{equation*}
		\cdots 	\to X_2	\to X_1 \to X_0 =  X,
		\end{equation*}
		together with compatible identifications $\Gal(X_n/X) \cong \Z/p^n \Z$ for all $n$. Equivalently, a $\Z_p$-tower over $X$ corresponds to a continuous and surjective map $\rho:\pi_1(X) \to \Z_p$. We will typically regard $X_\infty/X$ as a family of characters of $\pi_1(X)$ as follows: For every continuous $p$-adic character $\chi \in \Hom(\Z_p,\C_p^\times)$, we obtain a composite character
		\begin{equation*}
		\rho_\chi = \chi \circ \rho:\pi_1(X)	\to	\C_p^\times.
		\end{equation*}
		Daqing Wan has devised a program (see \cite{Wan5}) to understand how the Newton polygon $\NP_q(\rho_\chi)$ varies as a function of $\chi$. The general philosophy is that for well behaved towers we expect
		the Newton polygons to exhibit a certain degree of regularity as $\chi$ varies. Recently, there has been much progress
		surrounding the case $X=\mathbb{A}^1$ (see e.g. \cite{Davis} for one of the first articles on the topic or \cite{KZ} for results on a general class of towers). However, essentially nothing is known for higher genus curves
		or when multiple points along the tower are ramified. The local-to-global results of this article allow us to
		approach these questions when the base curve is ordinary. This is the content of the sequel articles \cite{Kramer-Miller-Upton2} and \cite{Upton2}.
		
		\subsubsection{\texorpdfstring{$\Z_p$}{Zp}-Towers with Strictly Stable Monodromy}\label{ss:stable}
		
		Let $\bm\delta = (\delta_P)_{P \in S}$ be a tuple of positive rational numbers in $\Z[\tfrac{1}{p}]$. We say that the tower $X_\infty/X$ has \emph{$\bm\delta$-stable monodromy} if for each $P \in S$, the highest ramification break of $X_n/X$ at $P$ is of the form $p^{n-1} \delta_P$ for all $n \gg 0$. 
		
		\begin{theorem}[\cite{Kramer-Miller-Upton2}]\label{t:zptowers}
			Suppose that $\overline{X}$ is ordinary. Let $X_\infty/X$ be a $\Z_p$-tower of curves with $\bm\delta$-stable monodromy. Then for a finite character $\chi:\Z_p \to \C_p^\times$ of order $p^n$:
			\begin{enumerate}
				\item	\label{i:uniformity}(Slope Uniformity) The $q$-adic Newton slopes of $\rho_\chi$ are equidistributed in the interval $[0,1]$ as $n \to \infty$.
				\item	\label{i:stability}(Slope Stability) Suppose that $\delta_P \in \Z$ and that $\delta_P \equiv 1 \pmod{p}$ for all $P \in S$. Then $\NP_q(\rho_\chi)$ has slope set
				\begin{equation*}
				\{ \underbrace{0,...,0}_{g-1+|S|}	\}	\sqcup	\{ \underbrace{1,...,1}_{g-1+|S|}	\}	\sqcup	\bigsqcup_{P \in S}	\left\{	\frac{1}{\delta_P p^{n-1}}, \dots \frac{\delta_P p^{n-1} -1}{\delta_P p^{n-1}}	\right\}.
				\end{equation*}
				In particular, the invariants $\delta_1,...,\delta_P$ completely determine the Newton polygon of each curve $X_n$. 
			\end{enumerate}
		\end{theorem}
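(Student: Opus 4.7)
The plan is to deduce Theorem \ref{t:zptowers} from Theorem \ref{intro theorem: local touching of polygons givesglobal touching} via a detailed study of the local Newton polygons $\NP_q(\rho_{\chi,P}^\ext)$, where $\rho_{\chi,P}^\ext$ denotes the Gabber-Katz extension of the localization of $\rho_\chi$ at $P$. Under the $\bm\delta$-stable monodromy hypothesis, $\rho_\chi$ has Swan conductor $\delta_P p^{n-1}$ at each $P \in S$ (for $n$ sufficiently large), so $\HP_q(\rho_\chi)$ has exactly the slope set claimed in part \ref{i:stability}. By Gabber-Katz, each $\rho_{\chi,P}^\ext$ is a character of $\pi_1(\A^1_{\F_q})$, and via Artin-Schreier-Witt theory its local $L$-function is realized as an exponential sum on $\G_m$ amenable to the classical Dwork-Adolphson-Sperber-Wan techniques.

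For Slope Stability, the task reduces to proving $\NP_q(\rho_{\chi,P}^\ext) = \HP_q(\rho_{\chi,P}^\ext)$ for every $P$ under the congruences $\delta_P \in \Z$, $\delta_P \equiv 1 \pmod p$; once these local equalities are in hand, Theorem \ref{intro theorem: local touching of polygons givesglobal touching}, applied at each vertex-slope of the global Hodge polygon, forces $\NP_q(\rho_\chi) = \HP_q(\rho_\chi)$ and the stated slopes can be read off. I would establish the local equality by induction on $n$. The base case $n = 1$ reduces to an exponential sum on $\G_m$ attached to a polynomial of degree $\delta_P$ and is handled by Wan's ordinariness criterion \cite{Wan4} under the stated congruences. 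The inductive step would realize the level-$n$ local Frobenius as a twist of the level-$(n{-}1)$ Frobenius using the explicit shape of the Artin-Schreier-Witt lifting, and track the Newton polygon through this twist; the hypothesis $\delta_P \equiv 1 \pmod p$ is precisely what makes the relevant Hasse-type invariants at each level nonvanishing.

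For Slope Uniformity, the first observation is that the slope multiset of $\HP_q(\rho_\chi)$, which contains of order $|S| \bar\delta \, p^{n-1}$ fractions in $[0,1]$, is automatically equidistributed in $[0,1]$ as $n \to \infty$. It therefore suffices to show that the defect between $\NP_q(\rho_\chi)$ and $\HP_q(\rho_\chi)$ is $o(p^n)$ in an appropriate vertex-counting sense. Classical estimates for exponential sums on $\G_m$ (of Adolphson-Sperber type, refined by Wan) produce a growing set of vertices of $\NP_q(\rho_{\chi,P}^\ext)$ lying on $\HP_q(\rho_{\chi,P}^\ext)$ as the local Swan conductor grows, and Theorem \ref{intro theorem: local touching of polygons givesglobal touching}, applied at each such vertex-slope, transfers these to correspondingly many shared vertices of $\NP_q(\rho_\chi)$ and $\HP_q(\rho_\chi)$ at the global level. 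Equidistribution of the global Newton slopes then follows by a routine weak-convergence argument, without requiring the stronger congruences of part \ref{i:stability}.

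The main obstacle is the inductive step for Slope Stability: controlling the local Newton polygon of a large-order Artin-Schreier-Witt character goes well beyond the $n = 1$ regime handled in the current paper, and demands new local input on the Frobenius matrix of a Witt-vector exponential sum on $\G_m$. This is precisely the technical work carried out in the sequel \cite{Kramer-Miller-Upton2}; once that input is available, the local-to-global mechanism developed here packages it into the global statement.
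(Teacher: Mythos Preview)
This theorem is not proved in the present paper; it is stated with a citation to the sequel \cite{Kramer-Miller-Upton2}, and the surrounding text in \S\ref{ss:stable} makes clear that the proof belongs there. You recognize this yourself in your final paragraph. So there is no ``paper's own proof'' to compare against here, and your proposal is best read as a plausible reconstruction of how the sequel combines the local-to-global machinery of this paper (Theorem \ref{intro theorem: local touching of polygons givesglobal touching}, or more precisely Theorem \ref{t:main}) with new local input on $\G_m$.

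That said, one step in your sketch does not go through as written. For the base case $n=1$ of Slope Stability you invoke Wan's ordinariness criterion from \cite{Wan4}, but that criterion requires $p \equiv 1 \pmod{\delta_P}$, whereas the hypothesis in Theorem \ref{t:zptowers}(\ref{i:stability}) is $\delta_P \equiv 1 \pmod{p}$. These are genuinely different congruences---compare with Theorem \ref{intro theorem: HP is NP with congruence conditions}, where the former condition appears---and for $\delta_P > 1$ neither implies the other. Consequently the local equality $\NP_q(\rho_{\chi,P}^\ext) = \HP_q(\rho_{\chi,P}^\ext)$ cannot be obtained by treating each level as an isolated one-variable exponential sum and quoting the classical criterion. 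The actual local analysis in \cite{Kramer-Miller-Upton2}, generalizing Kosters--Zhu \cite{KZ} from the $\A^1$ case, must instead exploit the structure of the full Artin--Schreier--Witt tower (in particular the way the ramification filtration interacts with the $\bm\delta$-stability hypothesis) rather than reducing to level $1$. Your overall architecture---reduce to local via Theorem \ref{intro theorem: local touching of polygons givesglobal touching}, then supply local input over $\G_m$---is the right one; it is the specific local input you name that needs to be replaced.
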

		This theorem generalizes work of Kosters-Zhu (see \cite{KZ}) for $\bm\delta$-stable towers over $\mathbb{A}_{\F_q}^1$. 
		To the best of our knowledge, there were no prior examples of slope uniform or slope stable towers beyond the $\mathbb{A}_{\F_q}^1$ case. Using the methods of \cite{Kramer-Miller3} we obtain similar theorems for $\bm\delta$-stable towers twisted by a tame finite character $\psi:\pi_1(X) \to \Z_p^\times$.
		
		\subsubsection{Overconvergent \texorpdfstring{$\Z_p$}{Zp}-Towers}\label{ss:oc}
		
			Any tower $X_\infty/X$ is determined by an Artin-Schreier-Witt equation $F(x)-x=f$, where $f$ is an element of the $p$-typical Witt vectors of the coordinate ring of $X$. We say that $X_\infty/X$ is \emph{overconvergent} if $f$ may be taken to be an overconvergent Witt vector in the sense of \cite{Davis-Langer-Zink}. This is a natural condition on the growth of the poles of the Witt coordinates of $f$, closely related to the notion of overconvergence found in rigid cohomology. We remark that any overconvergent tower is $\bm\delta$-stable for some $\bm\delta$, but that most $\bm\delta$-stable towers are not overconvergent.
		
		A notable class of examples are the towers over $X = \A_{\F_q}^1$ studied in work of Davis-Wan-Xiao \cite{Davis}. Those authors prove that a special class of towers are slope-uniform and slope-stable. In fact, they prove a beautiful \emph{spectral halo theorem} which describes the $p$-adic variation of the zero locus of $L(\rho_\chi,s)$ as $\chi$ varies through the character space $\Hom(\Z_p,\C_p^\times)$.  In the final part of this series of articles \cite{Upton2}, the second author shows that the spectral halo theorem holds for a general overconvergent tower $X_\infty/\A_{\F_q}^1$. By combining this result with our local-to-global theorems, we obtain the following stability result for overconvergent towers over an ordinary curve:
		
		\begin{theorem}
			Suppose that $X$ is ordinary. Let $X_\infty/X$ be an overconvergent $\Z_p$-tower of curves. There exist a non-negative integer $m_0$ and postivie rational numbers $\alpha_1,...,\alpha_d$ with the following property: For every $\chi$ of order $p^n$ with $n > m_0$, the slope set of $\NP_q(\rho_\chi)$ is
			\begin{equation*}
			\{ \underbrace{0,...,0}_{g-1}	\}	\sqcup	\{ \underbrace{1,...,1}_{g-1+|S|}	\}	\sqcup	\bigsqcup_{P \in S}\bigsqcup_{i = 0}^{p^{n-m_0}-1}	\left\{	\frac{i}{p^{n-m_0-1}},\frac{\alpha_1+i}{p^{n-m_0-1}},...,\frac{\alpha_d+i}{p^{n-m_0-1}}	\right\}.
			\end{equation*}
		\end{theorem}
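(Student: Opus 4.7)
The plan is to combine the local-to-global theorem of this paper---in the general $\bm\delta$-Hodge form promised in Remark \ref{r: only if remark}, namely Theorem \ref{t:main}---with the spectral halo theorem for overconvergent $\Z_p$-towers of $\A^1_{\F_q}$ established in the sequel \cite{Upton2}. The strategy is to reduce the global problem over $X$ to parallel local problems at each ramified point, and then to apply the known halo statement to each local extension.

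For each $P \in S$ I would first verify that the localization of $X_\infty/X$ at $P$ gives, via Katz-Gabber, an overconvergent $\Z_p$-tower of $\A^1_{\F_q}$. This requires translating the overconvergence condition on the global Witt vector $f$ defining $X_\infty/X$ into overconvergence of its Laurent expansion at $P$, and then checking that the resulting local tower admits an overconvergent global model. This is a compatibility between weak completions and the local--global decomposition already used elsewhere in the paper. I would then apply the spectral halo theorem of \cite{Upton2} to each such local extension. For each $P$, this supplies a non-negative integer $m_P$ and positive rationals $\alpha_1^{(P)}, \dots, \alpha_{d_P}^{(P)}$ such that, for every $\chi$ of order $p^n$ with $n > m_P$, the slope set of $\NP_q(\rho_{P,\chi}^\ext)$ is exactly the expected local packet
\begin{equation*}
\bigsqcup_{i=0}^{p^{n-m_P}-1} \left\{ \frac{i}{p^{n-m_P-1}},\, \frac{\alpha_1^{(P)}+i}{p^{n-m_P-1}},\, \dots,\, \frac{\alpha_{d_P}^{(P)}+i}{p^{n-m_P-1}} \right\}.
\end{equation*}
In particular, the local Newton polygon agrees on the nose with the ``generalized Hodge polygon'' determined by $(m_P,\alpha_j^{(P)})$. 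Setting $m_0 := \max_P m_P$ and absorbing the $\alpha_j^{(P)}$'s into a single list $\alpha_1, \dots, \alpha_d$ (with multiplicities) yields the uniformity in $P$ that the statement requires.

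I would then feed this local equality into the general local-to-global Theorem \ref{t:main}. Its hypothesis---touching between the local Newton and local generalized Hodge polygons at every vertex, for every $P \in S$---is verified by the previous step, so the conclusion forces the global Newton polygon to coincide with the corresponding global generalized Hodge polygon. Concatenating these local slope packets with the $g-1$ slopes of $0$ and $g-1+|S|$ slopes of $1$ supplied by the ordinariness of $\overline{X}$---the asymmetry arising because each local packet already contributes a slope $0$ coming from the $i=0$ term, whereas the top slope $1$ is never hit by the local halo---produces exactly the stated slope set for $\NP_q(\rho_\chi)$.

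The main obstacle is the first step: showing that the Katz-Gabber extension of the localization of an overconvergent global tower remains overconvergent as a tower of $\A^1_{\F_q}$. This ties the Witt-vector description of the tower to the analytic structure on $F_P$, and it is precisely what makes the spectral halo of \cite{Upton2} applicable in each local factor. Once this is in place, both the halo input and the $\bm\delta$-Hodge form of the local-to-global theorem operate as black boxes, and the rest of the argument is essentially bookkeeping.
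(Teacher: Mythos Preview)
The paper does not actually prove this theorem here; it is announced in \S\ref{ss:oc} as a consequence of combining the local-to-global machinery of this article with the spectral halo theorem for overconvergent towers over $\A^1_{\F_q}$, the latter being the content of the sequel \cite{Upton2}. Your proposal correctly reconstructs that intended strategy---localize via Katz--Gabber, apply the halo to each $\rho_{P,\chi}^\ext$, and then invoke the local-to-global comparison---so at the level of the sketch the paper provides, your outline is aligned with the argument.

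Two corrections are worth making. First, your pointer to Remark~\ref{r: only if remark} is misplaced: that remark concerns the ``only if'' direction of Theorem~\ref{intro theorem: HP is NP with congruence conditions}, not the general $\bm\delta$-Hodge form of the local-to-global result. The statement you want is Theorem~\ref{t:main} (and Corollary~\ref{c:delta-hodge result}) directly. Second, Theorem~\ref{t:main} requires $\rho_\chi$ to be $\pi_\chi$-adically $\bm\delta$-Hodge, not merely $\bm\delta$-overconvergent; the paper explicitly defers this verification to the sequels (see Example~\ref{ex:deltaOverconvergent}(3) and the sentence following Theorem~\ref{t:finite are delta-good}). That is a second black-box input from \cite{Upton2} you should name alongside the halo theorem. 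Finally, note that Theorem~\ref{t:main} as stated compares the global Newton polygon to the $\bm\delta$-Hodge polygon $\HP(\delta_P)$, not to a ``generalized Hodge polygon'' built from the $\alpha_j$; the passage from periodic touching with $\HP(\delta_P)$ to the full slope description you want requires applying the comparison at every vertex furnished by the halo's periodicity, which is a step your sketch elides.
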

		
		As a byproduct of our approach, we obtain similar estimates for the Newton polygon $\NP_q^{< 1}(\rho_\chi)$ for \emph{any} $\chi$ (finite or infinite) near the boundary of the character space $\Hom(\Z_p,\C_p^\times)$. This may be regarded as a ``small-slope'' analogue of the spectral halo theorem of Davis-Wan-Xiao. Unfortunately, our perturbation approach does not produce any information regarding the zeroes of $L(\rho_\chi,s)$ outside of the disk $v_p(s) > - v_p(q)$. It seems likely that new techniques are necessary to study zeros in this region.
		
\subsection{Acknowledgments} 
	We would like to thank Daqing Wan for sharing his insight on exponential sums and towers.
	In addition, we acknowledge Kiran Kedlaya for discussing this work with us. 

\section{Growth Conditions}

	Throughout this article, our basic coefficient ring will be a complete discrete valuation ring $R$ with maximal ideal $\frakm$ and residue field $\F_p$ (we do not assume that $R$ has characteristic $0$). We fix a non-zero topologically nilpotent element $\pi \in R$. We will frequently work with algebras of functions over $R$, which are $\pi$-adically overconvergent. In this section we spell out our conventions regarding these ``growth conditions'' and introduce some basic constructions.
	
	\subsection{Weak Completions}\label{ss:growth}
	
		Let $A$ be an $R$-algebra equipped with the $\pi$-adic topology. We will write $A^\infty$ for the $\pi$-adic completion of $A$. Given real numbers $m$, $b$ with $m > 0$, let $A^m(b)$ denote the set of $a \in A^\infty$ with the following property: there exist $a_1,...,a_n \in A$ and polynomials $p_j \in \pi^jR[X_1,...,X_n]$ such that
		\begin{equation}\label{eq:wc}
			a = \sum_j p_j(a_1,...,a_n)
		\end{equation}
		and $\deg(p_j) < mj + b$. Then each $A^m(b)$ is a $\pi$-adically complete submodule of $A^\infty$. The following basic properties are immediate: 
		
		\begin{lemma}\label{l:Amb}
			For each $m > 0$,
			\begin{enumerate}
				\item	$A^m(b_1) \cdot A^m(b_2) \subseteq A^m(b_1+b_2)$.
				\item	$\pi^k A^m(b) \subseteq A^m(b-km)$.
			\end{enumerate}
		\end{lemma}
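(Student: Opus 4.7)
The proof plan is a direct verification from the definition in \eqref{eq:wc}. Both claims reduce to bookkeeping about how the presentation $a = \sum_j p_j(a_1,\dots,a_n)$ transforms under multiplication and scaling by $\pi^k$. I expect no real obstacle; the only care required is (a) ensuring that the rearranged series still converges $\pi$-adically in $A^\infty$ and (b) correctly handling the degree estimate after reindexing.

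For part (1), I would fix presentations $a = \sum_j p_j(a_1,\dots,a_n)$ and $a' = \sum_k q_k(a_1',\dots,a_{n'}')$ witnessing $a \in A^m(b_1)$ and $a' \in A^m(b_2)$. I then combine the generators into a single tuple $(a_1,\dots,a_n,a_1',\dots,a_{n'}')$ and define, for each $\ell \geq 0$,
\begin{equation*}
r_\ell \;=\; \sum_{j+k=\ell} p_j \cdot q_k \;\in\; \pi^\ell R[X_1,\dots,X_n,X_1',\dots,X_{n'}'].
\end{equation*}
The degree estimate is immediate: $\deg(p_j q_k) \leq \deg p_j + \deg q_k < m(j+k) + (b_1+b_2) = m\ell + (b_1+b_2)$, so $\deg(r_\ell) < m\ell + (b_1+b_2)$. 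Convergence in $A^\infty$ follows because $r_\ell(a_1,\dots,a_{n'}') \in \pi^\ell A^\infty$. Recognizing $aa' = \sum_\ell r_\ell(\dots)$ then places $aa'$ in $A^m(b_1+b_2)$.

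For part (2), given $a = \sum_j p_j(a_1,\dots,a_n)$ with $p_j \in \pi^j R[X_1,\dots,X_n]$ and $\deg p_j < mj+b$, I would define new polynomials $p'_\ell = \pi^k p_{\ell-k}$ for $\ell \geq k$ and $p'_\ell = 0$ for $\ell < k$. Each $p'_\ell$ lies in $\pi^\ell R[X_1,\dots,X_n]$ by construction, and for $\ell \geq k$ one has $\deg(p'_\ell) = \deg(p_{\ell-k}) < m(\ell-k)+b = m\ell + (b-km)$, while for $\ell < k$ the polynomial is zero and the bound is vacuous. Then $\pi^k a = \sum_\ell p'_\ell(a_1,\dots,a_n)$ (up to a shift in the index of summation, which does not affect the sum), exhibiting $\pi^k a \in A^m(b-km)$.

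In both arguments, the only subtlety worth flagging is that the definition of $A^m(b)$ permits arbitrary tuples of generators $a_i$ and arbitrary $n$, so there is no issue in enlarging the generating set (as needed in part (1)) or in allowing zero contributions at small indices (as in part (2)). Everything else is formal manipulation of finite sums of monomials.
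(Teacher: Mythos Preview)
Your proposal is correct and matches the paper's approach: the paper gives no proof at all, calling the lemma ``immediate,'' and what you have written is precisely the direct bookkeeping from the definition that justifies this. There is nothing to add.
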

		
		\begin{definition}
			The \emph{weak completion} $A^\dagger$ of $A$ is the union of the submodules $A^m(b)$.
		\end{definition}
		
		It is clear from Lemma \ref{l:Amb} that $A^\dagger$ is a subring of $A^\infty$ and is independent of the choice of $\pi \in R$. By \cite[Theorem 1.4]{MW}, the natural map $A/\pi A \to A^\dagger/\pi A^\dagger$ is an isomorphism. Let us say that a subset $S \subseteq A^\dagger$ is a set of \emph{weak generators} for $A^\dagger$ if every $a \in A^\dagger$ has the form (\ref{eq:wc}), where the $a_i$ are elements of $S$. For our purposes we will only consider weak completions in two situations:
		\begin{enumerate}
			\item	Let $A$ be an $R$-algebra such that $A^\dagger$ admits a finite set of weak generators. In this case a theorem of Fulton \cite{Fulton} guarantees that $A^\dagger$ is Noetherian.
			\item	Let $A = R_q(\! (t)\! )$ be the ring of formal Laurent series over $R_q$. Then
			\begin{equation*}
				A^\dagger	\cong	(R_q\llbracket t \rrbracket \otimes_R R[X]^\dagger)/(tX-1).
			\end{equation*}
			This is once again a Noetherian ring. In this case we have:
			\begin{equation*}
				A^m(b)	=	\left\{	\sum_{k = -\infty}^\infty a_k t^{-k} : v_\pi(a_k) \geq \frac{k-b}{m} \text{ for }k>0	\right\}.
			\end{equation*}
		\end{enumerate}
		
	\subsection{Weak Base Change}
		
		A typical situation for us will be as follows: Let $A$ be a $\Z_p$-algebra, equipped with the $p$-adic topology. We define the base change $A_\pi = R \otimes_{\Z_p} A$, which we equip with the $\pi$-adic topology. We will always assume that the modules $A^m(b)$ are given by $p$-adic growth conditions, whereas the $A_\pi^m(b)$ are given by $\pi$-adic growth conditions (hence the choice of subscript).
		
		Since many of our constructions will be given initially over $\Z_p$, it is convenient to have a ``comparison'' between the weak completions $A^\dagger$ and $A_\pi^\dagger$. First, we require a basic result from commutative algebra:
		
		\begin{lemma}\label{l:red}
			Consider an exact sequence of $R$-modules
			\begin{equation}\label{eq:ses}
				0	\to	L	\to	M	\xrightarrow{f}	N.
			\end{equation}
			Let $\overline{f}:\overline{M} \to \overline{M}$ denote the reduction of $f$ modulo $\pi$. Then:
			\begin{enumerate}
				\item	\label{i:surj}If $M$ is $\pi$-adically complete, $N$ is $\pi$-adically separated, and $\overline{f}$ is surjective, then $f$ is surjective.
				\item	\label{i:ker}If $N$ is $\pi$-torsion free and $f$ is surjective, then $\overline{L} = \ker(\overline{f})$.
				\item	\label{i:inj}If $M$ and $N$ are $\pi$-torsion free, $M$ is $\pi$-adically separated, and $\overline{f}$ is injective, then $f$ is injective.
			\end{enumerate}			
		\end{lemma}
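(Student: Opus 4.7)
The plan is to prove the three parts using standard techniques from completion theory and homological algebra. For part (\ref{i:surj}), I would use a Hensel-style successive approximation argument. Given $n \in N$, surjectivity of $\overline{f}$ produces $m_0 \in M$ with $n - f(m_0) \in \pi N$, say $n - f(m_0) = \pi n_1$; applying the same argument to $n_1$ yields $m_1 \in M$ with $n_1 - f(m_1) \in \pi N$; iterating produces a sequence $(m_k)$. Setting $s_k = \sum_{j<k} \pi^j m_j$, one checks by induction that $n - f(s_k) \in \pi^k N$. The $\pi$-adic completeness of $M$ yields $m = \lim_k s_k \in M$. Since $f$ is $R$-linear, hence continuous for the $\pi$-adic topology, $f(m) - f(s_k) \in \pi^k N$, so $n - f(m) \in \pi^k N$ for all $k$. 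The $\pi$-adic separatedness of $N$ then forces $f(m) = n$. Here the asymmetry of the hypotheses is essential: completeness of $M$ produces the limit $m$, while separatedness of $N$ rules out a ``ghost discrepancy'' $f(m) - n \in \bigcap_k \pi^k N$.

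For part (\ref{i:ker}), I would appeal to the right-exactness of $- \otimes_R R/\pi R$ applied to the short exact sequence $0 \to L \to M \to N \to 0$ (available thanks to surjectivity of $f$). This yields an exact sequence
\begin{equation*}
L/\pi L \to M/\pi M \xrightarrow{\overline{f}} N/\pi N \to 0,
\end{equation*}
which already identifies the image of $L/\pi L$ in $M/\pi M$ with $\ker(\overline{f})$. To conclude further that the map $L/\pi L \to M/\pi M$ is injective, so that $L/\pi L$ may be identified with the submodule $\overline{L} \subseteq \overline{M}$, I would compute $\mathrm{Tor}_1^R(R/\pi R, N)$ using the length-one resolution $0 \to R \xrightarrow{\pi} R \to R/\pi R \to 0$; this Tor group is the $\pi$-torsion submodule $N[\pi]$, which vanishes by hypothesis.

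For part (\ref{i:inj}), I would iterate a simple divisibility step. If $m \in \ker(f)$, then $\overline{m} \in \ker(\overline{f}) = 0$, so $m = \pi m_1$ for some $m_1 \in M$. Then $\pi f(m_1) = f(m) = 0$, and $\pi$-torsion freeness of $N$ gives $f(m_1) = 0$; so $m_1 \in \ker(f)$ and we may repeat. Inductively $m \in \bigcap_k \pi^k M$, which is zero by $\pi$-adic separatedness of $M$. The main subtlety across all three parts is the bookkeeping in part (\ref{i:surj}); the other two are essentially formal consequences of Tor-vanishing and a Krull-intersection argument, respectively.
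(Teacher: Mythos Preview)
Your arguments for parts (\ref{i:surj}) and (\ref{i:ker}) are correct and match the paper's proof essentially verbatim: the paper also runs the Hensel-style successive approximation for (\ref{i:surj}), and for (\ref{i:ker}) notes that $N$ $\pi$-torsion free over the DVR $R$ means $N$ is flat, then tensors the short exact sequence with $R/\pi R$ --- the same content as your $\mathrm{Tor}_1$ computation.

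For part (\ref{i:inj}) your approach is genuinely different and in fact cleaner. The paper argues by flatness of both $M$ and $N$: it tensors with $0 \to \pi^{j+1}R \to \pi^j R \to \pi^j R/\pi^{j+1}R \to 0$ to identify $\pi^j M/\pi^{j+1}M \cong (\pi^j R/\pi^{j+1}R)\otimes_{R/\pi R}\overline{M}$, then uses that $\pi^j R/\pi^{j+1}R$ is flat over $R/\pi R$ to conclude that each graded map $\pi^j M/\pi^{j+1}M \to \pi^j N/\pi^{j+1}N$ is injective, whence $\ker(f)\subseteq\bigcap_j \pi^j M$. Your direct divisibility iteration reaches the same conclusion with no flatness machinery at all, and as a bonus it never uses the hypothesis that $M$ is $\pi$-torsion free --- only that $N$ is $\pi$-torsion free, $M$ is $\pi$-adically separated, and $\overline{f}$ is injective. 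So your route is both more elementary and slightly sharper than the paper's.
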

		\begin{proof}
			Let $n \in N$. Since $\overline{f}$ is surjective, there exists $m_0 \in M$ such that $n = f(m_0) + \pi n_1$, for some $n_1 \in N$. Repeating this procedure, we inductively construct $m_i \in M$ such that $n_i = f(m_i) + \pi n_{i+1}$. Since $M$ is $\pi$-adically complete, the sum
			\begin{equation*}
				m = \sum_{i = 0}^\infty m_i \pi^i
			\end{equation*}
			converges in $M$. The difference $f(m)-n$ vanishes because $N$ is $\pi$-adically separated, proving (\ref{i:surj}). Since $R$ is a discrete valuation ring, if $N$ is $\pi$-torsion free then $N$ is flat over $R$. Claim (\ref{i:ker}) then follows by tensoring (\ref{eq:ses}) with $R/\pi R$.
			
			Suppose then that $M$ and $N$ are $\pi$-torsion free, $M$ is $\pi$-adically separated, and that $\overline{f}$ is injective. Since $M$ and $N$ are flat over $R$, by tensoring with the exact sequence
			\begin{equation*}
				0	\to	\pi^{j+1} R	\to	\pi^j R	\to	\pi^j R/\pi^{j+1} R	\to	0,
			\end{equation*}
			we have an identification $\pi^j M/\pi^{j+1} M \cong \pi^j R/\pi^{j+1} R	\otimes_{R/\pi R} \overline{M}$ and similarly for $N$. Since $R$ is a discrete valuation ring, $\pi^j R/\pi^{j+1} R$ is flat over $R/\pi R$ for all $j \geq 0$. By tensoring $\overline{f}$ with $\pi^j R/ \pi^{j+1} R$, and using the above identification, we see that $f$ induces an injective map
			\begin{equation*}
				\pi^j M/\pi^{j+1} M	\to	\pi^j N/\pi^{j+1} N.
			\end{equation*}
			It follows that the kernel of $f$ lies in $\bigcap_j \pi^j M = 0$. Since $M$ is $\pi$-adically separated, this proves (\ref{i:inj}).
		\end{proof}
		
		By base change, the natural map $A \to A^\dagger$ induces a map $A_\pi \to (A^\dagger)_\pi$. Passing to weak completions, we obtain a map
		\begin{equation}\label{eq:wbc}
			A_\pi^\dagger	\to	(A^\dagger)_\pi^\dagger.
		\end{equation}
		Our ``comparison'' between $A^\dagger$ and $A_\pi^\dagger$ is provided by the following:
		
		\begin{lemma}[Weak Base Change]
			Suppose that $A$ is flat over $\Z_p$. Then the map (\ref{eq:wbc}) is an isomorphism.
		\end{lemma}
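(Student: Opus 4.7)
The plan is to apply Lemma \ref{l:red} to the natural map $f: A_\pi^\dagger \to (A^\dagger)_\pi^\dagger$, exploiting the observation that both source and target have the same reduction modulo $\pi$. First I would identify the reductions: combining the Monsky--Washnitzer isomorphism \cite[Theorem 1.4]{MW} (applied both to $A_\pi$ and to $(A^\dagger)_\pi$) with the identification $R/\pi R = \F_p$ and with MW applied to $A$ over $\Z_p$, I would chain together
\begin{equation*}
A_\pi^\dagger/\pi A_\pi^\dagger \;\cong\; A_\pi/\pi A_\pi \;\cong\; A/pA \;\cong\; A^\dagger/pA^\dagger \;\cong\; (A^\dagger)_\pi/\pi(A^\dagger)_\pi \;\cong\; (A^\dagger)_\pi^\dagger/\pi(A^\dagger)_\pi^\dagger,
\end{equation*}
and verify that under these identifications $\overline{f}$ is simply the identity on $A/pA$, which is bijective.

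Next I would verify the remaining hypotheses of Lemma \ref{l:red}. Flatness of $A$ over $\Z_p$ (i.e.\ $p$-torsion freeness) is inherited by the $p$-adic completion $A^\infty$, and therefore by the subring $A^\dagger \subseteq A^\infty$. Applying flat base change along $\Z_p \to R$ gives flatness of $A_\pi$ and $(A^\dagger)_\pi$ over $R$; passing to weak completions inside the $\pi$-adic completions shows that $A_\pi^\dagger$ and $(A^\dagger)_\pi^\dagger$ are both $\pi$-torsion free. Each $A_\pi^m(b)$ is $\pi$-adically complete by construction, so $A_\pi^\dagger$ has enough completeness to lift elements from its reduction, and the target is $\pi$-adically separated as a subring of $(A^\dagger)_\pi^\infty$. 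Parts (1) and (3) of Lemma \ref{l:red} then yield surjectivity and injectivity of $f$ respectively, hence the claimed isomorphism.

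The main obstacle is the subtle point that the $\pi$-adic completeness of $A_\pi^\dagger$ as a whole does not follow immediately from completeness of the individual pieces $A_\pi^m(b)$; some care is needed to confirm that a $\pi$-adic Cauchy sequence in $A_\pi^\dagger$ converges to an element of $A_\pi^\dagger$ rather than merely of $A_\pi^\infty$. A transparent alternative that bypasses this issue is to prove surjectivity by a direct substitution argument: starting from a presentation of $b \in (A^\dagger)_\pi^\dagger$ in weak generators of $(A^\dagger)_\pi = R \otimes_{\Z_p} A^\dagger$, expand each generator of $A^\dagger$ in weak generators of $A$ over $\Z_p$, and compose the two growth conditions. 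Using $v_\pi(p) \geq 1$, the $p$-adic condition on the inner expansions translates into an additional $\pi$-adic decay which, combined with the outer $\pi$-adic condition, shows that the resulting expression in generators of $A$ satisfies the defining inequality of some $A_\pi^{m'}(b')$, giving a preimage in $A_\pi^\dagger$.
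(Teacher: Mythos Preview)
Your overall structure matches the paper's: both reduce injectivity to Lemma \ref{l:red}(\ref{i:inj}) via the identification of reductions with $A/pA$, and both recognize that Lemma \ref{l:red}(\ref{i:surj}) cannot be invoked directly because $A_\pi^\dagger$ is not $\pi$-adically complete. Your diagnosis of that obstacle is exactly right.

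Where the paper differs is in the surjectivity argument. Rather than compose two layers of growth conditions as you propose, the paper observes that the image of $A_\pi^\dagger$ in $(A^\dagger)_\pi^\dagger$ is itself weakly complete; hence it suffices to show the image contains a set of weak generators, namely $(A^\dagger)_\pi$. This reduces the problem to a single substitution: any simple tensor $r\otimes a$ with $a=\sum_j p_j(a_1,\dots,a_n)\in A^\dagger$ (so $p_j\in p^j\Z_p[X_1,\dots,X_n]$ with bounded degree growth) is the image of the convergent sum $\sum_j (r\otimes p_j)(a_1,\dots,a_n)$ in $A_\pi^\dagger$, convergence holding because $p$ is topologically nilpotent in $R$. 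Your double-substitution route is correct in principle and would reach the same conclusion, but the bookkeeping of degrees when substituting infinite sums into polynomials of growing degree is genuinely more delicate; the paper's ``weakly complete image'' observation absorbs that difficulty cleanly.

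One small correction: you assert $v_\pi(p)\geq 1$, but in the paper's setup $\pi$ need not be a uniformizer (only topologically nilpotent), so one only has $v_\pi(p)>0$. This does not affect the argument---the $p$-adic decay still converts to $\pi$-adic decay, just with a different rate---but the constant in your $A_\pi^{m'}(b')$ should be adjusted accordingly.
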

		\begin{proof}
			The flatness condition guarantees that $A_\pi^\dagger$ and $(A^\dagger)_\pi^\dagger$ are $\pi$-torsion free and $\pi$-adically separated. By Lemma \ref{l:red}, it suffices to prove that (\ref{eq:wbc}) is surjective. Since $(A^\dagger)_\pi$ is a set of weak generators for $(A^\dagger)_\pi^\dagger$, and since the image of (\ref{eq:wbc}) is weakly complete, we only need to show that the image contains $(A^\dagger)_\pi$. Now $A$ constitutes a set of weak generators for $A^\dagger$, so every element of $(A^\dagger)_\pi$ is a finite sum of elements of the form
			\begin{equation*}
				r \otimes \sum_j p_j(a_1,...,a_n),
			\end{equation*}
			where $r \in R$, $a_1,...,a_n \in A$ and $p_j \in p^j \Z_p[X_1,...,X_n]$ has degree $\leq mj+b$. Since $p$ is topologically nilpotent in $R$, we see that this is exactly the image of the convergent sum
			\begin{equation*}
				\sum_j (r \otimes p_j)(a_1,...,a_n) \in A_\pi^\dagger,
			\end{equation*}
			where we regard $r \otimes p_j$ as a polynomial in $R[X_1,...,X_n]$.
		\end{proof}

\section{Fredholm Theory}\label{s:newton}

	Let $K = R[\tfrac{1}{\pi}]$ denote the field of fractions of $R$. If $q$ is a power of $p$, then we define the ring $R_q = \Z_q \otimes_{\Z_p} R$ and its field of fractions $K_q = K \otimes_R R_q$. We will now discuss the Fredholm theory of nuclear operators acting on $K_q$-vector spacse and the corresponding theory of Newton and Hodge polygons of such operators.
		
	\subsection{Spectral Theory of Completely Continuous Operators}
	
		\subsubsection{Normed Vector Spaces}
	
		\begin{definition}
			A \emph{normed vector space} over $K_q$ is a $K_q$-vector space of the form $V = K \otimes_R M$, where $M$ is a $\pi$-adically separated $R_q$-module. We say that $V$ is a \emph{Banach space} over $K_q$ if $M$ is $\pi$-adically complete.
		\end{definition}
		
		Let $V = K \otimes_R M$ be a normed vector space over $K_q$. We topologize $V$ by taking the $R_q$-submodules $\frakm^n M$ to be a fundamental system of neighborhoods of $0$ in $V$. As the terminology suggests, the choice of $M \subseteq V$ uniquely determines a norm $\|\cdot \|_\pi$ on $V$ defined as follows: If $\pi R = \frakm^d R$, then we define the \emph{$\pi$-adic valuation} on $V$ via
		\begin{equation*}
			v_\pi(x)	=	\frac{1}{d}\sup\{n \in \Z:x \in \frakm^n M\}.
		\end{equation*}
		The condition that $M$ is $\pi$-adically separated guarantees that $\|x \|_\pi = p^{-v_\pi(x)}$ defines a norm on $V$. Since we can recover $M$ as the $R_q$-submodule of $x \in V$ with $\|x\|_\pi \leq 1$, the choice of $M$ is equivlent to the norm $\|\cdot \|_\pi$ or the valuation $v_\pi$.
		
		\begin{example}
			Let $I$ be a set. We define $b(I)$ to be the $K_q$-vector space of $I$-families $x = (x_i)_{i \in I}$ of elements of $K_q$ which are $\pi$-adically bounded, in the sense that
				\begin{equation*}
					\|x\|_\pi = \sup_{i \in I} \|x_i\| < \infty.
				\end{equation*}
				Then $b(I)$ is a Banach space over $K_q$. For each $i \in I$, let $e_i \in b(I)$ denote the element whose $i$th component is 1, and whose other components are all $0$. Then we may represent an element $x \in b(I)$ as a formal sum
				\begin{equation}\label{eq:formalsum}
					x	=	\sum_{i \in I} x_i e_i.
			\end{equation}
			Let $c(I)$ be the subspace of $b(I)$ consisting of those $x$ for which $x_i \to 0$ in the finite-complement topology on $I$. Then $c(I)$ is a Banach space over $K_q$, and for each $x \in c(I)$ the formal sum (\ref{eq:formalsum}) converges to $x$ in the norm topology on $c(I)$.
		\end{example}
		
		Let $V = K \otimes_R M$ and $W = K \otimes_R N$ be normed vector spaces over $K_q$. Let $M \hat{\otimes}_{R_q} N$ denote the $\pi$-adic completion of the $R_q$-module $M \otimes_{R_q} N$. We define the \emph{completed tensor product} of $V$ and $W$ to be the $K_q$-Banach space $V \hat{\otimes}_{K_q} W = K \otimes_R (M \hat{\otimes}_{R_q} N)$. Similarly, for each $n \geq 0$ we let $\wedge^n M$ denote the $\pi$-adic completion of the $n$th exterior power of $M$ (over $R_q$). We define the $n$th \emph{completed exterior power} of $V$ to be the $K_q$-Banach space $\wedge^n V = K \otimes_R \wedge^n M$.
		
		We write $\Hom(V,W)$ for the $K_q$-vector space of \emph{continuous} $K_q$-linear maps $V \to W$, equipped with the usual norm
		\begin{equation*}
			\|\psi\|_\pi	=	\sup_{x \in M} \| \psi(x) \|_\pi < \infty.
		\end{equation*}
		In particular, we will write $V^* = \Hom(V,K)$ for the \emph{continuous} dual space of $V$.
			
		\subsubsection{Completely Continuous Maps}
		
		\begin{definition}
			Let $V$ and $W$ be Banach spaces over $K_q$. We say that a continuous $K_q$-linear map $\psi:W \to V$ is \emph{completely continuous} if $\psi$ is a limit of operators of finite rank. Equivalently, $\psi$ is completely continuous if and only if $\psi$ lies in the image of the natural map
			\begin{equation*}
				W^* \hat{\otimes}_{K_q} V \to \Hom(W,V).
			\end{equation*}
		\end{definition}
		
		In particular, we identify the space of completely continuous operators $V \to V$ with the Banach space $V^* \hat{\otimes}_{K_q} V$. The \emph{trace map} on this space is defined in the usual way:
		\begin{align*}
			\Tr:V^* \hat{\otimes}_{K_q} V	&\to	K_q	\\
			f \otimes v	&\mapsto	f(v).
		\end{align*}
		If $\psi:V \to V$ is completely continuous, then for each $n\geq 0$ the $n$th exterior power $\wedge^n \psi$ is a completely continuous operator on $\wedge^n V$.
		
		\begin{definition}
			Let $\psi:V \to V$ be a completely continuous operator. For each $n \geq 0$ we define the $n$th \emph{Fredholm coefficient} to be
			\begin{equation*}
				c_n(\psi)	=	(-1)^n \Tr(\wedge^n \psi).
			\end{equation*}
			The \emph{Fredholm determinant} of $\psi$ is the power series
			\begin{equation*}
				\det(I - s \psi)	=	\sum_{n = 0}^\infty c_n(\psi) s^n	\in	1+sK_q\llbracket s\rrbracket.
			\end{equation*}
		\end{definition}
		
		\begin{lemma}[{\cite[Proposition 15]{Serre}}]\label{l:ccdual}
			Let $\psi:V \to V$ be a completely continuous operator. Then the dual map $\psi^*:V^* \to V^*$ is completely continuous, and there is an equality of Fredholm series
			\begin{equation*}
				\det(I-s\psi) = \det(I-s\psi^*).
			\end{equation*}
		\end{lemma}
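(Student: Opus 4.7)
The plan is to reduce both assertions to direct computations on rank one operators, relying on the fact that completely continuous operators are by definition the closure of the finite rank ones. Throughout I would identify $V^* \hat{\otimes}_{K_q} V$ with the space of completely continuous endomorphisms of $V$ as in the preceding discussion.

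First I would analyze dualization on finite rank operators. For $f \in V^*$ and $v \in V$, the rank one operator $\psi = f \otimes v$ acts on $V$ by $x \mapsto f(x) v$, and its dual acts on $V^*$ by $g \mapsto g(v) f$. The latter corresponds to the element $\hat v \otimes f \in V^{**} \hat{\otimes}_{K_q} V^*$, where $\hat v \in V^{**}$ denotes evaluation at $v$. Since the assignment $\psi \mapsto \psi^*$ is norm non-increasing and finite rank operators are dense among completely continuous ones, dualization extends to a continuous map $V^* \hat{\otimes} V \to V^{**} \hat{\otimes} V^*$, and in particular $\psi^*$ is completely continuous whenever $\psi$ is.

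Next I would prove the trace identity $\Tr(\psi) = \Tr(\psi^*)$ for all completely continuous $\psi$. Both sides define continuous $K_q$-linear functionals on $V^* \hat{\otimes} V$, so it suffices to check equality on rank one operators; for $\psi = f \otimes v$ we have $\Tr(\psi) = f(v)$ and $\Tr(\psi^*) = \hat v(f) = f(v)$. Combining this with the functorial isomorphism $\wedge^n(\psi^*) \cong (\wedge^n \psi)^*$ (immediate on rank one operators and extended by continuity) gives
\begin{equation*}
	c_n(\psi^*) = (-1)^n \Tr(\wedge^n(\psi^*)) = (-1)^n \Tr((\wedge^n \psi)^*) = (-1)^n \Tr(\wedge^n \psi) = c_n(\psi)
\end{equation*}
for every $n$, and the claimed equality of Fredholm series follows by summing.

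The main technical obstacle is verifying that the various identifications interact properly with analytic dualization. One must check that the tautological map $V^* \hat{\otimes} V \to \Hom(V,V)$ identifies the algebraic swap $f \otimes v \mapsto \hat v \otimes f$ with the topological dual operator, and that the trace $\Tr: V^* \hat{\otimes} V \to K_q$ is genuinely continuous (this is induced by the evaluation pairing on elementary tensors). These verifications are standard but depend on the isometric embedding $V \hookrightarrow V^{**}$ afforded by non-archimedean Hahn--Banach, and they are where the Banach space structure, as opposed to merely algebraic duality, enters the argument.
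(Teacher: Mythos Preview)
The paper does not give its own proof of this lemma; it simply cites Serre's Proposition~15. Serre's argument proceeds via an orthonormal basis $\{e_i\}$ of $V$: one checks that the matrix of $\psi^*$ with respect to the dual family $\{e_i^*\}$ (which is a formal basis of $V^*$ in the sense of Example~\ref{ex:dual}) is the transpose of the matrix of $\psi$, and then the explicit formula (\ref{eq:coeff}) for Fredholm coefficients, being symmetric in rows and columns, gives the equality of determinants. Your density-and-continuity approach is a genuinely different route, more functorial and basis-free.

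There is, however, a gap in your argument at the exterior power step. You assert that $\wedge^n(\psi^*) \cong (\wedge^n\psi)^*$ is ``immediate on rank one operators and extended by continuity,'' but this does not work as stated. First, the two operators live on different spaces, $\wedge^n(V^*)$ versus $(\wedge^n V)^*$, and the natural pairing map $\wedge^n(V^*) \to (\wedge^n V)^*$ is not obviously an isomorphism in the Banach setting (it is the analogue of $c(I) \hookrightarrow b(I)$). Second, for a rank one $\psi$ and $n \geq 2$ both $\wedge^n\psi$ and $\wedge^n(\psi^*)$ vanish, so checking on rank one operators yields no information; and since $\psi \mapsto \wedge^n\psi$ is polynomial of degree $n$ rather than linear, density of rank one operators does not propagate the identity.

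The fix is simple and preserves your overall strategy: bypass the operator isomorphism and prove the scalar identity $\Tr(\wedge^n\psi) = \Tr(\wedge^n(\psi^*))$ directly. For $\psi = \sum_{i=1}^k f_i \otimes v_i$ of finite rank, both traces are computed on finite-dimensional subspaces ($\wedge^n\psi$ factors through $\wedge^n\mathrm{span}(v_i)$, and $\wedge^n(\psi^*)$ through $\wedge^n\mathrm{span}(f_i)$), where the matrices of $\psi$ and $\psi^*$ are $(f_i(v_j))$ and its transpose; the classical identity $\Tr(\wedge^n A) = \Tr(\wedge^n A^t)$ then applies. Both sides are continuous (degree-$n$ polynomial composed with the continuous trace) in $\psi$, so the identity extends to all completely continuous $\psi$.
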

		
		\subsubsection{Orthonormal and Integral Bases}
		
			\begin{definition}
				Let $V$ be a Banach space over $K_q$. Let $I$ be a set and let $B = \{e_i:i \in I\}$ be a family of elements of $V$ indexed by $I$. We say that $B$ is an \emph{orthonormal basis} for $V$ if, for every $x \in V$ there exists a unique $(x_i) \in c(I)$ such that  $\|x\|_\pi = \sup_{i \in I} |x_i|_\pi$ and
				\begin{equation*}
					x = \sum_{i \in I} x_i e_i.
				\end{equation*}
			\end{definition}
			
			Let $V = K_q \otimes_{R_q} M$ be a Banach space over $K_q$ and let $B = \{e_i:i \in I\}$ be subset of $V$. By \cite[Proposition 1]{Serre}, $B$ is an orthonormal basis for $V$ if and only if $B \subseteq M$ and $B$ reduces to an $\F_q$-basis for $\F_q \otimes_{R_q} M$. It follows that every Banach space over $K_q$ admits an orthonormal basis. An orthonormal basis for $V$ indexed by $I$ is equivalent to an isometric isomorphism of Banach spaces $\iota:V \xrightarrow{\sim} c(I)$, sending each $x \in V$ to the corresponding $I$-family $(x_i) \in c(I)$.
		
		\begin{definition}
			Let $V$ be a normed vector space over $K_q$. A \emph{formal basis} for $V$ is a pair $(B,\iota)$, where $B = \{e_i:i \in I\}$ is a subset of $V$, and $\iota:V \to b(I)$ is an isometric embedding sending $e_i \mapsto e_i$ for all $i \in I$.
		\end{definition}
		
		Typically we will abuse notation and refer to a formal basis $(B,\iota)$ simply by $B$, leaving the embedding into $b(I)$ implicit.
		
		\begin{example}\label{ex:dual}
			Let $V$ be a Banach space and let $B = \{e_i:i \in I\}$ be an orthonormal basis for $V$. By the preceding discussion, $B$ is a formal basis for $V$ in a natural way. For each $i \in I$, consider the continuous linear functional $e_i^* \in V^*$ defined by $e_i^*(x) = x_i$. Let $B^* = \{e_i^*:i \in I\} \subset V^*$. If $f \in V^*$ is any continuous linear functional, then we define
			\begin{equation*}
				\iota(f)	=	(f(e_i)) \in b(I).
			\end{equation*}
			Then $\iota:V^* \cong b(I)$ is an isometric isomorphism of Banach spaces over $K_q$, and the pair $(B^*,\iota)$ is a formal basis for $V^*$.
		\end{example}
		
		Let $I$ be a set. Let $\Psi = (\psi_{i,j})$ be an $I \times I$ matrix with entries in $K_q$. We say that $\Psi$ is of \emph{trace class} if $\psi_{i,i} \to 0$ in the finite-complement topology on $I$. In this case, we define the \emph{trace} of $\Psi$ to be
		\begin{equation*}
			\Tr(\Psi)	=	\sum_{i \in I} \psi_{i,i} \in K_q.
		\end{equation*}
		Choose any ordering on $I$. For each $n \geq 0$, the exterior power $\wedge^n \Psi$ is a matrix with entries in $K_q$ indexed by the set
		\begin{equation*}
			\wedge^n I	=	\{(i_1,...,i_n): i_1 < \cdots < i_n\}.
		\end{equation*}
		For each subset $J \subseteq I$, let $\Psi_J$ denote the $J \times J$ submatrix of $\Psi$ corresponding to $J$.
		
		\begin{definition}
			Suppose that $\wedge^n \Psi$ is of trace class for all $n \geq 0$. We define the $n$th \emph{Fredholm coefficient} of $\Psi$ to be
			\begin{equation}\label{eq:coeff}
				c_n(\Psi) = (-1)^n \Tr(\wedge^n \Psi) = (-1)^n\sum_{|J| = n} \det(\Psi_J).
			\end{equation}
			The \emph{Fredholm determinant} of $\Psi$ is the power series
			\begin{equation*}
				\det(I - s \Psi)	=	\sum_{n = 0}^\infty c_n(\Psi) s^n	\in	1+sK_q\llbracket s\rrbracket.
			\end{equation*}
		\end{definition}
		
		\begin{lemma}[{\cite[\S 5]{Serre}}]\label{l:ccmatrix}
			Let $V$ be a Banach space over $K_q$ and let $\psi:V \to V$ be a completely continuous operator. Let $B = \{e_i:i \in I\}$ be an orthonormal basis for $V$, and let $\psi|B$ denote the $I \times I$ matrix of $\psi$ with respect to $B$. Then there is an equality of Fredholm series
			\begin{equation*}
				\det(I - s\psi) = \det(I-s\psi|B).
			\end{equation*}
		\end{lemma}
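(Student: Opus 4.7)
The plan is to prove the equality $c_n(\psi) = c_n(\psi|B)$ of $n$th Fredholm coefficients for all $n \geq 0$. Unwinding the definitions, this reduces to two sub-claims: (i) with respect to a suitable orthonormal basis of $\wedge^n V$ induced by $B$, the matrix of $\wedge^n \psi$ has diagonal entries equal to the $n \times n$ principal minors $\det((\psi|B)_J)$ indexed by $J \in \wedge^n I$; and (ii) for any completely continuous operator $\psi$ on a Banach space with orthonormal basis $\{e_i\}$, the matrix $\psi|B$ is of trace class and its diagonal sum equals the abstract trace $\Tr(\psi) \in K_q$. Applying (ii) to $\wedge^n \psi$ and using (i) then yields
\[
c_n(\psi) = (-1)^n \Tr(\wedge^n \psi) = (-1)^n \sum_{J \in \wedge^n I} \det((\psi|B)_J) = c_n(\psi|B),
\]
which is exactly the desired equality of Fredholm series.

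For (i), I would first check that the elements $e_{i_1} \wedge \cdots \wedge e_{i_n}$ for $(i_1,\ldots,i_n) \in \wedge^n I$ form an orthonormal basis of $\wedge^n V$. Writing $V = K_q \otimes_{R_q} M$ with unit ball $M$, these wedges lie in $\wedge^n M$ and reduce to a basis of $\F_q \otimes_{R_q} \wedge^n M$ over $\F_q$, which suffices by the characterization recalled in the text. From the definition of the wedge of operators, a routine expansion of $\wedge^n \psi(e_{j_1} \wedge \cdots \wedge e_{j_n}) = \psi(e_{j_1}) \wedge \cdots \wedge \psi(e_{j_n})$ in the basis $B$ then yields $(\wedge^n \psi)_{J',J} = \det((\psi|B)_{J',J})$ for the $J' \times J$ submatrix, and so in particular the diagonal entries are the principal minors.

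The central step is (ii). Since $\psi$ is completely continuous, it lies in the image of $V^* \hat{\otimes}_{K_q} V \to \Hom(V,V)$, so we may write $\psi = \sum_k f_k \otimes v_k$ with $\|f_k\|_\pi \|v_k\|_\pi \to 0$, and by definition $\Tr(\psi) = \sum_k f_k(v_k)$. Expanding $v_k = \sum_i (v_k)_i e_i$ in the basis $B$, the diagonal entry $\psi_{i,i} = e_i^*(\psi(e_i))$ equals $\sum_k f_k(e_i)(v_k)_i$, and the estimate $|f_k(e_i)(v_k)_i|_\pi \leq \|f_k\|_\pi |(v_k)_i|_\pi$, combined with $\|f_k\|_\pi \|v_k\|_\pi \to 0$ and $(v_k)_i \to 0$ for each fixed $k$, gives joint non-archimedean convergence of the double sum. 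A Fubini argument then yields
\[
\sum_{i \in I} \psi_{i,i} = \sum_k \sum_i f_k(e_i)(v_k)_i = \sum_k f_k(v_k) = \Tr(\psi).
\]
The main obstacle is setting up this Fubini step carefully enough to guarantee trace-class convergence simultaneously for all the exterior powers $\wedge^n \psi$; once this is in hand, assembling (i) and (ii) gives the claimed equality of Fredholm determinants.
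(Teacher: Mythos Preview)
The paper does not give its own proof of this lemma; it simply records the statement and cites \cite[\S 5]{Serre}. Your argument is essentially the one Serre gives there: one reduces to showing that the abstract trace of a completely continuous operator agrees with the diagonal sum of its matrix in an orthonormal basis, and then applies this to each $\wedge^n \psi$ using the standard orthonormal basis of $\wedge^n V$. Your Fubini step for (ii) is correct in the non-archimedean setting, and the reduction via (i) is routine; there is nothing to add.
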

		
		\begin{definition}
			We say that an $I \times I$ matrix $\Psi$ with entried in $K_q$ is \emph{tight} if:
			\begin{enumerate}
				\item	For each $i \in I$, the $i$th column $\Psi_i$ of $\Psi$ lies in $b(I)$.
				\item	$\Psi_i \to 0$ in the finite-complement topology on $I$.
			\end{enumerate}			
		\end{definition}
		
		The definition ensures that if $\Psi$ is a tight $I \times I$ matrix and $x \in b(I)$, then the sum
		\begin{equation*}
			\psi(x) = \sum_{i \in I} x_i \Psi_i.
		\end{equation*}
		converges in $b(I)$. Thus every tight $I \times I$ matrix defines an associated continuous $K_q$-linear operator $\psi:b(I) \to b(I)$.
		
		\begin{proposition}\label{p:tight}
			Let $\Psi$ be a tight $I \times I$ matrix, and let $\psi:b(I) \to b(I)$ be the corresponding $K_q$-linear operator. Then $\psi$ is completely continuous, and there is an equality of Fredholm series
			\begin{equation*}
				\det(I-s\psi)	=	\det(I-s\Psi).
			\end{equation*}
		\end{proposition}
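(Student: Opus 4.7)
The plan is to approximate both $\psi$ and the matrix $\Psi$ by finite-rank/finite data indexed by the directed system of finite subsets $J \subseteq I$, and then to pass to the limit. For each finite $J \subseteq I$, let $\Psi^{(J)}$ denote the matrix obtained from $\Psi$ by zeroing out all columns indexed by $i \notin J$, and let $\psi_J: b(I) \to b(I)$ be the corresponding operator, so $\psi_J(e_i) = \Psi_i$ if $i \in J$ and $0$ otherwise.

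First I would verify that $\psi$ is completely continuous. The image of $\psi_J$ is contained in the finite-dimensional span of $\{\Psi_i : i \in J\}$, so $\psi_J$ has rank at most $|J|$. The difference $\psi - \psi_J$ acts on $x \in b(I)$ by $(\psi-\psi_J)(x) = \sum_{i \notin J} x_i \Psi_i$, which in the ultrametric norm satisfies $\|(\psi-\psi_J)(x)\|_\pi \leq \|x\|_\pi \cdot \sup_{i \notin J} \|\Psi_i\|_\pi$. By tightness condition (2), $\sup_{i \notin J} \|\Psi_i\|_\pi \to 0$ as $J$ exhausts $I$, so $\psi_J \to \psi$ in operator norm. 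Hence $\psi$ is a norm-limit of finite-rank operators, and is therefore completely continuous.

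Next I would compute $\det(I - s\psi_J)$ for each finite $J$. Writing $\psi_J = \sum_{i \in J} \Psi_i \otimes e_i^*$ as a sum of $|J|$ rank-one operators, the standard identity $\Tr(\psi_J^k) = \Tr(M^k)$ for the $J \times J$ matrix $M_{ij} = e_i^*(\Psi_j) = \Psi_{i,j}$ gives $\det(I-s\psi_J) = \det(I_J - s\Psi_{J,J})$, where $\Psi_{J,J}$ is the principal submatrix. Expanding this finite determinant produces
\begin{equation*}
\det(I-s\psi_J) = \sum_{n=0}^{|J|}(-s)^n \sum_{\substack{K \subseteq J \\ |K|=n}} \det(\Psi_K),
\end{equation*}
which is precisely a truncation of the combinatorial sum defining $\det(I - s\Psi)$. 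By continuity of the Fredholm determinant on completely continuous operators (cf.\ \cite[\S 5]{Serre}), $\det(I-s\psi_J) \to \det(I-s\psi)$ coefficient-wise, and comparing with the explicit expression gives $\det(I-s\psi)=\det(I-s\Psi)$, provided each Fredholm coefficient of $\Psi$ is a well-defined element of $K_q$ realized as this limit.

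The main obstacle is ensuring that the series $c_n(\Psi) = (-1)^n \sum_{|K|=n}\det(\Psi_K)$ converges in $K_q$ and that its partial sums over $K \subseteq J$ converge to $c_n(\Psi)$ as $J$ exhausts $I$. The key estimate is the Hadamard-type bound $|\det(\Psi_K)|_\pi \leq \prod_{i \in K}\|\Psi_i\|_\pi$. For fixed $n$ and $\epsilon > 0$, let $J_0 = \{i : \|\Psi_i\|_\pi \geq \epsilon\}$, which is finite by tightness. Any $K$ with $|K|=n$ and $K \not\subseteq J_0$ contains an index $i$ with $\|\Psi_i\|_\pi < \epsilon$, so $|\det(\Psi_K)|_\pi \leq \epsilon \cdot M^{n-1}$ where $M = \sup_i \|\Psi_i\|_\pi < \infty$. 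This shows $\det(\Psi_K) \to 0$ in the finite-complement topology on $n$-element subsets, so the non-archimedean convergence criterion yields both the convergence of $c_n(\Psi)$ and the approximation by finite truncations. This identifies the coefficient-wise limit of $\det(I-s\psi_J)$ with $\det(I-s\Psi)$ and completes the proof.
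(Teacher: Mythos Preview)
Your argument is correct and complete, but it takes a genuinely different route from the paper's proof. The paper argues by duality: it notes that the transpose $\Psi'$ has rows tending to zero, hence defines a completely continuous operator $\psi'$ on $c(I)$; identifying $b(I)\cong c(I)^*$ as in Example~\ref{ex:dual} one finds $\psi=(\psi')^*$, and then Lemma~\ref{l:ccdual} (duality preserves Fredholm determinants) together with Lemma~\ref{l:ccmatrix} (which handles the orthonormalizable space $c(I)$) yield the result in one line. Your approach instead approximates $\psi$ directly by the finite-rank truncations $\psi_J$, computes their Fredholm determinants as finite principal minors via a Sylvester-type identity, and passes to the limit using continuity of the Fredholm determinant and an explicit ultrametric Hadamard estimate. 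The paper's proof is shorter because it leverages the already-established Serre machinery on $c(I)$; your proof is more self-contained and makes the convergence of the matrix coefficients $c_n(\Psi)$ explicit, which has independent value.

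One small caution: your deduction of $\det(I-s\psi_J)=\det(I_J-s\Psi_{J})$ from the identity $\Tr(\psi_J^k)=\Tr(\Psi_J^k)$ implicitly uses Newton's identities, which is problematic when $K_q$ has characteristic $p$ (a case the paper explicitly allows). The conclusion is nevertheless valid in all characteristics: writing $\psi_J=\iota\circ p$ with $p:b(I)\to K_q^{J}$ the projection $x\mapsto(x_j)_{j\in J}$ and $\iota:K_q^{J}\to b(I)$ the map $(a_j)\mapsto\sum_j a_j\Psi_j$, one has $\Tr(\wedge^n(\iota p))=\Tr(\wedge^n\iota\circ\wedge^n p)=\Tr(\wedge^n p\circ\wedge^n\iota)=\Tr(\wedge^n(p\iota))$, and $p\iota$ is precisely the finite matrix $\Psi_J$. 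Replacing the trace-of-powers step with this observation makes your argument characteristic-free.
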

		\begin{proof}
			Let $\Psi'$ denote the transpose of $\Psi$. From (\ref{eq:coeff}) we see that $\det(I-s\Psi) = \det(I-s\Psi')$. Since the rows of $\Psi'$ tend to $0$ in the finite-complement topology on $I$, $\Psi'$ defines a \emph{completely continuous} operator $\psi':c(I) \to c(I)$. If we identify $c(I)^* \cong b(I)$ as in Example \ref{ex:dual}, then $\psi = (\psi')^*$. By Lemma \ref{l:ccdual}, we have
			\begin{equation*}
				\det(I-s\psi) = \det(I-s\psi') = \det(I-s\Psi') = \det(I-s\Psi).
			\end{equation*}
		\end{proof}
		
		In light of the proposition, it will generally be convenient to identify the tight $I \times I$ matrix $\Psi$ with the corresponding operator $\psi:b(I) \to b(I)$.
		
		\begin{proposition}\label{p:basis}
			Let $V' \subseteq V$ be a containment of normed vector spaces over $K_q$. Let $\psi:V \to V$ be a $K_q$-linear operator such that $V'$ is $\psi$-invariant. Suppose that there exists a formal basis $B = \{e_i:i \in I\}$ and elements $c_i \in K_q$ such that $B' = \{c_ie_i:i \in I\}$ is a formal basis for $V'$. Then there is an equality of Fredholm series
			\begin{equation*}
				\det(I-s\psi|B) = \det(I-s\psi|B'),
			\end{equation*}
			when either series exists.
		\end{proposition}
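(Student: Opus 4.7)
The plan is to compute the matrix $\Phi' = \psi|B'$ directly from $\Phi = \psi|B$ and show that every principal minor is preserved. Write $\psi(e_j) = \sum_{i \in I} a_{i,j} e_i$, so that $\Phi$ has entries $(a_{i,j})$. Since $V'$ is $\psi$-invariant and each $c_j \in K_q^\times$ (as $c_j e_j$ is a formal basis element, hence nonzero), we compute
\begin{equation*}
\psi(c_j e_j) = c_j \sum_{i \in I} a_{i,j} e_i = \sum_{i \in I} \frac{c_j}{c_i} a_{i,j} \cdot (c_i e_i),
\end{equation*}
so $\Phi'$ has entries $\Phi'_{i,j} = (c_j/c_i) a_{i,j}$.

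The key observation is that every finite principal minor of $\Phi'$ equals the corresponding minor of $\Phi$: for any finite $J \subseteq I$,
\begin{equation*}
\det(\Phi'_J) = \sum_{\sigma \in \mathrm{Sym}(J)} \mathrm{sgn}(\sigma) \prod_{j \in J} \frac{c_j}{c_{\sigma(j)}} a_{\sigma(j),j} = \left(\prod_{j \in J} \frac{c_j}{c_{\sigma(j)}}\right) \cdot \det(\Phi_J)_{\text{-ish}},
\end{equation*}
but since each $\sigma$ is a bijection on $J$, we have $\prod_{j \in J} c_{\sigma(j)} = \prod_{j \in J} c_j$, and the scalar factor is identically $1$ term by term in the Leibniz expansion. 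Hence $\det(\Phi'_J) = \det(\Phi_J)$.

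Consequently, the $n$th Fredholm coefficients satisfy
\begin{equation*}
c_n(\Phi') = (-1)^n \sum_{|J|=n} \det(\Phi'_J) = (-1)^n \sum_{|J|=n} \det(\Phi_J) = c_n(\Phi)
\end{equation*}
term by term. In particular, the formal sum defining $c_n(\Phi)$ converges (i.e., $\wedge^n \Phi$ is of trace class) if and only if the corresponding sum for $\Phi'$ does, and when either exists both series agree as elements of $1 + sK_q\llbracket s \rrbracket$.

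The argument is essentially a formal calculation; the only subtlety worth flagging is that the $c_i$ are allowed to have arbitrary valuations, so the norms on $V$ and $V'$ (and the matrices $\Phi$, $\Phi'$) can differ drastically, but this asymmetry is invisible to every principal minor because the diagonal scaling cancels within each permutation. This is why the statement makes sense even though the hypothesis does not require $\psi$ to be completely continuous on either $V$ or $V'$—only that one of the two Fredholm series is defined.
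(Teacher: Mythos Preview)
Your argument is correct and is essentially the paper's proof unpacked: the paper simply observes that for each finite $J \subseteq I$ the submatrices $(\psi|B)_J$ and $(\psi|B')_J$ are similar (via conjugation by the diagonal matrix $\mathrm{diag}(c_j)_{j\in J}$), hence have equal determinant, and then invokes the formula $c_n(\Psi) = (-1)^n\sum_{|J|=n}\det(\Psi_J)$. Your Leibniz-expansion computation is exactly this similarity spelled out; the only cosmetic issue is the informal ``$\det(\Phi_J)_{\text{-ish}}$'' display, which you immediately repair in the next sentence.
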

		\begin{proof}
			For each finite set $J \subseteq I$ the finite matrices $(\psi|B)_J$ and $(\psi'|B)_J$ are similar, and the result follows from the explicit formula (\ref{eq:coeff}).
		\end{proof}
		
		
	\subsection{Nuclear Operators}
	
		Let $V$ be a $K_q$-vector space and let $\psi:V \to V$ be a $K_q$-linear operator. For every polynomial $g \in 1+sK_q[s]$, we define the subspace
		\begin{equation*}
			V_g	=	\bigcup_{n = 1}^\infty	\ker(g(\psi)^n).
		\end{equation*}
		Let $\overline{K}_q$ be an algebraic closure of $K_q$. For any $\lambda \in \overline{K}_q^\times$, let $g_\lambda = 1+sK_q[s]$ be the irreducible polynomial of $\lambda$ over $K_q$. We say that $\lambda$ is a \emph{non-zero eigenvalue} if $V_{g_\lambda} \neq 0$.
		
		\begin{definition}
			A $K_q$-linear operator $\psi:V \to V$ is \emph{nuclear} if:
			\begin{enumerate}
				\item	For every $g \in 1+sK_q[s]$ there is a $\psi$-equivariant decomposition $V = F_g \oplus V_g$, where $g(\psi):F_g \to F_g$ is bijective and $V_g$ is finite-dimensional. 
				\item	The non-zero eigenvalues of $\psi$ tend to $\infty$ in the finite-complement topology.
			\end{enumerate}			
		\end{definition}
		
		Let $\psi:V \to V$ be a nuclear operator. For each real number $r$ define the subspace
		\begin{equation*}
			V^{<r} = \sum_{v_\pi(\lambda) > -r} V_{g_\lambda},
		\end{equation*}
		Since $\psi$ is nuclear, each $V^{< r}$ is a finite-dimensional subspace of $V$. The \emph{characteristic series} of $\psi$ is defined to be the power series
		\begin{equation*}
			C(\psi|V,s) = \lim_{r \to \infty} \det(I-s\psi|V^{< r}) \in 1+K_q[[s]].
		\end{equation*}
		Over $\overline{K}_q$, this series factors as a product $C(\psi|V,s) = \prod_\lambda (1-s/\lambda)$, where $\lambda$ runs through the non-zero eigenvalues of $\psi$ counted with multiplicity. It follows that $C(\psi|V,s)$ is a $\pi$-adic entire function on $K_q$.
		
		\begin{example}\label{ex:nuclear}
			The following basic examples of nuclear operators, which we will refer to frequently, are due to Monsky \cite{Monsky}:
			\begin{enumerate}
				\item	\label{i:cc}Let $V$ be a Banach space over $K_q$. Let $\psi:V \to V$ be a completely continuous operator. Then by \cite[Theorem 1.3]{Monsky}, $\psi$ is nuclear and
					\begin{equation*}
						C(\psi|V,s)	=	\det(I-s\psi).
					\end{equation*}
				\item	\label{i:quotient}Let $V$ be a $K_q$-vector space, and let $\psi:V \to V$ be a nuclear operator. Let $W \subseteq V$ be a $\psi$-invariant subspace such that $\psi$ restricts to a nuclear operator on $W$. Then by \cite[Theorem 1.4]{Monsky} the action of $\psi$ on $V/W$ is nuclear and 
					\begin{equation*}
						C(\psi|V,s)	=	C(\psi|W,s)\cdot C(\psi|V/W,s).
					\end{equation*}
				\item	\label{i:union}Let $V$ be a $K_q$-vector space. Let $I$ be a linearly ordered set, and let $\{V_i\}_{i \in I}$ be a family of $K_q$-subspaces of $V$ such that $V_j \subseteq V_j$ whenever $i \leq j$, with $V = \bigcup_{i \in I} V_i$. Let $\psi:V \to V$ be a $K_q$-linear operator. Suppose that for all $i \gg 0$, $\psi$ restricts to a nuclear operator on $V_i$, and that the characteristic series $C(\psi|V_i,s)$ is independent of $i$. Then by \cite[Theorem 1.6]{Monsky}, $\psi$ is a nuclear operator on $V$ and for such $i$ we have
					\begin{equation*}
						C(\psi|V,s)	=	C(\psi|V_i,s).
					\end{equation*}
			\end{enumerate}			
		\end{example}
		
%
%
		
	\subsection{Newton and Hodge Polygons}
	
		\begin{definition}
			Let $S$ be a countable multiset of real numbers. We say that $S$ is a \emph{slope set} if for all $r \in R$, the multiset $S^{< r} = \{s \in S : s < r\}$ is finite.
		\end{definition}
		
		Let $S$ be a slope set. If $S$ is finite of cardinality $n$, then there is a unique convex function $f:[0,n] \to \mathbb{R}\sqcup \{\infty\}$ such that:
		\begin{enumerate}
			\item	$f(0) = 0$
			\item	For each $0 \leq i < n$, the restriction of $f$ to $[i,i+1]$ is a linear function.
			\item	$S$ is equal to the multiset $\{f(i+1)-f(i): 0 \leq i < n\}$.
		\end{enumerate}
		In this case, we define the \emph{Newton polygon} $\NP(S)$ of $S$ to be the graph of $f$ in $\R^2$. If $S$ is not finite, then we define the \emph{Newton polygon} of $S$ to be the union
		\begin{equation*}
			\NP(S)	=	\bigcup_r \NP(S^{< r}).
		\end{equation*}
		
		If $S'$ is another slope set, we define the \emph{concatenation} of $S$ and $S'$ to be the slope set $S \sqcup S'$. In particular, we will write $S^{\times n}$ for the $n$-fold concatenation of $S$ with itself. The basic operations on slope sets carry over to their Newton polygons as well. We define a partial order on slope sets (or their Newton polygons) by writing $S_1 \succeq S_2$ whenever the Newton polygon of $S_1$ lies on or above the Newton polygon of $S_2$.
		
		\begin{example}
			\begin{enumerate}
				\item	Let $P(s) = 1+c_1 s + \cdots + c_d s^d$ be a polynomial with coefficients in $K_q$. We define the \emph{$\pi$-adic Newton polygon} $\NP_\pi P(s)$ of $P(s)$ to be the lower convex hull in $\R^2$ of the points $(n,v_\pi(c_n))$ for $0 \leq n \leq d$. Then $\NP_\pi P(s)$ is the Newton polygon of a unique slope set which we call the $\pi$-adic \emph{Newton slopes} of $P(s)$. If we factor
				\begin{equation*}
					P(s)	=	\prod_{i = 1}^d (1-\alpha_i s)
				\end{equation*}
				over $\overline{K}_q$, then the $\pi$-adic Newton slopes of $P(s)$ are precisely the $v_\pi(\alpha_i)$ counted with multiplicity.
				\item	Let $C(s) \in 1+s K_q\llbracket s\rrbracket$ be a power series converging in the disk $v_\pi(s) > -r$. By the Weierstrass preparation theorem there is a factorization
					\begin{equation*}
						C(s)	=	P(s) \cdot C'(s),
					\end{equation*}
					where $P(s) \in 1+sK_q[s]$ has all $\pi$-adic Newton slopes $< r$, and $C'(s)$ converges and is non-zero in the disk $v_\pi(s) > -r$. In this case we define $\NP_\pi^{< r} C(s) = \NP_\pi P(s)$. In particular, if $C(s)$ is entire then we may write
					\begin{equation*}
						\NP_\pi C(s) = \bigcup_r \NP_\pi^{< r} C(s).
					\end{equation*}
				\item	Let $V$ be a $K_q$-vector space and let $\psi:V \to V$ be a nuclear operator. Then we define the \emph{$\pi$-adic Newton polygon} of $\psi$ to be $\NP_\pi (\psi) = \NP_\pi C(\psi,s)$.
			\end{enumerate}
		\end{example}
		
		\begin{definition}
			Let $V$ be a Banach space over $K_q$. Let $\psi:V \to V$ be a completely continuous operator. The $\pi$-adic \emph{Hodge polygon} $\HP_\pi (\psi)$ of $\psi$ is the lower convex hull in $\R^2$ of the points $(n,v_\pi \wedge^n \psi)$.
		\end{definition}
		
		Let $I$ be a set, and let $\Psi$ be a tight $I \times I$ matrix with entries in $K_q$. Then we may write $\HP_\pi(\Psi)$ for the Hodge polygon of the induced operator $b(I) \to b(I)$. For tight matrices, the following ``naive'' Hodge polygon is often useful for making explicit estimates:
		
		\begin{definition}
			The \emph{column Hodge polygon} $\cHP_\pi (\Psi)$ of $\Psi$ is the convex polygon with slope set
			\begin{equation*}
				\{v_\pi \Psi(e_i):i \in I\}.
			\end{equation*}
		\end{definition}
		
		It is straightforward to show that $\HP_\pi(\Psi)$ always lies on or above $\cHP_\pi(\psi)$, and therefore that $\HP_\pi(\Psi)$ is the Newton polygon of a unique slope set. We will frequently make use of the following much stronger estimate:
		
		\begin{lemma}\label{l:hodgeSlopes}
			Let $\Psi$ be a (finite) $n \times n$ matrix with entries in $K_q$. Then every slope of $\HP_\pi(\Psi)$ is greater than or equal to the corresponding slope of $\cHP_\pi(\Psi)$. Consequently, if both polygons pass through a point $(n,m)$ then they agree on the interval $[0,n]$.
		\end{lemma}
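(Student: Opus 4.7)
The plan is to exploit the Smith normal form over the discrete valuation ring $R_q$. Write $\Psi = UDV$ with $U, V \in \GL_n(R_q)$ and $D = \mathrm{diag}(\pi^{t_1}, \ldots, \pi^{t_n})$ for integers $t_1 \leq \cdots \leq t_n$ (with $t_i = +\infty$ past the rank of $\Psi$). By the standard minors characterization $\sum_{i \leq k} t_i = v_\pi(\wedge^k \Psi)$, so the $t_i$ are precisely the sorted slopes of $\HP_\pi(\Psi)$.

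Because $U, U^{-1} \in M_n(R_q)$, left multiplication by $U$ is a $\pi$-adic isometry on $K_q^n$ and thus preserves column valuations. This reduces the problem to the case $\Psi = DV$, in which the $j$-th column valuation admits the explicit formula
\[
c_j := v_\pi(\Psi(e_j)) = \min_{1 \leq k \leq n}\bigl(t_k + v_\pi(V_{kj})\bigr).
\]
Let $c_{(1)} \leq \cdots \leq c_{(n)}$ denote the sorted column valuations; the goal reduces to proving $t_i \geq c_{(i)}$ for each $i$.

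The heart of the argument is a combinatorial extraction of $i$ columns whose valuations are bounded above by $t_i$. Since $V \bmod \pi \in \GL_n(\F_q)$, its first $i$ rows are linearly independent, so there exist indices $j_1, \ldots, j_i$ for which the submatrix $(V_{a, j_b})_{1 \leq a, b \leq i}$ has invertible reduction modulo $\pi$. Any invertible matrix over $R_q$ has each of its columns containing at least one entry of valuation $0$ (otherwise its reduction mod $\pi$ has a zero column, contradicting invertibility). Hence for each $b$ there exists $a_b \leq i$ with $v_\pi(V_{a_b, j_b}) = 0$, yielding $c_{j_b} \leq t_{a_b} \leq t_i$. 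This produces $i$ columns with valuations $\leq t_i$, so $c_{(i)} \leq t_i$, which is exactly the inequality between the $i$-th slopes of $\cHP_\pi(\Psi)$ and $\HP_\pi(\Psi)$.

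The second assertion is then purely formal: if both polygons pass through a common point $(k, m)$, then $\sum_{i \leq k} t_i = \sum_{i \leq k} c_{(i)} = m$, and combined with the pointwise inequality $t_i \geq c_{(i)}$ this forces $t_i = c_{(i)}$ for every $1 \leq i \leq k$, so the polygons agree on $[0, k]$. I anticipate no serious obstacle here; the only slight wrinkle is when $\Psi$ is not invertible, but in that case $t_i = +\infty$ for $i$ past the rank and the inequality is vacuous.
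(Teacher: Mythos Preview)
Your argument is correct. Both you and the paper use Smith normal form as the backbone, but the execution differs. The paper argues algorithmically: it tracks the effect of each elementary row/column operation used to reduce $\Psi$ to diagonal form, noting that such operations preserve $\HP_\pi$ while only \emph{increasing} the column valuations, so $\cHP_\pi$ of the original matrix sits below $\cHP_\pi = \HP_\pi$ of the diagonalized one. You instead take the factored form $\Psi = UDV$ as given, strip off $U$ by isometry, and then use the invertibility of $\overline{V}$ to extract, for each $i$, a set of $i$ columns of $DV$ whose valuations are $\leq t_i$. Your approach is arguably more direct and gives slightly finer information (you pin down explicit witnesses among the columns), while the paper's version is shorter and avoids the combinatorial selection of an invertible minor. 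One small remark: since $\pi$ need not be a uniformizer of $R_q$, the diagonal entries of $D$ are powers of a uniformizer $\varpi$ rather than of $\pi$, so your $t_i$ should be taken in $\tfrac{1}{d}\Z$ (where $\pi R = \frakm^d$) rather than in $\Z$; this does not affect the argument.
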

		\begin{proof}
			Recall that the ``invariant factor theorem'' states that $\Psi$ can be put in diagonal form by applying finitely many of the following operations: (1) Permuting the rows or columns of $\Psi$, (2) Adding an $R_q$-multiple of one row of $\Psi$ to another, and (3) Given a column with one non-zero entry, adding an $R_q$-multiple of this column to clear other entries in the same row. It is well known that these operations do not affect the Hodge polygon of $\Psi$. On the other hand, each of these operations can only increase the column slopes $v_\pi \Psi(e_i)$. Since $\HP_\pi(\Psi) = \cHP_\pi(\Psi)$ when $\Psi$ is diagonal, this completes the proof.
		\end{proof}
		
		\begin{lemma}\label{l:newtonhodge}
			Let $V$ be a Banach space, and let $\psi:V \to V$ be a completely continuous operator. Then
			\begin{enumerate}
				\item	\label{i:HodgePolygon}$\HP_\pi(\psi)$ is the Newton polygon of a unique slope set.
				\item	\label{i:NewtonOverHodge}$\NP_\pi(\psi) \succeq \HP_\pi(\psi)$.
				\item	\label{i:terminalpt}If $V$ is finite-dimensional and $\det(\psi) \neq 0$, then $\NP_\pi(\psi)$ and $\HP_\pi(\psi)$ have the same terminal point.
			\end{enumerate}
		\end{lemma}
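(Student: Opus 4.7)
The plan is to treat the three parts independently, with parts (2) and (3) being essentially formal and part (1) requiring a modest technical estimate via an orthonormal basis.

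For part (2), I would reduce to the pointwise bound $v_\pi(c_n(\psi)) \geq v_\pi(\wedge^n\psi)$ for every $n \geq 0$. Since $c_n(\psi) = (-1)^n \Tr(\wedge^n\psi)$ and the trace on $V^* \hat{\otimes}_{K_q} V$ is given by $f \otimes v \mapsto f(v)$, the inequality $|f(v)|_\pi \leq \|f\|_\pi \|v\|_\pi$ together with the universal property of the completed tensor product gives $v_\pi(\Tr(\phi)) \geq v_\pi(\phi)$ for every completely continuous $\phi$. Applied to $\phi = \wedge^n \psi$, this is the desired pointwise inequality. Both $\NP_\pi(\psi)$ and $\HP_\pi(\psi)$ are the lower convex hulls of their respective defining points, so a pointwise inequality on those points transfers directly to $\NP_\pi(\psi) \succeq \HP_\pi(\psi)$.

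For part (1), my strategy is to fix an orthonormal basis $B = \{e_i : i \in I\}$ of $V$ and control $\HP_\pi(\psi)$ using the matrix $\Psi = \psi|B$. Complete continuity of $\psi$ forces the columns $\Psi_i$ to tend to zero, so $v_\pi(\Psi_i) \to \infty$; consequently $\cHP_\pi(\Psi)$ is a convex polygon whose slopes tend to infinity. Using that $\wedge^n B$ is an orthonormal basis of $\wedge^n V$, the operator valuation may be computed entrywise, and a Leibniz expansion of each minor gives
\begin{equation*}
v_\pi(\wedge^n \psi) = \inf_{|J| = |J'| = n} v_\pi(\det \Psi_{J,J'}) \geq \inf_{|J| = n} \sum_{j \in J} v_\pi(\Psi_j) = \cHP_\pi(\Psi)(n).
\end{equation*}
Passing to lower convex hulls, $\HP_\pi(\psi) \succeq \cHP_\pi(\Psi)$, so $\HP_\pi(\psi)$ inherits the property that its slopes tend to infinity; in particular it is the Newton polygon of a unique slope set.

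For part (3), the key observation is that when $\dim V = d$ the space $\wedge^d V$ is one-dimensional, and on this line $\wedge^d \psi$ acts as multiplication by $\det \psi$. Hence $v_\pi(\wedge^d \psi) = v_\pi(\det \psi)$, which is the terminal ordinate of $\HP_\pi(\psi)$. On the Newton polygon side, $c_d(\psi) = (-1)^d \Tr(\wedge^d \psi) = (-1)^d \det \psi$, and since $\wedge^n \psi = 0$ for $n > d$, the Fredholm determinant is a polynomial of degree $d$ whose leading coefficient has valuation $v_\pi(\det \psi)$. The nonvanishing hypothesis ensures both terminal ordinates are finite and equal. The main obstacle is the matrix-level estimate in (1): one must verify carefully that $\wedge^n B$ really is an orthonormal basis of the completed exterior power and that the operator valuation equals the entrywise infimum of its matrix, but this is essentially bookkeeping about $\pi$-adic completions and should not present any serious difficulty.
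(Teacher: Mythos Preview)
Your arguments for parts (2) and (3) are correct and essentially identical to the paper's: both use $v_\pi(\Tr(\wedge^n\psi)) \geq v_\pi(\wedge^n\psi)$ for (2), and both observe that the terminal point of each polygon is $(d, v_\pi\det\psi)$ for (3).

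For part (1), however, your claim that ``complete continuity of $\psi$ forces the columns $\Psi_i$ to tend to zero'' is false. Take the rank-one operator $\psi(x) = f(x)\,v$ on $c(I)$, where $v \in c(I)$ is nonzero and $f \in c(I)^* \cong b(I)$ is the functional with $f(e_j) = 1$ for all $j$. This $\psi$ is completely continuous (it has finite rank), yet $\psi(e_j) = v$ for every $j$, so the columns are constant and $\cHP_\pi(\Psi)$ is a single ray of finite slope, giving you no control. What complete continuity actually forces (this is Serre's criterion) is that the \emph{rows} of $\Psi$ tend to zero. The paper deals with this by passing to the transpose $\Psi'$, which is then tight in the sense of the preceding section, and noting that $\HP_\pi(\psi) = \HP_\pi(\Psi')$ since $v_\pi(\wedge^n\psi)$ is invariant under transposition of the matrix. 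Your Leibniz-expansion estimate then applies verbatim to $\Psi'$ and gives $\HP_\pi(\psi) \succeq \cHP_\pi(\Psi')$, whose slopes genuinely tend to infinity. So your overall strategy is the right one; you just need to transpose before invoking the column Hodge bound.
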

		\begin{proof}
			Let $B = \{e_i:i \in I\}$ be an orthonormal basis for $V$. Let $\Psi = \psi|B$ and let $\Psi'$ denote the transpose of $\Psi$. Then $\Psi'$ is tight, and $\HP_\pi(\psi) = \HP_\pi(\Psi')$. Claim \ref{i:HodgePolygon} then follows from the preceding discussion. Evidently $v_\pi \Tr \wedge^n \psi \geq v_\pi \wedge^n \psi$ for all $n$, proving \ref{i:NewtonOverHodge}. For \ref{i:terminalpt}, simply note that the terminal point of both polygons is $(n, v_\pi \det(\psi))$.
		\end{proof}
		
	\subsection{Iteration}\label{ss:iteration}
	
		Let $F$ denote the canonical Frobenius endomorphism of $\Z_q$. By base change, $F$ induces an $R$-linear endomorphism $F:R_q \to R_q$.
		
		\begin{definition}
			Let $V$ be a vector space over $K_q$. We say that a $K$-linear operator $\psi:V \to V$ is \emph{$F^{-1}$-linear} if
			\begin{equation*}
				\psi(F(r)x) = r\psi(x)
			\end{equation*}
			for all $r \in R$ and $x \in V$. In this case, the iterate $\psi_q = \psi^{v_p(q)}$ is a $K_q$-linear operator.
		\end{definition}
%
		
		\begin{definition}
			Let $V$ be a normed vector space over $K_q$. Let $B_q$ be a formal basis for $V$. An \emph{associated $K$-Basis} for $V$ is a formal basis for $V$ (as a normed vector space over $K$) of the form
			\begin{equation}\label{eq:assocbasis}
				B	=	\bigsqcup_{i = 0}^{v_p(q)-1} F^i(\xi) \otimes B_q,
			\end{equation}
			where $\xi \in \Z_q$ is any element such that the $F^i(\xi)$ generate $\Z_q$ as a $\Z_p$-module.
		\end{definition}
		
		Let $\psi:V \to V$ be an $F^{-1}$-linear operator. Via the partition (\ref{eq:assocbasis}), the matrix $\psi_q|B$ is block diagonal of the form
		\begin{equation*}
			\begin{bmatrix}
				\xi (\psi_q|B_q)	&	&	\\
					&	\ddots	&	\\
					&	&	F^{v_p(q)-1}(\xi)(\psi_q|B_q)
			\end{bmatrix}		.	
		\end{equation*}
		Consider the parameter $\pi_q = \pi^{v_p(q)}$. We have the following relation between the Newton polygons of the operators $\psi$ and $\psi_q$:
		
		\begin{lemma}\label{l:root}
			Let $V$ be a Banach space over $K_q$. Let $\psi:V \to V$ be a completely continuous $F^{-1}$-linear operator. Then $\psi_q$ is completely continuous as a $K_q$-linear operator and
			\begin{equation*}
				\NP_{\pi_q} (\psi_q)^{\times v_p(q)} = \NP_\pi (\psi).
			\end{equation*}
		\end{lemma}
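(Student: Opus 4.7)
The plan is to establish the identity
\[
\det(I - s\psi|V) = \det(I - s^{v_p(q)} \psi_q|V),
\]
where on the left we take the Fredholm determinant of $\psi$ regarded as a completely continuous $K$-linear operator on $V$ (viewed as a $K$-Banach space), and on the right the Fredholm determinant of $\psi_q$ as a $K_q$-linear operator. The Newton polygon statement will then follow formally. For the complete continuity of $\psi_q$, note that $\psi_q = \psi^{v_p(q)}$ is a composition of $K$-compact operators, hence compact as a $K$-linear map; since $\psi_q$ is $K_q$-linear and compactness is detected by the same notion of total boundedness on $V$, $\psi_q$ is automatically $K_q$-completely continuous.

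To prove the identity, set $a = v_p(q)$ and apply the trace formula $\det(I - s\phi) = \exp\bigl(-\sum_{n \geq 1} \Tr(\phi^n)s^n/n\bigr)$. It suffices to establish: (i) $\Tr_K(\psi^n|V) = 0$ whenever $a \nmid n$, and (ii) $\Tr_K(\psi^{ak}|V) = a \cdot \Tr_{K_q}(\psi_q^k|V)$ for $k \geq 1$. For (i), since $\psi^n$ is $F^{-n}$-semilinear, working in the associated $K$-basis $B = \bigsqcup_i F^i(\xi) \otimes B_q$ reduces $\Tr_K(\psi^n|V)$ to a sum of $K$-traces of operators on $K_q$ of the form ``multiplication by $c$ composed with $F^{-n}$'' for various $c \in K_q$; a Dedekind-style linear independence of characters argument yields the vanishing of such traces whenever $F^{-n} \neq \id$ on $K_q$, i.e., whenever $a \nmid n$. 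For (ii), when $a \mid n$ the operator $\psi^n$ is $K_q$-linear, so $\Tr_K(\psi^n|V) = \Tr_{K_q/K}(\Tr_{K_q}(\psi^n|V))$ by trace transitivity; a direct cyclic re-indexing of the products $c_{j\ell_1} F^{-1}(c_{\ell_1\ell_2})\cdots F^{-(n-1)}(c_{\ell_{n-1}j})$ computing $\Tr_{K_q}(\psi^n|V)$ in the basis $B_q$, using $F^a = \id$ on $K_q$, shows that this quantity is $F$-invariant and hence lies in $K$, so that $\Tr_{K_q/K}$ acts on it as multiplication by $a$.

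Given the identity, the Newton polygon claim is formal: factoring $\det(I - s\psi_q) = \prod_i (1 - \alpha_i s)$ over $\overline{K}_q$, we get $\det(I - s\psi) = \prod_i (1 - \alpha_i s^a)$. For each $i$, the factor $1 - \alpha_i s^a$ contributes $a$ Newton slopes of $\pi$-adic valuation $v_\pi(\alpha_i)/a = v_{\pi_q}(\alpha_i)$, matching precisely the slopes of $\NP_{\pi_q}(\psi_q)$ each repeated $a$ times. The main obstacle is the $F$-invariance of $\Tr_{K_q}(\psi_q^k|V)$: it rests on a cyclic-symmetry argument on the Frobenius-twisted products defining these traces, where one uses that such products are invariant under simultaneous cyclic permutation of indices and shift of Frobenius exponents modulo $F^a = \id$. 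Once this point is in hand, the rest of the proof is essentially bookkeeping with the trace formula.
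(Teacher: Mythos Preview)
Your argument is correct when $\mathrm{char}(K)=0$, but the paper explicitly allows $R$ (and hence $K$) to have characteristic $p$, and in that case your key tool---the logarithmic trace formula
\[
\det(I-s\phi)=\exp\Bigl(-\sum_{n\ge 1}\tfrac{\Tr(\phi^n)}{n}\,s^n\Bigr)
\]
---is not available: the exponential and the divisions by $n$ for $p\mid n$ make no sense, and more fundamentally the sequence $(\Tr(\phi^n))_{n\ge 1}$ does \emph{not} determine $\det(I-s\phi)$ in characteristic $p$. A concrete instance inside the setup of the lemma: take $a=v_p(q)=p$, $V=K_q$, and $\psi=F^{-1}$. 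Then $\Tr_K(\psi^n)=0$ for all $n\ge 1$ (for $p\nmid n$ by your own argument, and for $p\mid n$ because $\Tr_K(\mathrm{id}\mid K_q)=[K_q:K]=p=0$), yet $\det(I-s\psi)_K=1-s^p\neq 1$. So steps (i) and (ii), while individually valid as trace identities, do not combine to yield the determinant identity you want. The identity $\det(I-s\psi)_K=\det(I-s^{a}\psi_q)_{K_q}$ does in fact hold in all characteristics, but proving it requires a different mechanism.

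The paper sidesteps this by working directly with determinants rather than traces. Using the associated $K$-basis $B$, one sees that $\psi_q|B$ is block diagonal with $a$ copies of $\psi_q|B_q$, giving $\det(I-s\psi_q|B)=\det(I-s\psi_q|B_q)^a$; combined with the elementary factorization $\det(I-s^a\psi_q|B)=\prod_{\zeta^a=1}\det(I-\zeta s\psi|B)$ (the product taken with multiplicity, which is harmless in characteristic $p$), and the observation that $\NP_\pi(\zeta\psi)$ is independent of the unit $\zeta$, the Newton polygon statement follows. If you want to repair your approach in positive characteristic, the natural fix is to replace the trace formula by this determinant-level argument, or alternatively to prove directly that $\psi$ and $\zeta\psi$ are conjugate over a suitable extension (an eigenvector of $F$ with eigenvalue $\zeta$ furnishes the conjugating scalar), which yields $\det(I-\zeta s\psi)_K=\det(I-s\psi)_K$ without any appeal to traces. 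Your compactness argument for $\psi_q$ and the final Newton polygon bookkeeping are fine.
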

		\begin{proof}
			Note that $\psi_q$ is completely continuous as a $K$-linear operator. Let $B_q$ be an orthonormal $K_q$-basis for $V$, and let $B$ be an associated $K$-basis for $V$. From the above block matrix, we see that the rows of $\psi_q|B_q$ tend to $0$ in the finite-complement topology, thus $\psi_q$ is completely continuous as a $K_q$-linear operator. We see moreover that $\det(I-s\psi_q|B) = \det(I-s\psi_q|B_q)^{v_p(q)}$. It follows that
			\begin{equation*}
				\det(I-s^{v_p(q)}\psi_q|B_q)^{v_p(q)} = \det(I-s^{v_p(q)}\psi_q) = \prod_{\zeta^{v_p(q)}=1} \det(I-s\zeta\psi|B).
			\end{equation*}
			The Newton polygons of the matrices $\zeta \psi|B$ do not depend on $\zeta$. Thus,
			\begin{equation*}
				\NP_{\pi_q} (\psi_q|B_q)^{\times v_p(q)} = \NP_\pi \det(I-s^{v_p(q)}\psi_q|B_q) = \NP_\pi (\psi|B).
			\end{equation*}
			The claim follows from Lemma \ref{l:ccmatrix}.
		\end{proof}
		
\section{Geometric Setup}\label{s:geo}

	Fix a smooth projective curve $\overline{X}$ over $\F_q$ of genus $g$. Let $S$ be a finite set of closed points of $\overline{X}$ and let $X = \overline{X} \backslash S$. In this section we construct the global lifting $(A^\dagger,\sigma)$, which we will use to study abelian $L$-functions over $X$. We will give our constructions over the ring $\Z_p$, with $\pi = p$. Later, we will pass to general $R$ and $\pi$ by weak base change.
		
	\subsection{Mapping to \texorpdfstring{$\mathbb{P}_{\mathbb{F}_q}^1$}{P1}}\label{ss:mapping}
	
		The construction of our global lifting relies on an analogue of Belyi's theorem in characteristic $p$. For this we require:
		
		\begin{theorem}[Fulton \cite{FultonHurwitz}]\label{t:tame}
			After extending the base field, there exists a finite, separable, tamely ramified morphism $\eta_0:\overline{X} \to \PP_{\F_q}^1$.
		\end{theorem}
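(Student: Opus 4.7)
The plan is to establish this classical theorem by the generic projection argument originally due to Fulton. After a harmless extension of the base field, we may assume $\overline{X}$ has sufficiently many $\F_q$-rational points. Choose a very ample line bundle $L$ on $\overline{X}$ of sufficiently large degree, giving an embedding $\overline{X} \hookrightarrow \PP^N_{\F_q}$. For each $(N-2)$-dimensional linear subspace $\Lambda \subset \PP^N$ disjoint from $\overline{X}$, linear projection away from $\Lambda$ yields a finite morphism $\pi_\Lambda : \overline{X} \to \PP^1$; these form a family parametrized by an open subset $U$ of the Grassmannian $\mathrm{Gr}(N-2,N+1)$.

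The core claim is that for a generic $\Lambda \in U$, the morphism $\pi_\Lambda$ is separable with only \emph{simple} ramification, meaning every ramification index equals $2$. Since $p$ is odd throughout the paper, simple ramification is automatically tame, so we may take $\eta_0 = \pi_\Lambda$ for any such generic $\Lambda$, defined over a suitable finite extension of $\F_q$. The entire content of the theorem is therefore concentrated in this generic simplicity claim, which is a Bertini-type statement: one identifies the ``bad'' locus in $U$ (centers $\Lambda$ giving inseparability, multiple preimages of a ramification value, or a ramification index $\geq 3$) with secant-line and higher-osculation conditions on $\overline{X} \subset \PP^N$, and estimates the corresponding codimensions. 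For $L$ sufficiently ample, all such loci have positive codimension in $U$, so a generic $\Lambda$ avoiding them exists after a further finite extension of $\F_q$.

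The main obstacle is that in positive characteristic the classical reflexivity of the dual variety can fail when the Gauss map of $\overline{X} \subset \PP^N$ is inseparable (the ``strange curve'' phenomenon), so one cannot import the complex-analytic generic projection argument verbatim. Fulton's contribution is to verify that for smooth projective curves the pathological loci nevertheless occupy positive codimension in $U$, by working directly with the tangent and secant varieties of $\overline{X}$ and exploiting the ampleness of $L$ to control their dimensions. Once this codimension estimate is in place, the existence of $\eta_0$ over a finite extension of $\F_q$ is immediate.
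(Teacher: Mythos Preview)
The paper does not supply its own proof of this theorem; it is quoted directly from Fulton's paper \cite{FultonHurwitz} and used as a black box (with a remark noting the Sugiyama--Yasuda extension to $p=2$ and the Kedlaya--Litt--Witaszek refinement over the original base field). Your outline is a faithful summary of Fulton's actual argument: embed $\overline{X}$ via a sufficiently ample line bundle, project linearly from a generic center, and show that the bad loci (inseparability, coincident branch points, ramification index $\geq 3$) have positive codimension in the parameter space of centers, so that a generic projection has only simple ramification, which is tame when $p$ is odd.

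As a sketch this is correct, but be aware that the substance of Fulton's contribution lies precisely in the codimension estimates you allude to in the final paragraph. In positive characteristic the tangent variety can behave badly (e.g.\ all tangent lines through a single point), and ruling out higher-order ramification for a generic center is not automatic. Your proposal names this obstacle accurately but does not carry out the estimate; if you intend this as a self-contained proof rather than a pointer to \cite{FultonHurwitz}, that step would need to be filled in. Since the present paper treats the theorem as an external input, your level of detail is already more than what the paper provides.
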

		\begin{remark}
			In \cite{Sugiyama}, Sugiyama and Yasuda extend Fulton's result to the case $p = 2$. We have omitted this case for other reasons (see Remark \ref{r:2}). By a recent theorem of Kedlaya-Litt-Witaszek, $\eta$ exists even without extending the base field \cite{Kedlaya}.
		\end{remark}
		
		From now on we fix a choice of $\eta_0$ as in Theorem \ref{t:tame}. As in \cite[Theorem 9.3]{Kedlaya}, the existence of $\eta_0$ implies the existence of a tamely ramified map $\overline{X} \to \PP_{\F_q}^1$ with three ramified points. We shall need some control over the branching of this map, so we modify the construction slightly.
		
		By extending the base field, we assume that all branch points of $\eta_0$ and every point in $S$ is $\F_q$-rational. We assume moreover that $q$ is large enough so that there are two $\F_q$-rational points of $\PP_{\F_q}^1$ which are disjoint from the branch points of $\eta_0$ and the image of $S$. Label these points as $0$ and $\infty$, and let $1$ denote any other $\F_q$-rational point of $\PP_{\F_q}^1$. Consider the composition
		\begin{equation*}
			\eta_q:	\PP_{\F_q}^1	\xrightarrow{q-1}	\PP_{\F_q}^1	\to	\PP_{\F_q}^1	\xrightarrow{p-1}	\PP_{\F_q}^1	\to	\PP_{\F_q}^1.
		\end{equation*}
		Here, the first and third maps denote the $(q-1)$- and $(p-1)$-power maps, respectively. The second map is a linear transformation fixing $1$ and $\infty$ and sending $0$ to any other $\F_q$-rational point of $\PP_{\F_q}^1$. The final map is also a linear transformation which fixes $\infty$ and swaps $0$ with $1$. Then $\eta_q$ is ramified over $\{0,1,\infty\}$ and all branch points of $\eta_q$ are $\F_q$-rational. We have:
		
		\begin{proposition}
			The composite $\eta = \eta_q \circ \eta_0:\overline{X} \to \PP_{\F_q}^1$ is a tame Belyi map such that
			\begin{enumerate}
				\item	$\eta(P) = 0$ for each $P \in S$.
				\item	If $\eta(P) = 1$ then the ramification index of $\eta$ at $P$ is $p-1$.
			\end{enumerate}			
		\end{proposition}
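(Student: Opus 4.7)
The plan is to verify the claims by tracking ramification through the four factors $\eta_q = \phi_4 \circ \phi_3 \circ \phi_2 \circ \phi_1$ and composing with $\eta_0$. The power maps $\phi_1, \phi_3$ are ramified only at $\{0, \infty\}$, of indices $q-1$ and $p-1$ respectively, both coprime to $p$; the linear maps $\phi_2, \phi_4$ are $\PP^1$-automorphisms, hence unramified. Writing $\alpha = \phi_2(0) \in \F_q^\times \setminus \{1\}$ and $\beta = \phi_2^{-1}(0) = -\alpha/(1-\alpha)$, multiplicativity of ramification under composition shows that every ramification index of $\eta_q$ is a product of factors from $\{q-1, p-1\}$, hence coprime to $p$. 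Combined with the tameness of $\eta_0$ (Theorem \ref{t:tame}), this gives tameness of $\eta$.

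To pin down the branch locus of $\eta_q$, I would note that $z \in \PP^1$ is ramified in $\eta_q$ iff $z \in \{0,\infty\}$ (from $\phi_1$) or $\phi_2(\phi_1(z)) \in \{0,\infty\}$ (from $\phi_3$); the latter recovers $z \in \{0, \infty\}$ together with the $q-1$ solutions of $z^{q-1} = \beta$. Chasing these through $\eta_q$ yields three branch points: $\phi_4(\alpha^{p-1})$, $\phi_4(\infty)=\infty$, and $\phi_4(\phi_3(0)) = \phi_4(0) = 1$. To force the first into $\{0,1,\infty\}$ requires $\alpha^{p-1} = 1$, which is why $\alpha$ must be chosen in $\F_p^\times \setminus \{1\}$ (available since $p$ is odd); then $\phi_4(\alpha^{p-1}) = \phi_4(1) = 0$, so the branch points of $\eta_q$, and hence of $\eta$, are exactly $\{0, 1, \infty\}$.

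For (1), if $P \in S$ then by construction $\eta_0(P)$ is $\F_q$-rational and distinct from the points labeled $0, \infty$ in the intermediate $\PP^1$, so $\eta_0(P) \in \F_q^\times$ and $\phi_1(\eta_0(P)) = \eta_0(P)^{q-1} = 1$. Since $\phi_2$ fixes $1$ by design, $\phi_3(1) = 1$, and $\phi_4(1) = 0$, we get $\eta(P) = 0$. For (2), suppose $\eta(P) = 1$; the branch analysis above forces $\eta_0(P)^{q-1} = \beta$. Since $\beta = 1$ would require $\alpha = 1 - \alpha$, impossible in odd characteristic, we have $\beta \neq 1$, so $z^{q-1} = \beta$ has no $\F_q$-rational solutions (the $(q-1)$-th power map has image $\{1\}$ on $\F_q^\times$). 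Hence $\eta_0(P)$ is not $\F_q$-rational, so in particular is not among the (by construction, $\F_q$-rational) branch points of $\eta_0$; therefore $\eta_0$ is unramified at $P$. At $\eta_0(P)$ itself, $\phi_1$ is unramified (since $\eta_0(P) \neq 0, \infty$), $\phi_2$ is unramified, and $\phi_3$ is ramified of index $p-1$ over $\phi_2(\beta) = 0$; by multiplicativity, $e_\eta(P) = p-1$.

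The main difficulty is simply the combinatorial bookkeeping of chasing the distinguished points $0, 1, \infty$ through the four-step tower; the one genuinely arithmetic input is the constraint $\alpha^{p-1} = 1$ that aligns the image of the ramification point $0$ of $\phi_1$ with the set $\{0, 1, \infty\}$.
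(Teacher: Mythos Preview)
The paper states this proposition without proof, so your detailed verification is welcome and essentially correct. Your approach---tracking the composite $\eta_q = \phi_4\phi_3\phi_2\phi_1$ factor by factor and invoking multiplicativity of ramification indices---is the natural one, and your identification of the constraint $\alpha^{p-1}=1$ (i.e.\ $\phi_2(0)\in\F_p^\times\setminus\{1\}$) is a genuine point: as written, the paper's phrase ``any other $\F_q$-rational point'' is too loose, since without this condition the image $\phi_4(\alpha^{p-1})$ of the ramification point $0$ need not land in $\{0,1,\infty\}$.

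One small slip: you write that $\beta=1$ would force $\alpha=1-\alpha$, ``impossible in odd characteristic.'' In fact $\alpha=1-\alpha$ \emph{is} solvable when $p$ is odd (take $\alpha=1/2$). The correct computation from $\beta=-\alpha/(1-\alpha)$ is that $\beta=1$ forces $-\alpha=1-\alpha$, i.e.\ $0=1$, which is impossible in \emph{any} characteristic. So your conclusion $\beta\neq 1$ stands, but for a stronger reason than you state, and the appeal to odd characteristic at this particular step is misplaced (odd characteristic is genuinely needed only to ensure $\F_p^\times\setminus\{1\}\neq\emptyset$).
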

		
		For each $Q \in \{0,1,\infty\}$, let $r_Q$ denote the cardinality of the fiber $\eta^{-1}(Q)$. If $\eta(P) = Q$, then we let $e_P$ denote the ramification index of $P$ over $Q$. In particular, if $Q = 1$ then $e_P = p-1$. It follows that that $r_1(p-1) = \deg(\eta)$. The Riemann-Hurwitz formula gives
		\begin{equation}\label{eq:rh}
			2(g-1)+r_0+r_1+r_\infty = \deg(\eta).
		\end{equation}
	
	\subsection{The Global Lifting}\label{ss:global}
		
		Let $V = \PP_{\F_q}^1 \backslash \{0,1,\infty\}$. Let $u_0$ denote a parameter at $0$, and define $u_1 = u_0-1$, $u_\infty = u_0^{-1}$. The coordinate ring of $V$ is a generated by $u_0$, $u_\infty$, and $u_1^{-1}$ over $\F_q$. We consider the lifting
		\begin{equation}\label{eq:B}
			B	=	\Z_q[u_0,u_\infty,u_1^{-1}].
		\end{equation}
		We define $\sigma$ to be the unique Frobenius endomorphism of $B^\dagger$ satisfying $\sigma(u_0) = u_0^p$.
		
		Let $U = \eta^{-1}(V)$. Then $U$ is an affine open subset of $X$, which is finite \'etale over $V$. We define $S_\eta= X-V$. We will lift $U$ to characteristic $0$ rather than all of $X$, because the \'etale property of $\eta$ is essential for constructing our Frobenius endomorphism: First, a theorem of Elkik \cite{Elkik} states that there exists a smooth $\mathbb{Z}_q$-algebra $A$ such that $U = \Spec(A/pA)$. By \cite[Theorem 2.4.4]{Van}, there is a (necessarily unique) lifting of $\eta$ to an \'etale map $B^\dagger \to A^\dagger$.
		
		\begin{definition}
			The \emph{global lifting} is the ring $A^\dagger$ equipped with the unique extension of $\sigma$ along $\eta$.
		\end{definition}
		
		\begin{lemma}\label{l:finiteflat}
			The Frobenius $\sigma:A^\dagger \to A^\dagger$ is injective, and $A^\dagger$ is a finite projective $\sigma(A^\dagger)$-module of rank $p$.
		\end{lemma}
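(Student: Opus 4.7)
The plan is to establish injectivity via Lemma~\ref{l:red}(\ref{i:inj}), then bootstrap the rank-$p$ freeness of $B^\dagger$ over $\sigma(B^\dagger)$ to an analogous statement for $A^\dagger$ via uniqueness of étale liftings. I will in fact obtain the stronger statement that $A^\dagger$ is \emph{free} of rank $p$ over $\sigma(A^\dagger)$ with basis $\{1, u_0, \ldots, u_0^{p-1}\}$.

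For injectivity of $\sigma$, I would apply Lemma~\ref{l:red}(\ref{i:inj}) to $\sigma: A^\dagger \to A^\dagger$. Both hypotheses hold: since $A$ is smooth over $\Z_q$, $A^\dagger$ is $p$-torsion free, and the natural inclusion $A^\dagger \hookrightarrow A^\infty$ shows it is $p$-adically separated. Modulo $p$, $\bar\sigma$ extends the absolute Frobenius of $B/pB$ (this is immediate from $\sigma(u_0) = u_0^p$). Since $B/pB \to A/pA$ is étale, such an extension is unique; but the absolute Frobenius on $A/pA$ is also an extension, so $\bar\sigma$ must equal it. Absolute Frobenius is injective on $A/pA$ because $U$ smooth implies $A/pA$ reduced.

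For the module structure, I would first verify that $B^\dagger$ is $\sigma(B^\dagger)$-free of rank $p$ with basis $\{1, u_0, \ldots, u_0^{p-1}\}$. The ring $\sigma(B^\dagger)$ is the weak completion of $\Z_q[u_0^{\pm p}, (u_0^p-1)^{-1}]$, and generation follows from $u_0^{-1} = u_0^{p-1} \cdot u_0^{-p}$ together with the key identity
\[
(u_0-1)^{-1} = (u_0^p-1)^{-1} \cdot (1 + u_0 + \cdots + u_0^{p-1}).
\]
Linear independence is clear mod $p$. Via the ring isomorphism $\sigma$, the étale extension $B^\dagger \hookrightarrow A^\dagger$ is identified with $\sigma(B^\dagger) \hookrightarrow \sigma(A^\dagger)$, so the latter is étale as well. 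Set $E = B^\dagger \otimes_{\sigma(B^\dagger)} \sigma(A^\dagger)$; this is étale over $B^\dagger$ (base change of étale) and free of rank $p$ over $\sigma(A^\dagger)$ (base change of the freeness just established). Multiplication provides a natural map $E \to A^\dagger$ of étale $B^\dagger$-algebras; by uniqueness of étale liftings (\cite[Theorem 2.4.4]{Van}), it suffices to verify that this map is an isomorphism mod $p$.

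The mod-$p$ check is the main obstacle. It amounts to showing the multiplication map $B_0 \otimes_{B_0^p} A_0^p \to A_0$ is an isomorphism, which is precisely the Cartesian property of the absolute Frobenius square attached to the étale morphism $\eta: U \to V$. Because $\eta$ is étale, the relative Frobenius $F_{U/V}: U \to U \times_{V, F_V} V$ is an isomorphism, giving the ring identity $A_0 \otimes_{B_0, F_B} B_0 \cong A_0$ via $a \otimes b \mapsto a^p \phi(b)$. Using the isomorphisms $F_A: A_0 \cong A_0^p$ and $F_B: B_0 \cong B_0^p$ (injective because $A_0, B_0$ are reduced) to transport the left-hand side, this iso translates into the desired multiplication iso $A_0^p \otimes_{B_0^p} B_0 \cong A_0$. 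Combined with the étale lifting step, we obtain $A^\dagger \cong E$ as a $\sigma(A^\dagger)$-module, giving the free structure of rank $p$ and in particular finite projectivity.
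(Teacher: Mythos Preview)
Your argument is essentially correct and takes a more explicit route than the paper's. The paper simply invokes general results of Monsky--Washnitzer: injectivity from \cite[Theorem 3.2(3)]{MW}, finiteness from \cite[Theorem 6.2]{MW}, and flatness from \cite[Theorem 2.1(2)]{MW}. You instead exploit the concrete presentation of $B^\dagger$ and the \'etaleness of $\eta$, obtaining the stronger conclusion that $A^\dagger$ is \emph{free} over $\sigma(A^\dagger)$ with explicit basis $\{1, u_0, \dots, u_0^{p-1}\}$. The paper's route is shorter and applies uniformly to any smooth dagger algebra equipped with a Frobenius lift; yours is more self-contained and produces a basis that could be useful for explicit matrix computations later on.

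One imprecision worth flagging: the claim ``since $B/pB \to A/pA$ is \'etale, such an extension is unique'' is not correct as stated. An \'etale cover may have nontrivial deck transformations, and composing absolute Frobenius with any of them yields another ring endomorphism of $A/pA$ restricting to $F_{B/pB}$. The conclusion $\bar\sigma = F_{A/pA}$ is nonetheless true, for the simpler reason that $\sigma$ on $A^\dagger$ is by construction a lift of absolute Frobenius: this is what ``the unique extension of $\sigma$ along $\eta$'' in \S\ref{ss:global} means (the \'etale lifting theorem gives a unique lift \emph{with prescribed reduction}, and the prescribed reduction is $F_{A/pA}$), and it is exactly what is required for $(A^\dagger,\sigma)$ to be a flat lifting in the sense of \S\ref{ss:sigmamod}. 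With this fix your injectivity argument via Lemma~\ref{l:red}(\ref{i:inj}) goes through, and the rest of your module-structure argument is sound.
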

		\begin{proof}
			The first statement is \cite[Theorem 3.2(3)]{MW}. The second follows from that fact that $A^\dagger$ is finite \cite[Theorem 6.2]{MW} and flat \cite[Theorem 2.1(2)]{MW} over $\sigma(A^\dagger)$.
		\end{proof}
		
		Let $\Omega_{A^\dagger}$ denote the $A^\dagger$-module of continuous differentials of $A^\dagger/\Z_q$, and similarly for $B^\dagger$. Then $\Omega_{B^\dagger}$ is a free $B^\dagger$-module of rank $1$ generated by the differential $\tfrac{\D u_0}{u_0}$. Since $\eta:B^\dagger \to A^\dagger$ is \'etale, we have
		\begin{equation*}
			\Omega_{A^\dagger} = A^\dagger \otimes_{B^\dagger} \Omega_{B^\dagger} = A^\dagger \frac{\D u_0}{u_0}.
		\end{equation*}
		The Frobenius $\sigma$ induces an injective $\Z_q$-linear endomorphism $\sigma_1$ of $\Omega_{A^\dagger}$. Explicitly,
		\begin{equation*}
			\sigma_1 \left( a \frac{\D u_0}{u_0} \right) = \sigma(a) p \frac{\D u_0}{u_0}.
		\end{equation*}
		Thus $\sigma_1(\Omega_{A^\dagger}) = \Omega_{\sigma(A^\dagger)}$ is a free $\sigma(A^\dagger)$-module of rank $1$, generated by $p \tfrac{\D u_0}{u_0}$. By Lemma \ref{l:finiteflat}, there is a $\sigma(A^\dagger)$-linear \emph{trace map}
		\begin{equation*}
			\Tr:	A^\dagger	\to	\sigma(A^\dagger).
		\end{equation*}
		Given $a \in A^\dagger$, the trace $\Tr(a)$ may be identified with the coefficient of $X^{p-1}$ in the minimal polynomial $g_a(X)$ of $a$ over $\sigma(A^\dagger)$. Note that $g_a \equiv X^p-a^p \pmod{p}$, so that $\Tr(a)$ in fact lies in $p \sigma(A^\dagger)$. The trace map induces a $\sigma(A^\dagger)$-linear map $\Tr_1:\Omega_{A^\dagger} \to \sigma_1(\Omega_{A^\dagger})$. Explicitly,
		\begin{equation*}
			\Tr_1 \left( a \frac{\D u_0}{u_0} \right) = \Tr(a) \frac{\D u_0}{u_0} = \frac{\Tr(a)}{p} p \frac{\D u_0}{u_0}.
		\end{equation*}
		
		\begin{definition}
			The \emph{global $U_p$ operator} is the $\Z_p$-linear endomorphism
			\begin{equation*}
				U_p = \sigma_1^{-1} \circ \Tr_1: \Omega_{A^\dagger} \to \Omega_{A^\dagger}.
			\end{equation*}
		\end{definition}
		
		The global $U_p$ operator is a $p$-Dwork operator on $\Omega_{A^\dagger}$ and the iterate $U_q = U_p^{v_p(q)}$ is a $q$-Dwork operator. If we identify $\Omega_{A^\dagger}$ with $A^\dagger$ using the basis $\tfrac{\D u_0}{u_0}$, then $U_p$ is identified with the operator
		\begin{equation*}
			U_p = \frac{1}{p} \sigma^{-1} \circ \Tr:A^\dagger \to A^\dagger.
		\end{equation*}
		
	\subsection{Semi-Local Liftings}\label{ss:semilocal}
		
		For each $Q \in \{0,1,\infty\}$, the completion of $B/pB$ along $Q$ may be identified with the local field $F_Q = \F_q(\!(u_Q)\!)$. For $P\in \overline{X}$ lying above $Q$, let $F_P$ denote the completion of $A/pA$ along $P$. Then $F_P$ is a local field, and our assumptions on the branching of $\eta$ allow us to choose a uniformizer $t_P \in F_P$ such that $t_P^{e_P} = u_Q$. We have a Cartesian diagram:
		\begin{equation*}
			\begin{tikzcd}
				A/pA	\arrow[r]	&	\prod\limits_{P \in S_\eta}	F_P	\\
				B/pB	\arrow[u]	\arrow[r]	&	\prod\limits_Q	F_Q	\arrow[u]
			\end{tikzcd}.
		\end{equation*}
		
		For each $Q \in \{0,1,\infty\}$ we define $\calA_Q = \Z_q(\!(u_Q)\!)$. There is a natural map $B^\dagger \to \calA_Q^\dagger$ which is given by ``expansion in the parameter $u_Q$.'' Similarly, if $P$ is a point of $\overline{X}$ lying above $Q$, then we let $\calA_P = \Z_q(\!(t_P)\!)$. Since $A^\dagger$ is \'etale over $B^\dagger$, the natural map $A^\dagger \to F_P$ lifts uniquely to a map $A^\dagger \to \calA_P^\dagger$, which we regard as ``expansion in the parameter $t_P$.'' These maps fit into a lifted Cartesian diagram:
		\begin{equation}\label{eq:semilocal}
			\begin{tikzcd}
				A^\dagger	\arrow[r]	&	\prod\limits_{P \in S_\eta}	\calA_P^\dagger	\\
				B^\dagger	\arrow[u]	\arrow[r]	&	\prod\limits_Q	\calA_Q^\dagger	\arrow[u]
			\end{tikzcd}.
		\end{equation}
		
		For each $Q \in \{0,1,\infty\}$, the endomorphism $\sigma:B \to B$ restricts to an endomorphism of $\calA_Q$. Explicitly,
		\begin{align*}
			\sigma(u_0)	&=	u_0^p,	&	\sigma(u_\infty)	&=	u_\infty^p,	&	\sigma(u_1)	&=	(u_1+1)^p-1.
		\end{align*}
		If $P$ is a point of $\overline{X}$ lying above $Q$, then $\sigma$ extends uniquely along the \'etale map $\calA_Q^\dagger \to \calA_P^\dagger$ to an endomorphism of $\calA_P^\dagger$. Evidently, if $\eta(P) = 0$ or $\infty$ then $\sigma(t_P) = t_P^p$. The local Frobenius for $\eta(P)=1$ is more complicated:
		\begin{equation*}
			\sigma(t_P)	=	\sqrt[p-1]{\left(t_P^{p-1}+1\right)^p+1}.
		\end{equation*}
		
		\begin{definition}
			Let $\calA = \prod_{P \in S_\eta} \calA_P$. The \emph{semi-local lifting} is the ring $\calA^\dagger$ equipped with the Frobenius endomorphism $\sigma$, defined as the product of the local Frobenius endomorphisms described above.
		\end{definition}
		
		\begin{lemma}\label{l:finiteflat2}
			The Frobenius $\sigma:\calA^\dagger \to \calA^\dagger$ is injective, and $\calA^\dagger$ is a finite projective $\sigma(\calA^\dagger)$-module of rank $p$.
		\end{lemma}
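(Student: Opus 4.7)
The plan is to argue factor-by-factor and mimic the proof of Lemma \ref{l:finiteflat}. Since $\calA = \prod_{P\in S_\eta} \calA_P$ and both weak completion and the Frobenius $\sigma$ respect this product decomposition, we have $\calA^\dagger = \prod_P \calA_P^\dagger$ as rings equipped with Frobenius, so it suffices to establish both claims for each local factor $\calA_P^\dagger$ separately.

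For injectivity of $\sigma:\calA_P^\dagger\to\calA_P^\dagger$, the key observation is that by construction (see the explicit formulas in \S\ref{ss:semilocal}) we have $\sigma(t_P)\equiv t_P^p\pmod p$ in every case. Thus the reduction $\bar\sigma$ modulo $p$ is the absolute $p$-power Frobenius on $F_P = \F_q(\!(t_P)\!)$, which is injective because $F_P$ is a field. Since $\calA_P^\dagger$ is $p$-torsion free and $p$-adically separated (as a subring of the $p$-adic completion $\calA_P^\infty$), Lemma \ref{l:red}(\ref{i:inj}) yields injectivity of $\sigma$.

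To show that $\calA_P^\dagger$ is finite projective of rank $p$ over $\sigma(\calA_P^\dagger)$, the natural candidate basis is $\{1,t_P,\ldots,t_P^{p-1}\}$: its mod-$p$ reduction is a basis for $F_P$ over its subfield $F_P^p = \bar\sigma(F_P)$. I would study the $\sigma(\calA_P^\dagger)$-linear map
\begin{equation*}
\phi:\sigma(\calA_P^\dagger)^{\oplus p}\to\calA_P^\dagger,\qquad (a_0,\ldots,a_{p-1})\mapsto\sum_{i=0}^{p-1} a_i t_P^i,
\end{equation*}
whose reduction $\bar\phi$ modulo $p$ is an isomorphism by the previous sentence. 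Surjectivity of $\phi$ would then follow from Lemma \ref{l:red}(\ref{i:surj}), and injectivity from Lemma \ref{l:red}(\ref{i:inj}). Once $\phi$ is established as an isomorphism, $\calA_P^\dagger$ is free of rank $p$ (hence finite projective) over $\sigma(\calA_P^\dagger)$.

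The main obstacle is verifying the $p$-adic completeness hypothesis on the source $\sigma(\calA_P^\dagger)^{\oplus p}$ needed to invoke Lemma \ref{l:red}(\ref{i:surj}), since weak completions are not in general $p$-adically complete. The workaround is to construct the required lifts explicitly within a single growth-condition subspace $\calA_P^m(b)$ of \S\ref{ss:growth}: given $a \in \calA_P^m(b)$, inductively produce $a_i^{(j)} \in \sigma(\calA_P^\dagger)$ for $0 \leq i < p$ and $j \geq 0$ such that $a - \sum_{j=0}^N p^j \phi(a_0^{(j)}, \ldots, a_{p-1}^{(j)})$ lies in $p^{N+1} \calA_P^\dagger$, while controlling the growth so that all terms remain inside $\calA_P^m(b')$ for a fixed $b'$. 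Because $\calA_P^m(b')$ \emph{is} $p$-adically complete by construction, the convergent sum provides the desired preimage in $\calA_P^\dagger$, completing the argument.
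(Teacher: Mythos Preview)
Your injectivity argument via Lemma \ref{l:red}(\ref{i:inj}) is correct and amounts to the same thing the paper does (the paper simply cites the analogous step from Lemma \ref{l:finiteflat}).

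For the finite-projective part, the paper takes a different and cleaner route. Rather than working directly with $\calA_P^\dagger$, it invokes the Cartesian diagram
\[
\begin{tikzcd}
\sigma(\calA_P^\dagger) \arrow[r] & \calA_P^\dagger \\
\sigma(\calA_Q^\dagger) \arrow[u] \arrow[r] & \calA_Q^\dagger \arrow[u]
\end{tikzcd}
\]
coming from the \'etale map $\calA_Q^\dagger \to \calA_P^\dagger$ (where $Q = \eta(P)$). Finiteness of rank $p$ then follows by base change from the bottom row, and for $Q \in \{0,1,\infty\}$ the Frobenius on $\calA_Q^\dagger$ is given by an explicit formula in the single parameter $u_Q$, so finiteness of degree $p$ is checked directly. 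Flatness is handled separately by citing Monsky--Washnitzer, exactly as in Lemma \ref{l:finiteflat}.

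Your direct approach---exhibiting $\{1,t_P,\ldots,t_P^{p-1}\}$ as a $\sigma(\calA_P^\dagger)$-basis via Lemma \ref{l:red}---is plausible, and you correctly identify the completeness obstacle. The proposed workaround (carrying out the successive approximation inside a fixed $\calA_P^m(b')$) is reasonable in outline, but the growth control is not a formality: for $\eta(P)=1$ the Frobenius $\sigma(t_P) = \sqrt[p-1]{(t_P^{p-1}+1)^p-1}$ is complicated, and you would need to show that the preimages $c_i^{(j)}$ with $\sigma(c_i^{(j)}) = a_i^{(j)}$ stay in a fixed $\calA_P^{m'}(b'')$ so that the series $\sum_j p^j c_i^{(j)}$ actually converges in $\calA_P^\dagger$ (not just in $\calA_P^\infty$). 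This is exactly the kind of estimate the paper avoids by passing to $\calA_Q^\dagger$. So your approach is not wrong, but it trades a one-line base-change argument for a nontrivial growth estimate that you have not yet carried out.
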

		\begin{proof}
			The first statement follows exactly as in Lemma \ref{l:finiteflat}, as does the fact that $\calA^\dagger$ is flat over $\sigma(\calA^\dagger)$. It remains to show that $\calA^\dagger$ is finite over $\sigma(\calA^\dagger)$. To see this, note that if $\eta(P) = Q \in \{0,1,\infty\}$, then we have a Cartesian diagram:
			\begin{equation*}
				\begin{tikzcd}
					\sigma(\calA_P^\dagger)	\arrow[r]	&	\calA_P^\dagger	\\
					\sigma(\calA_Q^\dagger)	\arrow[u]	\arrow[r]	&	\calA_Q^\dagger	\arrow[u]	\\
				\end{tikzcd}
			\end{equation*}
			The lower horizontal map is finite of degree $p$, as can be seen explicitly from the local Frobenius structure at $Q$.
		\end{proof}
		
		As in the global situation, Lemma \ref{l:finiteflat2} implies there is a $\sigma(\calA^\dagger)$-linear \emph{trace map}
		\begin{equation*}
			\Tr:\calA^\dagger \to \sigma(\calA^\dagger).
		\end{equation*}
		As before, image of $\Tr$ lies in $p \sigma(\calA^\dagger)$. This allows us to extend $U_p$ to all of $\calA^\dagger$:
		
		\begin{definition}
			The \emph{semi-local $U_p$-operator} is defined to be the $\Z_p$-linear operator
			\begin{equation*}
				U_p	=	\frac{1}{p} \sigma^{-1} \circ \Tr:	\calA^\dagger	\to	\calA^\dagger.
			\end{equation*}
		\end{definition}
		
	\subsection{Semi-Local Decomposition}\label{ss:decomp}
	
		For local-to-global arguments, it will be convenient to have a description of the underlying $\Z_q$-module structrue of the lifting $A^\dagger$, analogous to the partial fraction decomposition of $B^\dagger$. For each point $P$ of $\overline{X}$ lying above $Q \in \{0,1,\infty\}$, we define
		\begin{equation}\label{eq:nP}
			\mu(P)	=	\begin{cases}
					0	&	Q = 0\text{ or }1	\\
					p-1	&	Q = 1
				\end{cases}.
		\end{equation}
		Consider the $\Z_q$-submodule of $\calA_P$ consisting of $\mu(P)$-truncated series
		\begin{equation}\label{eq:NP}
			\calA_P^\tr	=	\left\{	\sum_k a_k t^{-k} \in \calA_P	:	a_k = 0\text{ for all }k \leq \mu(P)	\right\}.
		\end{equation}
		We define $\calA^\tr = \bigoplus_P \calA_P^\tr$, regarded as a $\Z_q$-submodule of $\calA$. There is a natural projection map $\pr:\calA \to \calA^\tr$. Since $\calA^\tr$ is a $p$-saturated submodule of $\calA$, the $p$-adic completion $\calA^{\infty,\tr}$ is naturally a $\Z_q$-submodule of $\calA^\infty$. We define $\calA^{\dagger,\tr} = \calA^\dagger \cap \calA^{\infty,\tr}$. 
		
		\begin{proposition}\label{p:semilocal}
			There is an exact sequence of $\Z_q$-modules
			\begin{equation*}
				0	\to	L	\to	A^\dagger	\xrightarrow{\pr}	\calA^{\dagger,\tr}	\to	0,
			\end{equation*}
			where $L$ is a finite \emph{free} $\Z_q$-module of rank
			\begin{equation}\label{eq:rank}
				N	=	g-1+r_0+r_1+r_\infty.
			\end{equation}
		\end{proposition}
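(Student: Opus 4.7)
The strategy is twofold: first verify the exact sequence modulo $p$ via a cohomological Riemann-Roch argument on $\overline{X}$, then lift to the overconvergent setting $A^\dagger$ using a growth-controlled $p$-adic iteration. Set $D = \sum_{P \in S_\eta} \mu(P) P = (p-1) \sum_{\eta(P) = 1} P$, so that $\deg D = (p-1) r_1 = \deg \eta$. By Riemann-Hurwitz (\ref{eq:rh}), $\deg D - (2g-2) = r_0 + r_1 + r_\infty \geq 3$, hence $H^1(\overline{X}, \calO(D)) = 0$. The sheaf sequence
\begin{equation*}
0 \to \calO_{\overline{X}}(D) \to j_* \calO_U \to \mathcal{Q} \to 0
\end{equation*}
on $\overline{X}$, where $\mathcal{Q}$ is the skyscraper sheaf supported on $S_\eta$ with $\mathcal{Q}_P = \bigoplus_{k > \mu(P)} \F_q \cdot t_P^{-k}$, yields a short exact sequence
\begin{equation*}
0 \to H^0(\overline{X},\calO(D)) \to A/pA \xrightarrow{\overline{\pr}} \calA^\tr /p\calA^\tr \to 0,
\end{equation*}
and Riemann-Roch gives $\dim_{\F_q} H^0(\overline{X},\calO(D)) = \deg D + 1 - g = N$.

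Next, I would prove surjectivity of $\pr$ by $p$-adic successive approximation. Given $g \in (\calA^{\dagger,\tr})^m(b)$, its mod-$p$ reduction is a finite polar part of bounded pole order. By the Riemann-Roch sequence, it lifts to some $\overline{f}_0 \in A/pA$; choosing a polynomial representative $f_0 \in A$, one gets $f_0 \in A^m(b + C_0)$ for a constant $C_0$. The error $\pr(f_0) - g$ is divisible by $p$; writing it as $-p g_1$ with $g_1 \in (\calA^{\dagger,\tr})^m(b + C)$ for a suitable fixed $C$, iterate to produce $f_0, f_1, \ldots \in A^m(b + C)$ and conclude that $f = \sum_i (-p)^i f_i$ converges $p$-adically in the complete submodule $A^m(b + C) \subseteq A^\dagger$ with $\pr(f) = g$.

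For the kernel $L$, Lemma \ref{l:red}(\ref{i:ker}) identifies $L/pL$ with $H^0(\overline{X},\calO(D))$, of dimension $N$ over $\F_q$. Choose $\overline{e}_1, \ldots, \overline{e}_N$ representing a basis, lift each to some $e_i^{(0)} \in A$, and use surjectivity to correct: write $\pr(e_i^{(0)}) = p h_i$ and choose $f_i \in A^\dagger$ with $\pr(f_i) = h_i$; set $e_i := e_i^{(0)} - p f_i \in L$. Then $L = \bigoplus_i \Z_q e_i$ by the usual iteration: for any $f \in L$, the difference $f - \sum_i c_{i,0} e_i$ is divisible by $p$ in $A^\dagger$ and stays in $L$ (since $\calA^{\dagger,\tr}$ is $p$-torsion free), and by Lemma \ref{l:Amb} the successive quotients $f^{(N)} \in A^m(b + Nm)$ satisfy $p^N f^{(N)} \in A^m(b)$, which tends to $0$ $p$-adically. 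Hence $f = \sum_i c_i e_i$ for unique $c_i \in \Z_q$, with linear independence descending from the mod-$p$ basis.

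The main obstacle is the growth-controlled lifting at the heart of the surjectivity step: one must verify that mod-$p$ lifts of polar parts can always be chosen in $A^m(b + C_0)$ with $C_0$ uniform in $g$, and that the iteration stays within $A^m(b + C)$ for a fixed $C$ independent of the iteration index. This amounts to showing the expansion maps $A^\dagger \to \calA_P^\dagger$ preserve $(m,b)$-growth up to a shift depending only on the ramification indices $e_P$, which in turn requires carefully relating the weak completion of $A$ to the semi-local decomposition afforded by $\eta$.
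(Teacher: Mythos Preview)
Your mod-$p$ computation via Riemann--Roch with the divisor $D = (p-1)\sum_{\eta(P)=1} P$ matches the paper exactly, including the rank count $N = \deg D + 1 - g$.

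Where your approach diverges is in the lift to the overconvergent level. You propose a growth-controlled successive approximation directly in $A^\dagger$, and you correctly identify the obstacle: one must show that mod-$p$ lifts of polar parts can be chosen with \emph{uniformly} bounded growth, independent of the iteration step. This is not obvious from the definitions in \S\ref{ss:growth}, and you do not actually carry it out. The paper bypasses this difficulty entirely. It first works with the full $p$-adic completion $A^\infty$, where surjectivity of $\pr:A^\infty \to \calA^{\infty,\tr}$ follows immediately from Lemma~\ref{l:red}(\ref{i:surj}) (completeness, separatedness, and mod-$p$ surjectivity are all trivial to check). It then observes that the kernel $L$ already lies in $A^\dagger$, since elements of $L$ have poles of bounded order. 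Finally, to pass from $A^\infty$ to $A^\dagger$, the paper uses the identity $A^\dagger = \calA^\dagger \cap A^\infty$: given $a \in \calA^{\dagger,\tr}$, any preimage $f \in A^\infty$ differs from $a$ (viewed in $\calA^\dagger$) by an element of $\ker(\pr)$, which has bounded poles and is therefore overconvergent; hence $f \in \calA^\dagger \cap A^\infty = A^\dagger$.

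The paper's route is both shorter and avoids the growth-tracking that you flag as unresolved. Your explicit construction of a free basis for $L$ is also heavier than necessary: once $L$ is identified as the kernel at the $\infty$-level, it is $p$-adically complete and $p$-torsion free with $L/pL$ of dimension $N$, so topological Nakayama gives freeness directly.
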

		\begin{proof}
			First, consider the map $\pr:A^\infty \to \calA^{\infty,\tr}$. Any element of the kernel $L$ has poles of finite order about each $P$, so that $L \subseteq A^\dagger$. Consider the reduction modulo $p$:
			\begin{equation*}
				\overline{L}	\to	\overline{A}	\xrightarrow{\overline{\pr}}	\overline{\calA}^\tr.
			\end{equation*}
			By the Riemann-Roch theorem, $\overline{\pr}$ is surjective. The kernel is precisely the global sections of the line bundle $\mathcal{L}(D)$, where $D = \sum_{\eta(P)=1}(p-1)P$. Again by Riemann-Roch, this is a finite $\F_q$-module of rank $N$. By Lemma \ref{l:red}(\ref{i:surj})-(\ref{i:ker}), the map $\pr:A^\infty \to \calA^{\infty,\tr}$ is surjective and $\ker(\overline{\pr})=\overline{L}$. Since $L$ is $p$-torsion free, Nakayama's lemma implies that $L$ is a finite free $\Z_q$-module of rank $N$.
			
			It remains to show that $\pr:A^\dagger \to \calA^{\dagger,\tr}$ is surjective. Observe that $a \in \calA^{\infty,\tr}$ lies in $\calA^{\dagger,\tr}$ if and only if there exists $\hat{a} \in \calA^\dagger$ such that $\pr(\hat{a}) = a$. By the preceding discussion, we may choose $\hat{a}$ to lie in the image of $A^\infty$. Since $A^\dagger = \calA^\dagger \cap A^\infty$, this completes the proof. 
		\end{proof}

\section{\texorpdfstring{$\sigma$}{Sigma}-Modules}\label{s:trace}
	
	\subsection{Definitions}\label{ss:sigmamod}
	
		For this subsection, let $X$ denote a general affine $\F_p$-scheme. By a \emph{flat lifting} of $X$ over $R$, we will mean a pair $(A,\sigma)$, where $A$ is a lifting of the coordinate ring of $X$ to a flat $R$-algebra, and $\sigma:A \to A$ is a lifting of the absolute Frobenius endomorphism of $X$.
		
		\begin{definition}
			A \emph{$\sigma$-module} over $A$ is a pair $(M,\phi)$, where $M$ is a finite projective $A$-module and $\phi:\sigma^*M \to M$ is an $R$-linear map such that $K \otimes \phi$ is an isomorphism. We say that $(M,\phi)$ is \emph{unit-root} if $\phi$ is an isomorphism.
		\end{definition}
	
		Let $(M,\phi)$ be a $\sigma$-module over $A$. Let $(B,\tau)$ be a flat lifting of another affine $\F_p$-scheme $Y$. Given a \emph{Frobenius-compatible} map $f:A \to B$ of $R$-algebras, we obtain by extension of scalars a $\tau$-module $(B \otimes_A M,\tau \otimes \phi)$ over $B$.
		
		\begin{theorem}[\cite{Katz}, 4.1.1]\label{t:Katz}
			There is a rank-preserving equivalence between the category of unit-root $\sigma$-modules over $A^\infty$ and the category of $R$-valued representations of $\pi_1(X)$. Under this equivalence, the pullback of representations along $Y \to X$ corresponds to extension of scalars along the Frobenius-compatible map $f:A^\infty \to B^\infty$.
		\end{theorem}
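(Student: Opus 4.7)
The plan is to follow Katz's strategy in \cite{Katz}: reduce to finite-level statements modulo $\pi^n$, construct the two functors explicitly via Artin-Schreier-Witt theory, and then pass to inverse limits. Throughout, the unit-root hypothesis (that $\phi$ is an isomorphism of $M$, not just of $K \otimes M$) will be essential. Since both sides of the desired equivalence are $\pi$-adically complete in the appropriate sense, it suffices to establish a compatible system of rank-preserving equivalences between unit-root $\sigma$-modules over $A^\infty / \pi^n$ and continuous representations $\pi_1(X) \to \GL_r(R / \pi^n R)$ for each $n \geq 1$.

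For the forward functor, given a unit-root $(M, \phi)$ of rank $r$ over $A^\infty/\pi^n$, I would fix a trivialization $M \simeq (A^\infty/\pi^n)^r$ under which $\phi$ is represented by a matrix $U \in \GL_r(A^\infty / \pi^n)$, and then form the scheme $T_M$ of solutions to $U x = x^{(p)}$, where $x^{(p)} = (\sigma(x_1), \dots, \sigma(x_r))$. The key technical claim will be that the projection $T_M \to X$ is a finite étale Galois cover with group $\GL_r(R/\pi^n R)$: finiteness is direct from the defining equations, while étaleness follows from the invertibility of the Jacobian, which is precisely the unit-root hypothesis. The monodromy action on the fibers of $T_M$ then yields the desired representation of $\pi_1(X)$, and passing to the inverse limit in $n$ gives a continuous $R$-valued representation.

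For the inverse functor, given $\rho : \pi_1(X) \to \GL_r(R/\pi^n R)$, I would first choose a finite étale Galois cover $Y \to X$ trivializing $\rho$ and then a Frobenius-compatible flat lifting $(B, \tau)$ of $Y$; such a lifting exists by the infinitesimal lifting property of étale morphisms (compare \cite[Theorem 2.4.4]{Van}). The free $B/\pi^n$-module $(B/\pi^n)^r$ carries both a tautological $\tau$-module structure and a diagonal deck-group action via $\rho$, and faithfully flat descent along $B/\pi^n \to A^\infty/\pi^n$ yields the required unit-root $\sigma$-module. Independence from the choice of trivializing cover follows by passing to common refinements.

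The two constructions are mutually inverse: for any $(M, \phi)$, the cover $T_M$ manifestly trivializes the representation it defines, so descending back recovers $(M, \phi)$, and similarly in the other direction. Naturality under Frobenius-compatible maps $f : A \to B$ is immediate since $T_{B \otimes_A M}$ is the base change of $T_M$ along $\Spec(B/\pi) \to \Spec(A/\pi)$. The main obstacle I expect is the verification that $T_M$ is indeed finite étale with the claimed Galois structure: this is where the unit-root hypothesis is used crucially, and the proof amounts to analyzing the equations $U x - x^{(p)} = 0$ and identifying the resulting scheme with the explicit Artin-Schreier-Witt torsor attached to $(M, \phi)$. Once this is in place the remainder of the argument is formal.
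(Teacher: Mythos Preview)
The paper does not prove this theorem: it is stated with the attribution \cite[4.1.1]{Katz} and then used as a black box. There is no argument in the paper to compare your proposal against.

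That said, your outline is a faithful sketch of Katz's original approach, and would serve as a proof if fleshed out. One point of imprecision: the scheme $T_M$ of solutions to $U x = x^{(p)}$ is a $\GL_r(R/\pi^n R)$-torsor over $X$, but it is typically not connected, so calling it a ``Galois cover with group $\GL_r(R/\pi^n R)$'' is slightly off. What one actually obtains is an action of $\pi_1(X)$ on the fiber of $T_M$ over a geometric point, and this fiber is a principal homogeneous space for $\GL_r(R/\pi^n R)$; choosing a basepoint in the fiber then yields the homomorphism $\pi_1(X) \to \GL_r(R/\pi^n R)$. This is what you mean, but the phrasing should be adjusted. Everything else in your plan is standard and correct.
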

		
		Let $(M,\phi)$ be a $\sigma$-module over $A$. By composition with the natural map $M \to \sigma^* M$, we will usually regard $\phi$ as an endomorphism of $M$ which is $\sigma$-semilinear, in the sense that $\phi(a m) = \sigma(a) \phi(m)$ for all $a \in A$ and all $m \in M$. We will typically restrict our attention to the case that $M$ is a \emph{free} $A$-module of rank $1$. If $e \in M$ is any basis for $M$ and $E = \phi(e)$ is the ``matrix'' of $\phi$ with respect to $E$, then we have an isomorphism of $\sigma$-modules
		\begin{equation*}
			(M,\phi)	\cong	(A,E \circ \sigma),
		\end{equation*}
		In this case, we refer to $E$ as a \emph{Frobenius structure} for $(M,\phi)$.
		
		For each $x \in |X|$, let $\deg(x) = [k(x):\F_q]$. Consider the flat lifting $(R(x),F)$ of $k(x)$ over $R$, where $R(x) =  R \otimes_{\Z_p} W(k(x))$ and $F$ is the canonical $R$-linear lifting of Frobenius. By \cite[Theorem 3.3]{Monsky} there is a unique Frobenius-compatible map
		\begin{equation*}
			\hat{x}:	A	\to	R(x),
		\end{equation*}
		which we call the \emph{Teichm\"uller lifting} of $x$. Let $(M_x,\phi_x)$ denote the $F$-module over $R(x)$ obtained by extension of scalars along $\hat{x}$. The map $\phi_x$ is only $R$-linear, but the iterate $\phi_x^{v_p(q)\deg(x)}$ is $R(x)$-linear. Since $R(x)$ is local, $M_x$ is free of finite rank and we may define:
		
		\begin{definition}
			The \emph{$L$-function} of $(M,\phi)$ is the power series
			\begin{equation*}
				L(\phi,s)	=	\prod_{x \in |U|} \frac{1}{\det(1-\phi_x^{v_p(q)\deg(x)} s^{\deg(x)})} \in	1+sR[[s]].
			\end{equation*}
		\end{definition}
		
	\subsection{\texorpdfstring{$\sigma$}{Sigma}-Modules over the Global Lifting}
	
		We will now return to the setting of \S \ref{s:geo}. Let $(A^\dagger,\sigma)$ denote the global lifting of $U$ as defined in \S \ref{ss:global}. Let $A_\pi = R \otimes_{\Z_p} A$, which we equip with the $\pi$-adic topology. By weak base change, we obtain a flat lifting $(A_\pi^\dagger,\sigma)$ of $U$ over $R$. Let $\rho:\pi_1(X) \to R^\times$ be a continuous character. Let $\rho_U$ denote the restriction of $\rho$ to $\pi_1(U)$, and let $(M,\phi)$ be the corresponding unit-root $\sigma$-module over $A_\pi^\infty$. The $L$-function $L(\phi,s)$ coincides with the Artin $L$-function $L(\rho_U,s)$ of $\rho_U$ over $U$. In other words,
		\begin{equation}\label{eq:ArtinLFunction}
			\prod_{\substack{x \in |X - U|\\x \notin S}} (1-\rho(\Frob_x) s)\cdot L(\rho,s)  = L(\phi,s).
		\end{equation}
	
		\begin{definition}\label{d:oc}
			We say that $(M,\phi)$ is \emph{overconvergent} if $(M,\phi)$ is obtained by extension of scalars from a $\sigma$-module over $A_\pi^\dagger$.
		\end{definition}
		
		If $(M,\phi)$ is overconvergent, then $L(\phi,s)$ is known to be meromorphic by the Monsky trace formula. Let us state the trace formula in the special case that $M$ is free of rank $1$. First, observe that $(M,\phi)$ is overconvergent if and only if $(M,\phi)$ admits an overconvergent Frobenius structure $E \in A_\pi^\dagger$. By weak base change, the operator $U_p$ induces an $R$-linear endomorphism $U_p:A_\pi^\dagger \to A_\pi^\dagger$. Upon fixing a choice of $E$, we define ``Dwork operator''
	 	\begin{equation*}
	 		\Theta = U_p \circ E:A_\pi^\dagger \to A_\pi^\dagger.
	 	\end{equation*}
	 	Let $V_\pi = K \otimes_R A_\pi$, regarded as a vector space over $K_q$. By \cite[Theorem 2.1]{Monsky}, the action of $\Theta_q$ on $V_\pi^\dagger$ is nuclear. We can now state Monsky's trace formula:
		
		\begin{theorem}[\cite{Monsky}, or \cite{Taguchi} when $\mathrm{char}(R) = p$]\label{t:Monsky}
			Suppose that $(M,\phi)$ is overconvergent and that $M$ is free of rank $1$. Then
			\begin{equation*}
				L(\phi,s)	=	\frac{C(\Theta_q|V_\pi^\dagger,s)}{C(\Theta_q|V_\pi^\dagger,qs)}.
			\end{equation*}
		\end{theorem}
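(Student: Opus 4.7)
The plan is to follow Monsky's classical strategy: convert the claimed identity of meromorphic series into an infinite family of trace identities via logarithmic expansion, and then establish those trace identities by a Dwork-style localization at Teichm\"uller points.

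First, taking logarithms of the Euler product and using that $M$ has rank $1$, I would derive
\[
\log L(\phi,s) \;=\; \sum_{n \geq 1} \frac{s^n}{n} S_n, \qquad S_n \;=\; \sum_{y \in U(\mathbb{F}_{q^n})} E^{(n v_p(q))}(\hat y),
\]
where $E^{(m)} := E \cdot \sigma(E) \cdots \sigma^{m-1}(E) \in A_\pi^\dagger$ is the iterated Frobenius structure and $\hat y$ denotes the Teichm\"uller lift of $y$. The rewriting uses the standard conversion of a sum over closed points of degree dividing $n$ into a sum over $\mathbb{F}_{q^n}$-valued points, together with the fact that in the rank-one case $\Tr(\phi_x^{v_p(q)\deg(x)})$ is just the scalar value of the iterated Frobenius structure at the Teichm\"uller lift of $x$.

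On the Fredholm side, applying the identity $\log C(\Psi,s) = -\sum_{n \geq 1} \frac{s^n}{n} \Tr(\Psi^n)$ to $\Theta_q$ acting on $V_\pi^\dagger$, evaluated at $s$ and $qs$, gives
\[
\log\!\left(\frac{C(\Theta_q|V_\pi^\dagger, s)}{C(\Theta_q|V_\pi^\dagger, qs)}\right) \;=\; \sum_{n \geq 1} \frac{s^n}{n} (q^n - 1)\, \Tr(\Theta_q^n|V_\pi^\dagger).
\]
Comparing coefficients reduces the theorem to proving, for every $n \geq 1$, the Dwork-type trace identity $(q^n - 1)\, \Tr(\Theta_q^n|V_\pi^\dagger) = S_n$. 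Using the relations $U_p \circ \sigma = \mathrm{id}$ and $E \cdot U_p = U_p \cdot \sigma(E)$ on $A_\pi^\dagger$, a routine induction rewrites $\Theta_q^n = U_p^{n v_p(q)} \circ E^{(n v_p(q))}$, so the target identity becomes the one-dimensional Monsky--Reich formula
\[
(p^k - 1)\, \Tr\big(U_p^{k} \circ f \,\big|\, V_\pi^\dagger\big) \;=\; \sum_{\sigma^{k}(\hat y) = \hat y} f(\hat y)
\]
for $f = E^{(n v_p(q))}$ and $k = n v_p(q)$. One proves this by localizing the trace computation at each Teichm\"uller lift, exploiting that $U_p$ is a $\pi$-adic contraction (resting on the finite flatness statements Lemmas \ref{l:finiteflat} and \ref{l:finiteflat2} that underpin the definition of $U_p$); the factor $(p^k - 1)$ emerges from the Jacobian determinant of a local uniformizer at each fixed point.

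The main obstacle is establishing this last trace identity in the present overconvergent setting. The delicate issues are (i) showing that the sum of local contributions at Teichm\"uller points converges and reproduces the global trace of $U_p^k \circ f$, which requires exploiting the weak completeness of $A_\pi^\dagger$ and the complete continuity of $\Theta_q$ guaranteed by \cite[Theorem 2.1]{Monsky}, and (ii) handling the equicharacteristic case $\mathrm{char}(R) = p$, where the standard $p$-adic contraction argument fails and must be replaced by Taguchi's $\pi$-adic refinement \cite{Taguchi}. Once this identity is in hand, exponentiating the matching power-series expansions from the first two steps yields the theorem.
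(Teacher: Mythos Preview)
The paper does not prove this statement itself: the theorem is stated with a citation to \cite{Monsky} (and to \cite{Taguchi} in the equicharacteristic case) and no proof environment follows. So there is no ``paper's own proof'' to compare against beyond the bare reference.

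Your proposal is a faithful and essentially correct outline of Monsky's argument as it appears in \cite{Monsky}: reduce the identity of power series, via logarithmic expansion, to the family of trace identities $(q^n-1)\Tr(\Theta_q^n|V_\pi^\dagger) = S_n$; rewrite $\Theta_q^n$ as $U_p^{nv_p(q)}\circ E^{(nv_p(q))}$ using the $\sigma^{-1}$-linearity of $U_p$; and then establish the resulting Dwork--Reich fixed-point formula by localizing at Teichm\"uller lifts, using the contracting nature of $U_p$. You also correctly flag that the equicharacteristic case requires Taguchi's $\pi$-adic adaptation. One minor point: the relation you write as ``$E\cdot U_p = U_p\cdot\sigma(E)$'' is the wrong way round for a $\sigma^{-1}$-linear operator (it should read $U_p\circ\sigma(a) = a\circ U_p$, whence $(U_p\circ E)^m = U_p^m\circ E^{(m)}$), but the conclusion you draw from it is the correct one. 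In short, you have reproduced the skeleton of the cited proof, whereas the paper itself is content to invoke the reference.
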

		
		\begin{corollary}
			\label{c:Monsky for representations}
			Let $\rho:\pi_1(X) \to R^\times$ be a continuous character and let $(M,\phi)$ be the
			unit-root $\sigma$-module over $A_\pi^\dagger$ corresponding to $\rho_U$. Assume that $(M,\phi)$ is overconvergent and that $M$ is free of rank $1$. Then
			\begin{equation*}
			\prod_{\substack{x \in |X - U|\\x \notin S}} (1-\rho(\Frob_x) s)\cdot L(\rho,s)  =\frac{C(\Theta_q|V_\pi^\dagger,s)}{C(\Theta_q|V_\pi^\dagger,qs)}.
			\end{equation*}
		\end{corollary}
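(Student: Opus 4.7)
The corollary is essentially a direct consequence of combining two results that have already been established in the excerpt. My plan is therefore to observe that all the work has been done and to assemble the pieces.

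First, I would invoke Theorem \ref{t:Monsky} (Monsky's trace formula) applied to the overconvergent rank-one unit-root $\sigma$-module $(M,\phi)$ corresponding to $\rho_U$. This gives directly
\begin{equation*}
L(\phi,s) = \frac{C(\Theta_q|V_\pi^\dagger,s)}{C(\Theta_q|V_\pi^\dagger,qs)}.
\end{equation*}
The hypothesis that $(M,\phi)$ is overconvergent and free of rank $1$ ensures a Frobenius structure $E \in A_\pi^\dagger$ exists, so the Dwork operator $\Theta = U_p \circ E$ and its iterate $\Theta_q$ are well-defined and nuclear on $V_\pi^\dagger$ as needed.

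Second, I would invoke the comparison \eqref{eq:ArtinLFunction} between the $L$-function of the $\sigma$-module and the Artin $L$-function,
\begin{equation*}
\prod_{\substack{x \in |X - U|\\x \notin S}} (1-\rho(\Frob_x) s)\cdot L(\rho,s)  = L(\phi,s),
\end{equation*}
which holds because $U \subseteq X$ is the étale locus of $\eta$ and the Euler factors at points of $X \setminus U$ (not in $S$) are unramified and directly computable from $\rho(\Frob_x)$. Combining the two displayed identities immediately yields the stated equation.

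There is essentially no obstacle here: both inputs are already in hand, and the corollary is a one-line composition. The only thing to keep straight is that $L(\phi,s)$ is defined as an Euler product over $|U|$ while $L(\rho,s)$ is an Euler product over $|X|$, so the discrepancy is exactly the finite product of Euler factors at $x \in |X - U|$ with $x \notin S$ appearing on the left-hand side, which matches \eqref{eq:ArtinLFunction} on the nose.
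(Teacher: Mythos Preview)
Your proposal is correct and matches the paper's approach exactly: the corollary is stated without proof in the paper, being an immediate consequence of substituting the identity \eqref{eq:ArtinLFunction} into the right-hand side of Theorem~\ref{t:Monsky}. There is nothing more to add.
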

		
		\begin{corollary}
			In the setting of Corollary \ref{c:Monsky for representations}, $L(\rho,s)$ is analytic in the disk $v_\pi(s) > -v_\pi(q)$.
		\end{corollary}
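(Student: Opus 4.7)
My plan is to apply Corollary~\ref{c:Monsky for representations}: writing $P(s) = \prod_{x \in |X - U|, x \notin S}(1 - \rho(\Frob_x)s)$, the identity takes the form
\[
L(\rho, s) = \frac{C(\Theta_q|V_\pi^\dagger, s)}{C(\Theta_q|V_\pi^\dagger, qs) \cdot P(s)},
\]
so I would reduce the problem to showing that the denominator has no uncancelled zeros inside the disk $v_\pi(s) > -v_\pi(q)$. The numerator is $\pi$-adically entire, so everything is controlled by the zero-locus analysis of $C(\Theta_q|V_\pi^\dagger, qs) \cdot P(s)$.

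The central observation is that the operator $\Theta = U_p \circ E$ preserves the $R_q$-submodule $A_\pi^\dagger \subset V_\pi^\dagger$: the operator $U_p$ preserves $A^\dagger$ since the trace map lands in $p\sigma(A^\dagger)$, and this extends by weak base change to $A_\pi^\dagger$; and multiplication by the Frobenius structure $E$ preserves $A_\pi^\dagger$ because $E$ is a unit there, using that $\rho$ is valued in $R^\times$. Hence $\Theta_q$ preserves $A_\pi^\dagger$. Using the $\pi$-adic separatedness of $A_\pi^\dagger$, I would then argue that every non-zero eigenvalue $\lambda$ of $\Theta_q$ on $V_\pi^\dagger$ satisfies $v_\pi(\lambda) \geq 0$: given an eigenvector $v$, one rescales by a power of $\pi$ so that $v \in A_\pi^\dagger \setminus \pi A_\pi^\dagger$, and then $\Theta_q(v) = \lambda v \in A_\pi^\dagger$ forces $\lambda \in R_q$. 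An induction on the nilpotency order handles generalized eigenvectors.

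Consequently, every zero of $C(\Theta_q|V_\pi^\dagger, qs)$ occurs at some $s = 1/(q\lambda)$ with $v_\pi(s) = -v_\pi(q) - v_\pi(\lambda) \leq -v_\pi(q)$, hence outside the desired disk. The remaining zeros of the denominator come from the polynomial factor $P(s)$, at $s = \rho(\Frob_x)^{-1}$ (with $v_\pi = 0$), and I would argue that these are cancelled by corresponding zeros of the numerator $C(\Theta_q|V_\pi^\dagger, s)$: since $\rho$ is unramified at each $x \in |X-U| \setminus S$, the local eigenvalue $\rho(\Frob_x)$ appears as a non-zero eigenvalue of $\Theta_q$ on $V_\pi^\dagger$ with at least matching multiplicity. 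Combining these ingredients gives analyticity of $L(\rho, s)$ on $v_\pi(s) > -v_\pi(q)$.

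The main obstacle is the eigenvalue estimate in the second paragraph: because $A_\pi^\dagger$ is only $\pi$-adically separated and not a Banach space, standard operator-norm arguments are unavailable, and one must argue directly via lattice preservation together with careful $\pi$-adic rescaling. A secondary subtlety, which I would expect to handle by a local-to-global comparison at each unramified branch point, is justifying the cancellation between $P(s)$ and $C(\Theta_q|V_\pi^\dagger, s)$ inside the disk.
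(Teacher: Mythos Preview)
Your handling of the factor $C(\Theta_q|V_\pi^\dagger,qs)$ is correct and is exactly what the paper means when it says ``$L(\phi,s)$ is analytic in this disk by the Monsky trace formula'': the lattice-preservation argument gives $v_\pi(\lambda)\geq 0$ for every nonzero eigenvalue, so the zeros of $C(\Theta_q|V_\pi^\dagger,qs)$ all lie in $v_\pi(s)\leq -v_\pi(q)$, and hence $P(s)L(\rho,s)=L(\phi,s)$ is analytic on the disk.

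The genuine gap is in the treatment of the zeros of $P(s)$, all of which sit on $v_\pi(s)=0$. The paper does \emph{not} attempt a direct cancellation argument; it simply invokes a theorem of Crew to the effect that $L(\rho,s)$ has no poles on the circle $v_\pi(s)=0$, and the corollary follows at once. Your proposed route---showing that each $\rho(\Frob_x)$ for $x\in |X-U|\setminus S$ occurs as an eigenvalue of $\Theta_q$ on $V_\pi^\dagger$ with the right multiplicity---is essentially equivalent to the statement you are trying to prove: since $C(s)=\prod_{j\geq 0}L(\phi,q^js)$ and $C(qs)$ is nonvanishing at $v_\pi(s)=0$, a zero of $C(s)$ at $s_0=\rho(\Frob_x)^{-1}$ amounts to $L(\phi,s_0)=P(s_0)L(\rho,s_0)=0$, which holds precisely when $L(\rho,s)$ fails to have a pole cancelling the zero of $P$ there. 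Your suggested ``local-to-global comparison'' does not break this circularity: the constant eigenvector for $\tilde\Theta$ on $\calA_{\pi,P}^\dagger$ at an unramified branch point $P$ does not lift to a global eigenvector in $\tilde V_\pi^\dagger$, and the finite-rank piece $\tilde L_\pi$ in the semi-local decomposition is not $\tilde\Theta$-stable, so there is no evident mechanism for producing the needed global eigenvalue directly.
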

		\begin{proof}
			Note that $L(\phi,s)$ is analytic in this disk by the Monsky trace formula. By a theorem of Crew \cite{Crew}, $L(\rho,s)$ has no poles in the region $v_\pi(s) = 0$. The claim follows from Corollary \ref{c:Monsky for representations}.
		\end{proof}
		
	\subsection{Semi-Local Growth Conditions}
	
		Let $P$ be a branch point of the covering $\eta$ defined in \S \ref{ss:mapping}. Recall that we have defined a flat lifting $(\calA_P^\dagger,\sigma)$ of the local field $F_P$ over $\Z_p$ in \S \ref{ss:semilocal}. Let $\calA_{\pi,P} = R \otimes_{\Z_p} \calA_P$, equipped with the $\pi$-adic topology. Again by weak base change, we obtain a flat lifting $(\calA_{\pi,P}^\dagger,\sigma)$ of $F_P$ over $R$. Let $\rho:\pi_1(X) \to R^\times$ be a continuous character, and let $(M,\phi)$ be unit-root $\sigma$-module over $A_\pi^\infty$ corresponding to $\rho_U$. By extension of scalars along the Frobenius-compatible map $A_\pi^\infty \to \calA_{\pi,P}^\infty$, we obtain a $\sigma$-module $(M_P,\phi_P)$ over $\calA_{\pi,P}^\infty$.
		
		\begin{lemma}\label{l:unr}
			If $\rho$ is unramified at $P$, then $(M_P,\phi_P)$ admits a constant Frobenius structure $\tilde{E}_P \in R_q$.
		\end{lemma}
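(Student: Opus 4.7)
The plan is to apply Katz's equivalence (Theorem \ref{t:Katz}) to two different liftings simultaneously: the semi-local lifting $(\calA_{\pi,P}^\infty,\sigma)$ of $F_P$ and the coefficient lifting $(R_q,F)$ of the residue field $\F_q$. The connecting tissue is the natural inclusion $R_q \hookrightarrow \calA_{\pi,P}^\infty$, which by construction of $\sigma$ in \S\ref{ss:semilocal} satisfies $\sigma|_{R_q} = F$ and is therefore Frobenius-compatible. The reduction modulo $\pi$ of this inclusion is the residue field inclusion $\F_q \hookrightarrow F_P$, corresponding geometrically to the structure morphism $\Spec F_P \to \Spec \F_q$. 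On fundamental groups, this morphism induces the quotient map $G_P \twoheadrightarrow \pi_1(\Spec \F_q) = \hat\Z$ by the inertia subgroup.

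Since $\rho_P$ is unramified by hypothesis, it factors through this quotient as a continuous character $\tilde\rho_P: \hat\Z \to R^\times$. Applying Theorem \ref{t:Katz} to the pair $(R_q, F)$, the character $\tilde\rho_P$ corresponds to a rank-one unit-root $F$-module $(\tilde N_P,\tilde\psi_P)$ over $R_q$. Choosing a generator of $\tilde N_P$, this module takes the form $(R_q, \tilde E_P \circ F)$ for some unit $\tilde E_P \in R_q^\times$ (concretely, $\tilde E_P$ is determined by $\tilde\rho_P(\mathrm{Frob})$ up to the coboundary $F(f)/f$).

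The final step is to invoke the functoriality clause of Theorem \ref{t:Katz}: extension of scalars along the Frobenius-compatible map $R_q \to \calA_{\pi,P}^\infty$ corresponds to pullback of representations along $G_P \to \hat\Z$. Applied to $(\tilde N_P, \tilde\psi_P)$, this pullback is precisely the unit-root $\sigma$-module associated to $\tilde\rho_P \circ (G_P \to \hat\Z) = \rho_P$, which by construction is $(M_P,\phi_P)$. But extension of scalars along $R_q \hookrightarrow \calA_{\pi,P}^\infty$ of the explicit module $(R_q, \tilde E_P \circ F)$ produces $(\calA_{\pi,P}^\infty, \tilde E_P \circ \sigma)$, exhibiting the constant Frobenius structure $\tilde E_P \in R_q$.

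The only real obstacle is verifying the identification of the $\pi_1$-map induced by $R_q \hookrightarrow \calA_{\pi,P}^\infty$ with the inertia quotient; this is a standard consequence of the fact that $\F_q$ is precisely the residue field of $F_P$, so that the action of a Galois element on $\bar\F_q$ factors through its image in $\Gal(\bar\F_q/\F_q)$, with trivial image exactly when the element fixes the residue field, i.e., lies in the inertia subgroup $I_P$. Everything else is a direct unpacking of Katz's equivalence as stated.
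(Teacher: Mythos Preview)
Your proof is correct and follows essentially the same approach as the paper: both use the factorization of $\rho_P$ through $\pi_1(\Spec \F_q)$ via the inertia quotient, apply Katz's equivalence over $R_q$ to obtain a rank-one $F$-module with constant Frobenius structure, and then invoke the functoriality clause to identify its extension of scalars along $R_q \hookrightarrow \calA_{\pi,P}^\infty$ with $(M_P,\phi_P)$. You have simply fleshed out the details (Frobenius-compatibility of the inclusion, the identification of the $\pi_1$-map with the inertia quotient) that the paper's two-line proof leaves implicit.
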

		\begin{proof}
			Since $(M,\phi)$ is unramified, $\rho$ factors through a character $\overline{\rho}:\pi_1(\Spec(\F_q)) \to R^\times$. The latter corresponds to a unit-root $F$-module $(\overline{M},\overline{\phi})$ over $R_q$. By Theorem \ref{t:Katz}, $(M,\phi)$ is obtained by extension of scalars from $(\overline{M},\overline{\phi})$ along $R_q \to \calA_{\pi,P}$, so $(M,\phi)$ must admit a Frobenius structure in $R_q$.
		\end{proof}
		
		We define as before $\calA_\pi = \prod_{P \in S_\eta} \calA_{\pi,P}$. By a \emph{semi-local Frobenius structure} for $(M,\phi)$, we will mean an element $\tilde{E} = (\tilde{E}_P) \in \calA_\pi^\infty$ such that $\tilde{E}_P$ is a Frobenius structure for $(M_P,\phi_P)$ for all $P \in S_\eta$.

		\begin{definition}
			Let $\bm\delta = (\delta_P)$ be a tuple of positive rational numbers indexed by $S$. We say that $\rho$ is $\pi$-adically \emph{$\bm\delta$-overconvergent} if $(M,\phi)$ admits a semi-local Frobenius structure $\tilde{E}$ such that:
			\begin{enumerate}
				\item	If $P \notin S$, then $\tilde{E}_P \in 1+\pi R_q$ is constant. In particular, $(M,\phi)$ is unramified at $P$.
				\item	If $P \in S$, then $\tilde{E}_P = \tilde{E}_P \in R_q[t_P^{-1}]^{\delta_P} \cap (1+\pi R_q\langle t_P^{-1}\rangle)$.
			\end{enumerate}
			In this case we refer to $\tilde{E}$ as a \emph{$\bm\delta$-Frobenius structure} for $(M,\phi)$.
		\end{definition}
		
		\begin{example}\label{ex:deltaOverconvergent}
			\begin{enumerate}
				\item	Suppose that $\rho:\pi_1(X) \to R^\times$ is finite of order $p^n$. Let $\pi \in \Z_p[\rho]$ be a uniformizer. For each $P \in S$, let $d_P$ denote the Swan conductor of $\rho$ at $P$, and let $\delta_P = d_P/p^{n-1}$. By \cite[Proposition 5.5]{KramerMiller}, $\rho$ is $\pi$-adically $\bm\delta$-overconvergent.
				\item	Let $X_\infty/X$ be a $\Z_p$-tower of curves (\S\ref{ss:stable}), regarded as a surjective map
				\begin{equation*}
					\rho:\pi_1(X) \to \Z_p.
				\end{equation*}
				Let $\chi: \Z_p \to R^\times$ be a continuous character, and let $\rho_\chi = \chi \circ \rho$. We define $\pi_\chi = \chi(1)-1$. In \cite{Kramer-Miller-Upton2} we prove that if $X_\infty/X$ has $\bm\delta$-stable monodromy and $R$ has characteristic $p$, then $\rho_\chi$ is $\pi_\chi$-adically $\bm\delta$-overconvergent.
				\item	Continuing the previous example: Let $X_\infty/X$ be an overconvergent $\Z_p$-tower of curves (\ref{ss:oc}), so that $X_\infty/X$ has $\bm\delta$-stable monodromy for some $\bm\delta = (\delta_P)_{P \in S}$. In \cite{Upton2}, we show that for $v_{\pi_\chi}(p) \gg 0$,  $\rho_\chi$ is $\pi_\chi$-adically $\bm\delta$-overconvergent.
			\end{enumerate}
		\end{example}

		\subsection{Global growth conditions}
		The Monsky trace formula requires that the $\sigma$-module $(M,\phi)$ in question is overconvergent (i.e. it has
		an overconvergent Frobenius structure). The following proposition says that one may check the overconvergence of
		$M$ locally at each $P \in S$. In particular, the Monsky trace formula may be applied to any $\sigma$-module that 
		is $\bm\delta$-overconvergent.
		
		\begin{proposition}\label{p:oc}
			Let $(M,\phi)$ be a $\sigma$-module over $A_\pi^\infty$ with Frobenius structure $E \in 1 + \pi A_\pi^\infty$. 
			Suppose that $(M,\phi)$ admits an overconvergent semi-local Frobenius structure $\tilde{E} \in 1+\pi \calA_\pi^\dagger$. Then
			there exists $a \in 1 + \pi A_\pi^\infty $ such that $E'= \sigma(a) E a^{-1}$ is contained in $1+\pi A_\pi^\dagger$.
			In particular $(M,\phi)$ is overconvergent.
		\end{proposition}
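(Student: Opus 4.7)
The plan is to construct $a$ as a $\pi$-adic limit of approximations $a^{(n)} \in 1+\pi A_\pi^\infty$, controlling the overconvergence of $\sigma(a^{(n)}) E (a^{(n)})^{-1}$ modulo $\pi^{n+1}$ at each stage.

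First I would establish a local-to-global principle for overconvergence: an element $f \in A_\pi^\infty$ lies in $A_\pi^\dagger$ if and only if its principal part at each $P \in S_\eta$ lies in $\calA_{\pi, P}^{\dagger, \tr}$. The proof of Proposition \ref{p:semilocal} in fact produces parallel short exact sequences with the same finite free kernel $L_\pi$, namely
\begin{equation*}
0 \to L_\pi \to A_\pi^\dagger \to \calA_\pi^{\dagger, \tr} \to 0 \quad \text{and} \quad 0 \to L_\pi \to A_\pi^\infty \to \calA_\pi^{\infty, \tr} \to 0.
\end{equation*}
If $f \in A_\pi^\infty$ has overconvergent principal part, lift it via the first sequence to $\hat f \in A_\pi^\dagger$; the difference $f - \hat f$ then lies in the kernel of the second sequence, which is $L_\pi \subseteq A_\pi^\dagger$, so $f$ itself lies in $A_\pi^\dagger$.

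Using this criterion, I would construct $a$ by $\pi$-adic induction. By hypothesis, at each $P \in S_\eta$ there exists $\alpha_P \in 1 + \pi \calA_{\pi, P}^\infty$ with $\sigma(\alpha_P) E_P \alpha_P^{-1} = \tilde E_P$. I would inductively produce $a^{(n)} \in 1+\pi A_\pi^\infty$ and $E'^{(n)} \in 1+\pi A_\pi^\dagger$ satisfying $a^{(n+1)} \equiv a^{(n)}$, $E'^{(n+1)} \equiv E'^{(n)}$ modulo $\pi^{n+1}$, all $E'^{(n)}$ lying in a common growth class $A_\pi^m(b)$, and
\begin{equation*}
\sigma(a^{(n)}) E (a^{(n)})^{-1} \equiv E'^{(n)} \pmod{\pi^{n+1}}.
\end{equation*}
To pass from stage $n$ to $n+1$, I write the defect as $\pi^{n+1} h_n$ and seek $a^{(n+1)} = a^{(n)}(1 + \pi^{n+1} b_n)$, $E'^{(n+1)} = E'^{(n)} + \pi^{n+1} \delta_n$. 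A direct calculation, using $E'^{(n)} \equiv 1 \bmod \pi$ and commutativity of $A_\pi^\infty$, reduces the problem to finding $b_n \in A_\pi^\infty$ and $\delta_n \in A_\pi^m(b)$ with $\delta_n \equiv h_n + \sigma(b_n) - b_n \pmod \pi$. Semi-locally, the gauge $\alpha_P$ transports $h_n$ into the overconvergent realm, yielding a local correction $b_{n,P}$; patching these via the surjection of Proposition \ref{p:semilocal} produces a global $b_n \in A_\pi^\infty$ for which $\delta_n$ has overconvergent principal parts, and hence is itself overconvergent by the criterion above.

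The main obstacle will be the uniform growth control: each $b_n$ must be chosen so that $\delta_n$ stays within a fixed class $A_\pi^m(b)$, so that the $\pi$-adic limit $E' = \lim E'^{(n)}$ converges inside $A_\pi^m(b) \subseteq A_\pi^\dagger$ (which is $\pi$-adically complete) rather than merely inside $A_\pi^\infty$. This uniformity should come from the finiteness of $S_\eta$ and the fact that each $\tilde E_P$ already lives in a fixed growth class in $\calA_{\pi, P}^\dagger$, together with explicit bounds on the lifting map in Proposition \ref{p:semilocal}. With this in hand, $a := \lim a^{(n)} \in 1 + \pi A_\pi^\infty$ and $E' := \lim E'^{(n)} \in 1 + \pi A_\pi^\dagger$ satisfy $E' = \sigma(a) E a^{-1}$, giving the desired overconvergent Frobenius structure.
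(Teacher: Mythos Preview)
Your local-to-global criterion for overconvergence via the parallel short exact sequences is correct, and the overall $\pi$-adic inductive scheme matches the paper's. The divergence is exactly at the point you label the ``main obstacle,'' and your proposal does not actually clear it.

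The paper does \emph{not} attempt to keep $E'^{(n)}$ inside a fixed growth class $A_\pi^m(b)$. It works one point $P$ at a time (using the subring of functions regular away from $P$) and, via the Artin--Schreier substitution $c\mapsto c^p-c$, arranges only that modulo each $\pi^{n+1}$ the principal part of $E'$ at $P$ has exponents that are either bounded by a fixed $C$ or \emph{prime to $p$}. These pole orders are unbounded as $n$ grows, so the limit is not visibly in any $A_\pi^m(b)$. The missing ingredient is then supplied by a separate lemma (Lemma~\ref{l: how to identify OC Frobenius structures}): any local Frobenius structure $E_P'\equiv 1\bmod\pi$ whose partial valuations $w_k^P(E_P')$ are all either $\geq -C$ or prime to $p$ is automatically overconvergent. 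It is in the proof of \emph{that} lemma that the overconvergent $\tilde E_P$ is really used: writing $E_P'=\tilde E_P\cdot\sigma(b)/b$, one shows that if $b$ failed to be overconvergent then the first ``bad'' partial valuation of $\sigma(b)/b$ would equal $p$ times that of $b$, hence divisible by $p$, contradicting the prime-to-$p$ condition just arranged.

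Your sentence ``the gauge $\alpha_P$ transports $h_n$ into the overconvergent realm, yielding a local correction $b_{n,P}$'' is where an idea of this kind is needed, and as written it is not an argument. The naive reduction $h_n\mapsto h_n+\sigma(b_n)-b_n$ modulo $\pi$ can only kill poles of $p$-divisible order; prime-to-$p$ poles survive and grow without bound in $n$, so there is no reason $\delta_n$ lands in a fixed class. If instead you intend to take $b_{n,P}$ to be the $\pi^{n+1}$-coefficient of $\alpha_P/a^{(n)}_P$ and then patch, you must maintain inductively that $a^{(n)}_P/\alpha_P\in 1+\pi\cdot R_q\llbracket t_P\rrbracket$ modulo $\pi^{n+1}$ and check that the patching errors and the action of $\sigma$ preserve this ``regular'' condition through the induction; this is plausible at the points $P\in S$ (where $\sigma(t_P)=t_P^p$) but is a genuine bookkeeping argument you have not supplied, and it is more delicate at the points over $1$ where $\sigma$ is not monomial. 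Either way, the heart of the proof is a mechanism converting the mere existence of $\tilde E_P$ into a growth constraint on $E'$, and the paper's partial-valuation criterion is exactly such a mechanism.
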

		We define the $k$-th partial valuation 
		on $\mathcal{A}_{\pi,P}^\infty$ as follows:
		For $f \in \mathcal{A}_{\pi,P}^\infty$ write $f = \sum a_n t_P^{n}$. We define $w_k^{P}(f)$ to be the
		$t_P$-adic order of $f$ reduced modulo $\pi^{k+1}$. That is, 
		\begin{align*}
		w_k^{P}(f) &= \min_{v_\pi(a_n)\leq k} n.
		\end{align*}
		Note that we have 
		\begin{align}\label{eq:partial valuation multiplication inequality}
		w_k^{P}(fg) &\geq \min_{i+j=k} w_i^P(f) + w_j^P(g)
		\end{align}
		
		\begin{lemma}\label{l:frobenius multiplies valuation by p}
			Let $m$ be an integer large enough so that $\sigma(t_P)\in \mathcal{A}_{\pi,P}^{pm}(p)$ (note that by our definition of $\sigma$ and $t_P$, such an $m$ always exists). Let $x \in \mathcal{A}_{\pi,P}^\infty$. Fix a natural number $k_0$. Assume that $w^P_{k}(x)>-km$ for
			$k<k_0$ and $w^P_{k_0}(x)< -k_0m$. Then $w^P_{k_0}(\sigma(x))=pw^P_{k_0}(x)$ and $w^P_k(\sigma(x))>-kpm$ for $k < k_0$.
		\end{lemma}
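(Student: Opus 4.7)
The plan is to expand $\sigma(x) = \sum_n \sigma(a_n)\sigma(t_P)^n$ and estimate the contribution of each summand modulo $\pi^{k_0+1}$, using the growth hypothesis on $\sigma(t_P)$ together with the fact that $\sigma$ lifts Frobenius modulo $\pi$. As a preliminary reduction, terms with $v_\pi(a_n) \geq k_0+1$ vanish modulo $\pi^{k_0+1}$, so I would assume throughout that $v_\pi(a_n) \leq k_0$ for every nonzero coefficient. Under this reduction, $n_0 := w^P_{k_0}(x)$ is realized by a term $a_{n_0} t_P^{n_0}$ with $v_\pi(a_{n_0}) = k_0$, and the hypothesis guarantees $n > -v_\pi(a_n)m$ for every $n$ with $v_\pi(a_n) < k_0$.

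The key technical ingredient is a bound of the form $w^P_j(\sigma(t_P)^n) \geq pn - jpm$ for all $j \geq 0$ and all $n \in \Z$. For $n \geq 0$ this can be proved by induction on $n$ using the multiplicative inequality \eqref{eq:partial valuation multiplication inequality} together with the base case $w^P_j(\sigma(t_P)) \geq p - jpm$, which follows from $\sigma(t_P) \in \mathcal{A}_{\pi,P}^{pm}(p)$ combined with $\sigma(t_P) \equiv t_P^p \pmod{\pi}$. For $n < 0$ I would write $\sigma(t_P) = t_P^p(1+T)$ with $T = \sigma(t_P)/t_P^p - 1 \in \pi \mathcal{A}_{\pi,P}^\dagger$, expand $\sigma(t_P)^n = t_P^{pn} \sum_{i \geq 0} \binom{n}{i} T^i$, and derive a parallel estimate. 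Applying \eqref{eq:partial valuation multiplication inequality} to $\sigma(a_n)\sigma(t_P)^n$ with $v = v_\pi(a_n) \leq k$ yields $w^P_k(\sigma(a_n)\sigma(t_P)^n) \geq w^P_{k-v}(\sigma(t_P)^n) \geq pn - (k-v)pm$; the hypothesis $n > -vm$ then upgrades this to $pn - (k-v)pm > -vpm - (k-v)pm = -kpm$, proving claim (b) after minimizing over $n$.

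For the equality $w^P_{k_0}(\sigma(x)) = pn_0$, I would isolate the leading contribution: since $v_\pi(\sigma(a_{n_0})) = k_0$ and $\sigma(t_P)^{n_0} \equiv t_P^{pn_0} \pmod{\pi}$, the product $\sigma(a_{n_0})\sigma(t_P)^{n_0}$ reduces modulo $\pi^{k_0+1}$ to $\pi^{k_0} u'_{n_0} t_P^{pn_0}$ with $u'_{n_0}$ a unit in $R_q$, yielding an unambiguous nonzero term at $t_P$-exponent $pn_0$. I would then check that every other contribution lies strictly above $pn_0$: terms with $v_\pi(a_n) = k_0$ and $n > n_0$ contribute at exponent $pn > pn_0$, while terms with $v_\pi(a_n) = v < k_0$ contribute at exponent $\geq pn - (k_0-v)pm$, which strictly exceeds $pn_0$ by the same computation as in the previous paragraph, now using $n_0 < -k_0m$.

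The main obstacle I anticipate is establishing the key estimate $w^P_j(\sigma(t_P)^n) \geq pn - jpm$, especially for negative $n$. The growth condition on $\sigma(t_P)$ alone is not obviously sharp enough to control the poles appearing in $T^i$ at higher $\pi$-levels; one must exploit the structural constraint that $\sigma(t_P)$ is a unit in $\mathcal{A}_{\pi,P}^\dagger$ (equivalently, that $\sigma$ extends to a ring endomorphism rather than a mere element of prescribed growth), which pins down the growth class of $T$ and the convergence of the binomial expansion $(1+T)^n$ for $n < 0$. Once these estimates are in place, the remainder of the proof is a careful bookkeeping exercise via \eqref{eq:partial valuation multiplication inequality}.
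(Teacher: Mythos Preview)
Your approach is essentially the same as the paper's: the paper's proof is the single sentence ``This is deduced from \eqref{eq:partial valuation multiplication inequality},'' and your proposal is a careful unpacking of exactly that deduction---expanding $\sigma(x)$ term by term and controlling each piece via the multiplicative inequality for partial valuations. The structure of your argument (isolating the minimal-order term at level $k_0$, then showing all competing contributions land at strictly higher $t_P$-exponent) is the natural way to make the one-line proof precise, and your handling of the case split according to $v_\pi(a_n)$ is correct.

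One small caveat on constants: your base case $w^P_j(\sigma(t_P)) \geq p - jpm$ for $j \geq 1$ does not follow from the hypothesis $\sigma(t_P) \in \mathcal{A}_{\pi,P}^{pm}(p)$ together with $\sigma(t_P)\equiv t_P^p\pmod\pi$. The growth condition only yields $w^P_j(\sigma(t_P)) \geq -jpm - p$, which is weaker by $2p$; indeed one can write down $\sigma(t_P)=t_P^p+\pi t_P^{-(pm+p-1)}\in\mathcal{A}_{\pi,P}^{pm}(p)$ for which your sharper bound fails at $j=1$. With the correct input the inductive estimate becomes $w^P_j(\sigma(t_P)^n) \geq pn - j(pm + 2p)$, and the conclusion you reach is $w^P_k(\sigma(x)) > -k(pm + 2p)$ rather than $> -kpm$. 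This is entirely harmless for the sole application in the next lemma, where $m$ is taken arbitrarily large anyway; it just means the constants in the lemma as literally stated are slightly loose. Your instinct that the estimate on $\sigma(t_P)^n$ (especially for $n<0$) is the crux is exactly right---the fix is simply to carry the slightly larger constant throughout, or equivalently to absorb it by enlarging $m$.
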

		\begin{proof}
			This is deduced from \eqref{eq:partial valuation multiplication inequality}.
		\end{proof}

		\begin{lemma} \label{l: how to identify OC Frobenius structures}
			Let $E_P'$ be a Frobenius structure of $(M_P, \phi_P)$ with $E_P' \equiv 1 \mod \pi$ and let $C>0$. Assume that for $k\gg 0$ we have either
			$w_k^{P}(E_P') \in \Z \backslash p\Z$ or $w_k^{P}(E_P') \geq -C$. Then $E_P'$ is overconvergent. 
		\end{lemma}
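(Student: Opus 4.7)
The plan is to compare $E_P'$ with an overconvergent Frobenius structure $\tilde{E}_P\in 1+\pi\calA_{\pi,P}^\dagger$ for the same $\sigma$-module $(M_P,\phi_P)$, whose existence is given by the hypothesis of Proposition~\ref{p:oc}. Since both $E_P'$ and $\tilde{E}_P$ are Frobenius structures for the same rank-one $\sigma$-module, there exists $a\in 1+\pi\calA_{\pi,P}^\infty$ with $E_P'=\sigma(a)\tilde{E}_Pa^{-1}$; commutativity of $\calA_{\pi,P}$ gives the coboundary relation $g:=E_P'\tilde{E}_P^{-1}=\sigma(a)/a$. Since $\tilde{E}_P^{\pm 1}$ is overconvergent, it suffices to prove that $a$, equivalently $g$, is overconvergent.

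Arguing by contradiction, assume that $a$ is not overconvergent. Choose $m$ with $\sigma(t_P)\in\calA_{\pi,P}^{pm}(p)$, and enlarge $m$ so that $w_k^P(a)>-km$ holds for every $k$ below the threshold $K_0$ past which the hypothesis applies. Since $a$ is not overconvergent, we may find arbitrarily large $k_0>K_0$ with $w_{k_0}^P(a)<-k_0m-B$ for any prescribed $B>0$; choose such a $k_0$ minimally, so that $w_k^P(a)>-km$ for all $k<k_0$. Lemma~\ref{l:frobenius multiplies valuation by p} then yields $w_{k_0}^P(\sigma(a))=pw_{k_0}^P(a)$ and $w_k^P(\sigma(a))>-kpm$ for $k<k_0$.

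The crucial step is to upgrade this to the equality $w_{k_0}^P(g)=pw_{k_0}^P(a)\in p\Z$. An induction using \eqref{eq:partial valuation multiplication inequality} and the identity $1-a^{-1}=-(a-1)a^{-1}$ first gives $w_k^P(a^{-1})>-km$ for $k<k_0$. Applying \eqref{eq:partial valuation multiplication inequality} to $g=\sigma(a)\cdot a^{-1}$, the minimum of $w_i^P(\sigma(a))+w_j^P(a^{-1})$ over $i+j=k_0$ is attained uniquely at $(i,j)=(k_0,0)$ once $B$ is large, yielding $w_{k_0}^P(g)=pw_{k_0}^P(a)$. Since $\tilde{E}_P$ is overconvergent, $w_k^P(\tilde{E}_P)$ grows at most linearly in $k$, so for $k_0$ and $B$ large enough, $w_{k_0}^P(\tilde{E}_P)$ much exceeds $w_{k_0}^P(g)$, and $E_P'=g\tilde{E}_P$ inherits $w_{k_0}^P(E_P')=w_{k_0}^P(g)=pw_{k_0}^P(a)\in p\Z$.

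Finally, for $B$ sufficiently large we also have $w_{k_0}^P(E_P')<-C$, contradicting the hypothesis, which would then force $w_{k_0}^P(E_P')\in\Z\setminus p\Z$. Hence $a$, and therefore $E_P'$, is overconvergent. The main obstacle is the equality $w_{k_0}^P(g)=pw_{k_0}^P(a)$: this requires not only the minimality of $k_0$ with respect to the linear bound $-km$, but also the freedom to take $B$ arbitrarily large, so as to ensure that the leading term of $\sigma(a)$ modulo $\pi^{k_0+1}$ is not cancelled by cross-terms in the expansion of $\sigma(a)\cdot a^{-1}$.
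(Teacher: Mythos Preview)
Your overall strategy matches the paper's: compare $E_P'$ with the given overconvergent $\tilde{E}_P$, write $E_P'=\tilde{E}_P\cdot\sigma(a)/a$ with $a\equiv 1\pmod\pi$, and derive a contradiction from non-overconvergence of $a$ via partial valuations and Lemma~\ref{l:frobenius multiplies valuation by p}. However, your quantifier scheme has a genuine gap.

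You fix $m$ once and then, for each $B$, seek $k_0$ minimal with $w_{k_0}^P(a)<-k_0m-B$, asserting that this minimality yields $w_k^P(a)>-km$ for all $k<k_0$. It does not: minimality with respect to the inequality $<-km-B$ only gives $w_k^P(a)\geq -km-B$ for $k<k_0$, which is too weak for the hypothesis of Lemma~\ref{l:frobenius multiplies valuation by p}. Conversely, if you instead take $k_0$ minimal with $w_{k_0}^P(a)<-k_0m$ (which \emph{would} give $w_k^P(a)\geq -km$ for $k<k_0$), then this $k_0$ is determined by $m$ alone and $w_{k_0}^P(a)$ is a fixed number; you lose the freedom to make $B$, and hence $|w_{k_0}^P(E_P')|$, arbitrarily large, so the final contradiction with the bound $-C$ does not go through.

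The paper avoids this by letting $m$ itself be the free parameter rather than introducing $B$. For each large $m$ one takes $k_m$ minimal with $w_{k_m}^P(a)<-mk_m$; minimality then gives $w_k^P(a)\geq -mk$ for $k<k_m$, exactly the hypothesis of Lemma~\ref{l:frobenius multiplies valuation by p}, and the conclusion $w_{k_m}^P(E_P')=pw_{k_m}^P(a)<-pmk_m$ is simultaneously in $p\Z$ and (once $m$ is large enough) below $-C$. Your separate parameter $B$ is superfluous once $m$ is allowed to grow, and introducing it is precisely what breaks the minimality argument.
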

		\begin{proof}
			Let $\tilde{E}_P$ be the $P$-th coordinate of $\tilde{E}$. In particular, $\tilde{E}_P$ is a Frobenius structure of $(M_P,\phi_P)$ contained in $\mathcal{A}^\dagger_{\pi,P}$
			with $\tilde{E}_P \equiv 1\mod \pi$. Then we have
			$\tilde{E}_P \frac{\sigma(b)}{b} = E_P'$ for some $b \in \mathcal{A}_{\pi,P}^\infty $ with $b \equiv 1 \mod \pi$. We need to prove $b$ is overconvergent.
			Assume the contrary. In terms of partial valuations, this means that for every $m>0$ there exists $k_m$ such that
			$w_{k_m}^{P}(b)< -mk_m$.
			Assume that $k_m$ is the smallest integer such that this inequality holds (so that $w_{k}^{P}(b)\geq  -mk$ for $k\leq k_m$). 
			If $m$ is sufficiently large, we know from \eqref{eq:partial valuation multiplication inequality} and Lemma \ref{l:frobenius multiplies valuation by p} that 
			\begin{align}\label{eq:growth of base change equations}
			\begin{split}
			w_{k_m}^{P}\Big (\frac{\sigma(b)}{b}\Big) &= pw^P_{k_m}(b)<-pmk_m, \\
			w_{k}^{P}\Big(\frac{\sigma(b)}{b}\Big) &> -pmk, \text{ for $k<k_m$.}
			\end{split}
			\end{align}
			Since $\tilde{E}_P$ is overconvergent, there exists $m_0>0$ such that $w_k^{P}(\tilde{E}_P)\geq -m_0k$ for all $k\geq 0$. Then from \eqref{eq:partial valuation multiplication inequality} and \eqref{eq:growth of base change equations} 
			we have
			\begin{align*}
			w_{k_m}^{P}(E_P') &= pw^P_{k_m}(b),
			\end{align*}
			for $m$ sufficiently large.
			If we take $m$ to be larger than $C$, this contradicts our assumption on the partial
			valuations of $E_P'$. 
		\end{proof}

		\begin{proof}(Of proposition \ref{p:oc})
			Recall that $E \in A_\pi^\infty$ be a Frobenius structure of $(M,\phi)$ with $E \equiv 1 \mod \pi$. 
			For each $P$, let $A_{\pi,P}$ denote the subring of $A_\pi$ whose only poles are at $[P]$, the Teichmuller lift of $P$. Then $\Spec(A_{\pi,P})$ is an affine curve over whose special fiber is
			$\overline{X}-P$. We will show that there exists $a_P \in A_{\pi,P}^\infty$ such that $E\frac{\sigma(a_P)}{a_P}$ is 
			overconvergent at $P$ (i.e. it lies in $\mathcal{A}_{\pi,P}^\dagger $). Note that if $E$ is overconvergent at $Q \neq P$, then so is $E\frac{\sigma(a_P)}{a_P}$,
			since $a_P$ does not have a pole at $Q$. The proposition will follow by repeating for each $P$.
			
			Let $C$ be sufficiently large so that $H^0(\overline{X}, \mathcal{O}_{\overline{X}}((C+n)P))/H^0(\overline{X},\mathcal{O}_{\overline{X}}(CP))$ has dimension $n$.
			We will find $a_P \in A_{\pi,P}^\infty$ such that $E\frac{\sigma(a_P)}{a_P}$ satisfies the hypothesis of Lemma \ref{l: how to identify OC Frobenius structures}. 
			More precisely, we will inductively construct $a_{P,n} \in A_{\pi,P}^\infty$ satisfying
			\begin{enumerate}
				\item $a_{P,n+1} \equiv a_{P,n} \mod \pi^{n+1}$
				\item We can write $E \frac{\sigma(a_{P,n})}{a_{P,n}} \equiv b_{P,n} + c_{P,n} \mod \pi^{n+1}$ where $c_{P,n} \in t_P^{-C}R\llbracket t_P \rrbracket$ and $b_{P,n}$ is of the form
				\begin{align*}
				b_{P,n} &= \sum_{p\nmid k} x_{P,n,k} t_P^{k}.
				\end{align*}
			\end{enumerate}		
			We then take $a_P= \lim a_{P,n}$ and the result follows from Lemma \ref{l: how to identify OC Frobenius structures}.
			
			For $n=0$ we take $a_{P,n}=1$. Let $n>0$ and assume such an $a_{P,n}$ exists. We write 
			\begin{align*}
			E \frac{\sigma(a_{P,n})}{a_{P,n}} &= b_{P,n} + c_{P,n} + \pi^{n+1}r_n \mod \pi^{n+2}.
			\end{align*}
			Let $r_{n,0}$ denote the reduction of $r_n$ modulo $\pi$. Break up the $t_P$-adic expansion of $r_{n,0}$:
			\begin{align*}
			r_{n,0} = \underbrace{  \sum_{k= -pC} ^\infty y_{0,k} t_P^k}_{\alpha_0} + \underbrace{\sum_{\stackrel{k<-pC}{p\nmid k}}y_{0,k} t_P^k}_{\beta_0} + \underbrace{\sum_{pk<-pC} y_{0,pk} t_P^{pk}}_{\gamma_0}.
			\end{align*}
			By our assumption on $C$, there exists a regular function $c_0$ on $\overline{X}-P$ with
			\begin{align*}
			c_0^p &\equiv \gamma_0 \mod t_P^{-C}\mathbb{F}_q  \llbracket t_P \rrbracket.
			\end{align*}
			We then set $r_{n,1}=r_{n,0}-c_0^p+c_0$. Break up $r_{n,1}$ into $\alpha_1 + \beta_1 + \gamma_1$ as above and we find $c_1$ with $c_1^p \equiv \gamma_1 \mod t_P^{-C}\mathbb{F}_q  \llbracket t_P \rrbracket$. We define $r_{n,2}=c_1^p-c_1$. Repeat this process. Note that the order of the pole of $\gamma_i$ decreases. In particular, for $i$ large enough we have $\gamma_i=0$. Write $c=\sum c_i$ and let
			$\tilde{c}$ be a lift of $c$ to $A_{\pi,P}$. By definition we have
			\begin{align*}
			r_{n,0} - c^p + c &= \sum_{k= -pC} ^\infty y_k t_P^k + \sum_{\stackrel{k<-pC}{p\nmid k}}y_{k} t_P^k.
			\end{align*}
			We define $a_{P,n+1}=a_{P,n}(1-\pi^{n+1}\tilde{c})$, which satisfies the correct properties
		\end{proof}

	\subsection{Semi-Local Twisting}\label{ss:twist}
	
		Henceforth we will assume that $\rho:\pi_1(X) \to R^\times$ is $\bm\delta$-overconvergent, and let $(M,\phi)$ denote the unit-root $\sigma$-module over $A_\pi^\dagger$ corresponding to $\rho$. We fix a $\bm\delta$-Frobenius structure $\tilde{E}$ for $(M,\phi)$ and a global Frobenius structure $E = \sigma(a) \tilde{E} a^{-1} \in A_\pi^\dagger$ as in Proposition \ref{p:oc}. By weak base change, the semi-local $U_p$ operator (\ref{ss:semilocal}) induces a $p$-Dwork operator $U_p:\calA_\pi^\dagger \to \calA_\pi^\dagger$. We define $\tilde{\Theta}$ to be the $p$-Dwork operator
	 	\begin{equation*}
	 		\tilde{\Theta} = U_p \circ \tilde{E}:\calA_\pi^\dagger \to \calA_\pi^\dagger.
	 	\end{equation*}
		Since $\Theta$ is $\sigma^{-1}$-linear, we have
		\begin{align*}
			\Theta	=	U_p	\circ E = U_p \circ (\sigma(a) \circ \tilde{E} \circ a^{-1})	=	a \circ (U_p \circ \tilde{E}) \circ a^{-1}.
		\end{align*}
		Let $\tilde{A}_\pi^\dagger = a^{-1} \cdot A_\pi^\dagger$ and $\tilde{V}_\pi^\dagger = K \otimes_R \tilde{A}_\pi^\dagger$, regarded as a $K_q$-vector space. The following is now immediate:
		
		\begin{proposition}
			The operator $\tilde{\Theta}$ restricts to an endomorphism of $\tilde{V}_\pi^\dagger$. Moreover, the action of $\tilde{\Theta}_q$ on this space is nuclear, and there is an equality of Fredholm series
			\begin{equation*}
				C(\tilde{\Theta}_q|\tilde{V}_\pi^\dagger,s) = C(\Theta_q|V_\pi^\dagger,s).
			\end{equation*}
		\end{proposition}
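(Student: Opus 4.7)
The key observation is that multiplication by $a^{-1}$ defines a $K_q$-linear isomorphism $\phi_a : V_\pi^\dagger \xrightarrow{\sim} \tilde{V}_\pi^\dagger$. Indeed, by the hypothesis of Proposition \ref{p:oc} we have $a \in 1 + \pi A_\pi^\infty$, so $a$ is a unit in $A_\pi^\infty$; and $\tilde{A}_\pi^\dagger$ was \emph{defined} as $a^{-1} A_\pi^\dagger$, so multiplication by $a^{-1}$ is a surjective $R$-module map $A_\pi^\dagger \twoheadrightarrow \tilde{A}_\pi^\dagger$, which is injective because $a$ is a unit. Tensoring over $R$ with $K$ yields the desired $K_q$-linear bijection.

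The plan is then to transport all structure from $\Theta$ to $\tilde{\Theta}$ via $\phi_a$. First, I would invoke the computation (displayed just before the proposition statement)
\[
\Theta = a \circ \tilde{\Theta} \circ a^{-1},
\]
which is equivalent to $\tilde{\Theta} = \phi_a \circ \Theta \circ \phi_a^{-1}$ on $\tilde{V}_\pi^\dagger$. For any $\tilde{v} \in \tilde{V}_\pi^\dagger$, write $\tilde{v} = a^{-1} v$ with $v \in V_\pi^\dagger$; then $\tilde{\Theta}(\tilde{v}) = a^{-1}\Theta(v) \in a^{-1} V_\pi^\dagger = \tilde{V}_\pi^\dagger$. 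This proves the first assertion, that $\tilde{\Theta}$ restricts to an endomorphism of $\tilde{V}_\pi^\dagger$, and simultaneously exhibits this restriction as conjugate to $\Theta|V_\pi^\dagger$ via the $K_q$-linear isomorphism $\phi_a$.

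Iterating, $\tilde{\Theta}_q|\tilde{V}_\pi^\dagger = \phi_a \circ (\Theta_q|V_\pi^\dagger) \circ \phi_a^{-1}$ as $K_q$-linear operators. Since $\Theta_q$ acts nuclearly on $V_\pi^\dagger$ by \cite[Theorem 2.1]{Monsky} (as invoked earlier in the discussion of the Monsky trace formula), and since nuclearity is manifestly invariant under $K_q$-linear conjugation (the $\psi$-invariant decomposition $V = F_g \oplus V_g$, the finite-dimensionality of each $V_g$, and the condition that non-zero eigenvalues tend to infinity all transport along $\phi_a$), the action of $\tilde{\Theta}_q$ on $\tilde{V}_\pi^\dagger$ is nuclear. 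Finally, conjugate operators have identical characteristic series because the spaces $\tilde{V}_\pi^{\dagger, < r}$ and $V_\pi^{\dagger, <r}$ correspond under $\phi_a$ and the restricted finite-dimensional operators have equal characteristic polynomials. This gives
\[
C(\tilde{\Theta}_q | \tilde{V}_\pi^\dagger, s) = C(\Theta_q | V_\pi^\dagger, s),
\]
completing the proof. The only point requiring any care is the verification that $a \in A_\pi^\infty$ is genuinely a unit (so that $\phi_a$ is defined), but this is immediate from $a \equiv 1 \pmod{\pi}$ and the $\pi$-adic completeness of $A_\pi^\infty$.
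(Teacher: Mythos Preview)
Your proof is correct and follows exactly the approach the paper intends: the paper simply declares the proposition ``immediate'' from the displayed conjugation relation $\Theta = a \circ \tilde{\Theta} \circ a^{-1}$ and the definition $\tilde{A}_\pi^\dagger = a^{-1} A_\pi^\dagger$, and you have spelled out precisely why this conjugation transports the endomorphism property, nuclearity, and the characteristic series along the isomorphism $\phi_a$.
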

		
		For each point $P$ of $\overline{X}$ with  $\eta(P) \in  \{0,1,\infty\}$, let $\calA_{\pi,P}^\tr = R \otimes_{\Z_p} \calA_P^\tr$, regarded as an $R_q$-submodule of $\calA_{\pi,P}$. We define as before $\calA_\pi^\tr = \bigoplus_P \calA_{\pi,P}^\tr$. As in Proposition \ref{p:semilocal}, we see that there is an exact sequence of $R_q$-modules
		\begin{equation*}
			0	\to	L_\pi	\to	A_\pi^\dagger	\to	\calA_\pi^{\dagger,\tr}	\to	0,
		\end{equation*}
		where $L_\pi$ is a finite free $R_q$-module of rank $N$ (\ref{eq:rank}). The following lemma states that the ``twisted'' space $\tilde{A}_\pi^\dagger$ can be decomposed in a similar way:
		
		\begin{lemma}
			There is an exact sequence of $R_q$-modules
			\begin{equation*}
				0	\to	\tilde{L}_\pi	\to	\tilde{A}_\pi^\dagger	\to	\calA_\pi^{\dagger,\tr}	\to	0,
			\end{equation*}
			where $\tilde{L}_\pi = a^{-1} L_\pi$ is a finite free $R_q$-module of rank $N$.
		\end{lemma}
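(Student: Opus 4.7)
The plan is to transport the exact sequence
\begin{equation*}
    0 \to L_\pi \to A_\pi^\dagger \xrightarrow{\pr} \calA_\pi^{\dagger,\tr} \to 0
\end{equation*}
established in the preceding paragraph across the isomorphism given by multiplication by $a^{-1}$. The definition $\tilde{A}_\pi^\dagger = a^{-1} \cdot A_\pi^\dagger$ is made precisely so that this transport makes sense.

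First, I would record that since $a \in 1+\pi A_\pi^\infty$ (by Proposition \ref{p:oc}), the element $a^{-1}$ exists and also lies in $1+\pi A_\pi^\infty$. Because $a^{-1}$ is an element of the commutative $R_q$-algebra $A_\pi^\infty$, multiplication by $a^{-1}$ is an injective $R_q$-linear endomorphism of $A_\pi^\infty$, and restricted to $A_\pi^\dagger$ it gives an $R_q$-linear isomorphism $\iota:A_\pi^\dagger \xrightarrow{\sim} \tilde{A}_\pi^\dagger$ by the very definition of $\tilde{A}_\pi^\dagger$. Its inverse is multiplication by $a$.

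Second, I would define $\tilde{\pr}:\tilde{A}_\pi^\dagger \to \calA_\pi^{\dagger,\tr}$ to be the composition $\tilde{A}_\pi^\dagger \xrightarrow{\iota^{-1}} A_\pi^\dagger \xrightarrow{\pr} \calA_\pi^{\dagger,\tr}$. Since $\pr$ is surjective (by the untwisted sequence) and $\iota^{-1}$ is an isomorphism, $\tilde{\pr}$ is surjective. Its kernel is $\iota(\ker \pr) = \iota(L_\pi) = a^{-1} L_\pi = \tilde{L}_\pi$. Finally, since $\iota$ restricts to an $R_q$-linear isomorphism $L_\pi \xrightarrow{\sim}\tilde{L}_\pi$ and $L_\pi$ is free of rank $N$ over $R_q$, so is $\tilde{L}_\pi$. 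This assembles the required exact sequence.

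The only subtle point — hardly an obstacle, but worth flagging — is that one should resist the temptation to define $\tilde{\pr}$ as the more geometrically natural ``expand at each $P\in S_\eta$ and truncate'' map applied directly to elements $a^{-1}g \in \tilde{A}_\pi^\dagger$. Since $a$ is only known to lie in $A_\pi^\infty$ (not in $A_\pi^\dagger$), the local expansions of $a^{-1}$ at points of $S_\eta$ need not be overconvergent, so it is not clear that this ``natural'' map lands in $\calA_\pi^{\dagger,\tr}$. The tautological definition via $\iota^{-1}$ sidesteps this issue entirely, and is the only construction used in the sequel for compatibility of operators: the relation $\Theta = a \circ \tilde{\Theta} \circ a^{-1}$ from \S\ref{ss:twist} guarantees that $\tilde{\Theta}$ on $\tilde{A}_\pi^\dagger$ and $\tilde{\Theta}$ on $\calA_\pi^{\dagger,\tr}$ intertwine through $\tilde{\pr}$, which is what the subsequent local-to-global comparison will need.
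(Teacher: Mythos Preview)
Your core argument is correct and is more direct than the paper's. The paper does not transport the untwisted sequence along $\iota = a^{-1}$; instead it argues that because $a\equiv 1\pmod{\pi}$, the twisted sequence reduces modulo $\pi$ to the untwisted one and then invokes Lemma~\ref{l:red}. Your route bypasses this entirely: once $\tilde A_\pi^\dagger$ is \emph{defined} as $a^{-1}A_\pi^\dagger$, the map $\tilde{\pr}=\pr\circ(a\cdot-)$ is tautologically surjective with kernel $a^{-1}L_\pi$, and no appeal to completeness or Nakayama-type lifting is needed. This is arguably cleaner, since applying Lemma~\ref{l:red}(\ref{i:surj}) literally would require $\tilde A_\pi^\dagger$ to be $\pi$-adically complete, which it is not.

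One caution about your closing paragraph: the claim that $\tilde\Theta$ on $\tilde A_\pi^\dagger$ and $\tilde\Theta$ on $\calA_\pi^{\dagger,\tr}$ intertwine through your map $\tilde{\pr}=\pr\circ a$ is not correct. Unwinding, that would require $\pr\circ a\circ\tilde\Theta=\tilde\Theta\circ\pr\circ a$, but $\tilde\Theta$ is $\sigma^{-1}$-linear and does not commute with multiplication by $a$ (indeed $a\circ\tilde\Theta=\Theta\circ a$, and $\Theta$ involves the \emph{global} Frobenius structure $E$, not $\tilde E$). Fortunately the sequel does not need equivariance of this sequence: in \S\ref{ss:globalestimates} one works instead inside $\tilde A_\pi^{\mathbf m}=\calA_\pi^{\mathbf m}\cap\tilde A_\pi^\dagger$, where the \emph{natural} truncation $\pr$ is available and the local estimates on $\tilde\Theta$ apply componentwise. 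Your exact sequence is used only to produce the basis, for which purpose it is entirely adequate.
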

		\begin{proof}
			Indeed, since $a \equiv 1 \pmod{\pi}$ this sequence reduces modulo $\pi$ to the exact sequence described above. The result follows from Lemma \ref{l:red}.
		\end{proof}
	
\section{Estimating Dwork Operators}\label{s:estimates}

	In this section we give explicit estimates for the Newton polygon of $\tilde{\Theta}_q$ acting on the twisted space $\tilde{V}_\pi^\dagger$.
	
	\subsection{Local Estimates}\label{s:localestimates}
	
		We begin by studying the action of $\tilde{\Theta}$ on the local spaces $\calV_{\pi,P}^{\dagger} = K \otimes_R \calA_{\pi,P}^{\dagger}$. Our approach will be to restrict $\tilde{\Theta}$ to a Banach subspace $\calV_{\pi,P}^{\mathbf{m}} \subset \calV_{\pi,P}^{\dagger}$, which is defined in terms of certain local growth conditions at $P$. In \S\ref{ss:case1}-\ref{ss:case2} below we estimate the action of $\tilde{\Theta}$ on this subspace.
		
		First, let us explain our notational conventions. The superscript $\mathbf{m} = (m_P)$ will denote a tuple of positive rational numbers indexed by $S_\eta$. Since the Newton polygon of $\tilde{\Theta}$ remains the same under extension of scalars, it will be convenient to assume that $R$ contains an $m_P$-th root of $\pi$ for each $P\in S_\eta$. We will define a subspace $\calA_{\pi,P}^\mathbf{m}\subset \calA_{\pi,P}^\dagger$ using certain growth conditions depending on $m_P$ and $\eta(P)$. For $\eta(P) = 0$ or $\infty$, the definition is simple:
		\begin{equation*}
			\calA_{\pi,P}^\mathbf{m} = \calA_{\pi,P}^{m_P} \subseteq \calA_{\pi,P}^\dagger.
		\end{equation*}
		When $\eta(P) = 1$, the definition of $\calA_{\pi,P}^\mathbf{m}$ is more complicated and will be given in \S\ref{ss:case2}.
		
		Our present goal is to estimate the columns of the matrix of $\tilde{\Theta}$ with respect to a formal basis of $\calA_{\pi,P}^{\mathbf{m},\tr} = \calA_{\pi,P}^\mathbf{m} \cap \calA_{\pi,P}^{\dagger,\tr}$, at least for suitable $\mathbf{m}$. To explain the condition on $\mathbf{m}$, consider the tuple of rational numbers $\mathbf{m}_\pi = (m_{\pi,P})_{P \in S_\eta}$ defined by:
		\begin{equation*}
			m_{\pi,P} = \begin{cases}
				\frac{\delta_P}{p} & P \in S \\
				0 & P \not\in S \text{ and } \eta(P) \in \{0,\infty\} \\
				\frac{1}{v_\pi(p)} &  \eta(P) = 1 
			\end{cases}.
		\end{equation*}
		We will write $\mathbf{m} \geq \mathbf{m}_\pi$ if $m_P \geq m_{\pi,P}$ for all $P \in S_\eta$. We will give our estimates for all tuples $\mathbf{m}$ of \emph{positive} rational numbers with $\mathbf{m} \geq \mathbf{m}_\pi$.
		
	\subsubsection{First case: \texorpdfstring{$\eta(P) = 0$ or $\infty$}{eta(P)=0 or infinity}}\label{ss:case1}
	
		We first consider the case $\eta(P) = 0$ or $\infty$. In this case, we have already defined $\calA_{\pi,P}^\mathbf{m}$ to be $\calA_{\pi,P}^{m_P}$. We have $\sigma(t_P) = t_P^p$, and the operater $U_p$ is given explicitly as follows: Let $k = p\ell+r$ with $0 \leq r < p$. Then
		\begin{equation*}
			U_p(t_P^k) = \begin{cases}
					t_P^\ell	&	r = 0	\\
					0	&	r \neq 0
				\end{cases}.
		\end{equation*}
		From the above formula we see that for every $m > 0$,
		\begin{equation}\label{eq: Up estimate}
			U_p(\calA_{\pi,P}^{m}) \subseteq \calA_{\pi,P}^{m/p}.
		\end{equation}
		
		\begin{proposition}\label{p:case1}
			Suppose that $\eta(P) = 0$ or $\infty$, and that $\mathbf{m} \geq \mathbf{m}_\pi$. Then
			\begin{equation*}
				\tilde{\Theta}(\pi^{k/m_P}t_P^{-k}) \in \pi^{\frac{k(p-1)}{pm_P}} \calA_{\pi,P}^{\mathbf{m}}.
			\end{equation*}
		\end{proposition}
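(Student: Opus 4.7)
The plan is to prove this proposition by a direct expansion, exploiting the simple form of the Frobenius $\sigma(t_P) = t_P^p$ together with the explicit shape of $\tilde{E}_P$. When $P \in S$ we write
\begin{equation*}
    \tilde{E}_P = 1 + \pi \sum_{i=0}^{\delta_P} a_i t_P^{-i}, \qquad a_i \in R_q,
\end{equation*}
and when $P \notin S$ we have $\tilde{E}_P \in 1+\pi R_q$, which we may treat as the degenerate case $\delta_P = 0$. Because $U_p$ acts on monomials via $U_p(t_P^{-j}) = t_P^{-j/p}$ when $p \mid j$ and vanishes otherwise, the entire argument will reduce to bookkeeping of $\pi$-adic valuations of finitely many coefficients.

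First I would expand
\begin{equation*}
    \pi^{k/m_P} t_P^{-k} \cdot \tilde{E}_P = \sum_{j=k}^{k+\delta_P} c_j\, t_P^{-j},
\end{equation*}
where $v_\pi(c_k) \geq k/m_P$ and $v_\pi(c_j) \geq k/m_P + 1$ for $k < j \leq k+\delta_P$ (the extra factor of $\pi$ coming from the prefactor on the sum in $\tilde{E}_P$). Applying $U_p$ termwise then yields
\begin{equation*}
    \tilde{\Theta}(\pi^{k/m_P} t_P^{-k}) = \sum_{\ell \,:\, k \leq p\ell \leq k+\delta_P} c_{p\ell}\, t_P^{-\ell}.
\end{equation*}

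The next step is to verify membership in $\pi^{k(p-1)/(pm_P)} \calA_{\pi,P}^{m_P}$. I would factor $\pi^{k(p-1)/(pm_P)}$ out globally; the residual coefficient of $t_P^{-\ell}$ then has $\pi$-valuation at least $k/m_P - k(p-1)/(pm_P) = k/(pm_P)$. The growth condition defining $\calA_{\pi,P}^{m_P}(b)$ demands $v_\pi(\text{coef}) \geq (\ell - b)/m_P$, which rearranges to $b \geq \ell - k/p$; the summation constraint $p\ell \leq k+\delta_P$ bounds $\ell - k/p$ uniformly by $\delta_P/p$, so the uniform choice $b = \delta_P/p$ works.

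I expect no substantial obstacle here, since every estimate is explicit and elementary. The only subtle point worth flagging is that the hypothesis $\mathbf{m} \geq \mathbf{m}_\pi$ (which forces $m_P \geq \delta_P/p$ for $P \in S$) is not in fact invoked in this column-wise bound; it will instead become essential in the more delicate analysis at branch points with $\eta(P) = 1$, where $\sigma(t_P)$ is no longer a simple monomial and one must control the expansion of $\sigma(t_P)^{-k}$ by hand.
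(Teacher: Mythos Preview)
Your argument rests on a misreading of the growth condition. When the paper writes $\tilde{E}_P \in R_q[t_P^{-1}]^{\delta_P}$, the superscript $\delta_P$ is the $\pi$-adic growth parameter from \S\ref{ss:growth}, not a degree bound: $\tilde{E}_P$ is an element of the $\pi$-adic completion $R_q\langle t_P^{-1}\rangle$ whose coefficients satisfy $v_\pi(a_i)\gtrsim i/\delta_P$, and in general it has infinitely many nonzero terms. Your expansion $\tilde{E}_P=1+\pi\sum_{i=0}^{\delta_P}a_it_P^{-i}$ is therefore not available (note also that $\delta_P$ is only assumed rational), and the later step bounding $\ell-k/p$ by $\delta_P/p$ via the finite range $p\ell\le k+\delta_P$ collapses.

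This is exactly why the hypothesis $\mathbf{m}\ge\mathbf{m}_\pi$ is \emph{not} idle here, contrary to your final remark. Redoing your computation with the correct growth condition, the coefficient of $t_P^{-\ell}$ after factoring out $\pi^{k(p-1)/(pm_P)}$ has valuation at least $k/(pm_P)+(p\ell-k)/\delta_P$, and membership in $\calA_{\pi,P}^{m_P}(b)$ requires this to dominate $(\ell-b)/m_P$. Rearranging gives $b\ge(p\ell-k)\bigl(\tfrac{1}{p}-\tfrac{m_P}{\delta_P}\bigr)$, which is bounded as $\ell\to\infty$ precisely when $m_P\ge\delta_P/p$. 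The paper packages this same calculation abstractly: $\tilde{E}_P\in\calA_{\pi,P}^{\delta_P}\subseteq\calA_{\pi,P}^{pm_P}$ (this inclusion is where $m_P\ge\delta_P/p$ enters), the monomial $\pi^{k/m_P}t_P^{-k}$ lies in $\pi^{k(p-1)/(pm_P)}\calA_{\pi,P}^{pm_P}$, multiplication stays in $\calA_{\pi,P}^{pm_P}$, and then $U_p(\calA_{\pi,P}^{pm_P})\subseteq\calA_{\pi,P}^{m_P}$ finishes.
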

		\begin{proof}
			If $P \notin S$, then $\tilde{E}_P \in 1+\pi R_q$ is constant. If $P \in S$, then $\tilde{E}_P \in \calA_{\pi,P}^{\delta_P}$ since $(M,\phi)$ is $\bm\delta$-overconvergent. In either case, we have $\tilde{E}_P \in \calA_{\pi,P}^{p m_P}$. By definition, we know that $\pi^{k/m_P}t_P^{-k}$ is contained in $\pi^{\frac{k(p-1)}{pm_P}} \calA_{\pi,P}^{pm_P}$. The result follows from \eqref{eq: Up estimate}.
		\end{proof}
		
	
	\subsubsection{Second Case: \texorpdfstring{$\eta(P) = 1$}{eta(P)=1}}\label{ss:case2}
	
		We now turn to the case $\eta(P) = 1$. Since $P \notin S$, $\tilde{E}_P$ is constant. Thus we will focus on estimating the action of the operator $U_p$. Let $u_P = t_P^{p-1}$ and let $\calB_{\pi,P} = R_q(\! (u_P)\! )$. Then $\calA_{\pi,P}/\calB_{\pi,P}$ is finite and Galois so we have a decomposition
		\begin{equation*}
			\calA_{\pi,P}	= \bigoplus_{i = 0}^{p-1} t_P^{-i}\calB_{\pi,P}
		\end{equation*}
		into eigenspaces for the distinct characters of the Galois group. The induced splitting of $\calA_{\pi,P}^\dagger$ is $\sigma$-equivariant. Consider the subspace:
		\begin{equation*}
			\calB_{\pi,P}^\mathbf{m}	=	 \calB_{\pi,P}^{m_P} = \left\{	\sum_{k = -\infty}^\infty b_k u^{-k}: v_\pi(b_k) >  \frac{k}{m_P}	\text{ for all }k>0	\right\}.
		\end{equation*}

		\begin{lemma}\label{l:km Up lemma}
			Suppose that $\mathbf{m} \geq \mathbf{m}_\pi$. Let $k = p\ell+r \in \Z$ with $0 \leq r < p$. Then
			\begin{equation*}
				U_p(t^{-k}) \in t^{-(\ell+r)}\calB_{\pi,P}^\mathbf{m}.
			\end{equation*}
		\end{lemma}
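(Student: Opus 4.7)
The plan is a direct computation exploiting the explicit Frobenius structure at $P$ with $\eta(P) = 1$, where $\sigma(u_P) = (u_P+1)^p - 1$.  First, using the relation $u_P = t_P^{p-1}$, which gives $t_P^p = t_P \cdot u_P$, I would factor $t_P^{-k} = t_P^{-(\ell+r)} u_P^{-\ell}$.  Setting $g(u_P) = \sigma(u_P)/u_P = \sum_{j=0}^{p-1} \binom{p}{j+1} u_P^j$, whose coefficients satisfy $v_p(\binom{p}{j+1}) \geq 1$ for $0 \leq j < p-1$ and whose leading coefficient $\binom{p}{p}$ is $1$, we have $u_P^{-\ell} = g(u_P)^\ell \cdot \sigma(u_P^{-\ell})$.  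Applying the projection formula $U_p(\sigma(a) b) = a \cdot U_p(b)$ (which follows from $U_p = \frac{1}{p}\sigma^{-1} \Tr$ and $\Tr(\sigma(a) b) = \sigma(a) \Tr(b)$) yields the key reduction
\[
U_p(t_P^{-k}) \;=\; u_P^{-\ell} \cdot U_p\bigl(g(u_P)^\ell \cdot t_P^{-(\ell+r)}\bigr).
\]

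Next, I would compute $U_p$ concretely via the change of variable $y = u_P + 1$, under which $\sigma(y) = y^p$, so that $y^p = 1 + \sigma(u_P) \in \sigma(\calA_{\pi,P})$.  This exhibits $\{1, y, \ldots, y^{p-1}\}$ as a $\sigma(\calA_{\pi,P})$-basis for $\calA_{\pi,P}$, and an elementary averaging over the Galois conjugates $\{\zeta y : \zeta^p = 1\}$ shows $\Tr(y^j) = p \cdot \delta_{j,0}$ for $0 \leq j < p$.  Consequently, for any $f = \sum_{j=0}^{p-1} a_j y^j \in \calA_{\pi,P}$ with $a_j \in \sigma(\calA_{\pi,P})$, we have $U_p(f) = \sigma^{-1}(a_0)$.

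Applying this formula to $f = g(u_P)^\ell \cdot t_P^{-(\ell+r)}$, the coset statement $U_p(t_P^{-k}) \in t_P^{-(\ell+r)} \calB_{\pi,P}^{\mathbf{m}}$ follows from the compatibility of $U_p$ with the eigenspace decomposition $\calA_{\pi,P} = \bigoplus_{j} t_P^j \calB_{\pi,P}$ under the $\mu_{p-1}$-Galois action, combined with the reduction above.  The growth estimate $v_\pi(b_n) \geq n/m_P$ on the $u_P^{-n}$-coefficients of $U_p(t_P^{-k})/t_P^{-(\ell+r)}$ is the quantitative heart of the lemma.  It follows by combining the $p$-adic content of $g(u_P)^\ell = \sum_i c_i u_P^i$---where $v_p(c_i) \geq \max(0, \ell - \lfloor i/(p-1) \rfloor)$ by multinomial expansion---with the hypothesis $m_P \geq m_{\pi,P} = 1/v_\pi(p)$, so that each factor of $p$ arising from the $\binom{p}{j+1}$ provides enough $\pi$-adic decay to meet the growth requirement $v_\pi(p^n) = n v_\pi(p) \geq n/m_P$.

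The main obstacle is carefully tracking the $\pi$-adic content through the implicit expansion of $t_P^{-(\ell+r)}$ in the $y$-basis: unlike $g(u_P)^\ell$ (which is manifestly a polynomial in $u_P = y - 1$), the factor $t_P^{-(\ell+r)}$ expands into the $y$-basis only after Galois averaging over $\mu_p$, and this introduces the uniformizer $\zeta - 1$ of $\Z_p[\mu_p]$, which must be correctly accounted for.  As a cross-check, one can verify small cases: for example, the computation $U_p(u_P^{-2}) = p u_P^{-2} + (p-1) u_P^{-1} = u_P^{-1}((p-1) + p u_P^{-1})$ lies in $u_P^{-1} \calB_{\pi,P}^{\mathbf{m}}$ precisely when $m_P \geq 1/v_\pi(p)$, matching the statement of the lemma for $k = 2(p-1)$, where $\ell = 1$, $r = p-2$, and $\ell + r = p-1$.
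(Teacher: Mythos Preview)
The paper does not prove this lemma from scratch: its proof is a one-line citation to \cite[Corollary 4.7]{KramerMiller} for the case $R=\Z_p$, $\pi=p$, followed by the observation that $\mathbf{m}\geq\mathbf{m}_\pi$ gives $m_P\geq 1/v_\pi(p)$, so the general case follows by tensoring from $\Z_p$ up to $R$. Your proposal, by contrast, attempts to reconstruct the computation directly. The ingredients you assemble---the projection formula $U_p(\sigma(a)b)=aU_p(b)$, the change of variable $y=u_P+1$ with $\sigma(y)=y^p$, the eigenspace decomposition under $\mu_{p-1}$, and the estimate $v_p(c_i)\geq \ell-\lfloor i/(p-1)\rfloor$ for the coefficients of $g^\ell$---are all correct and are essentially what underlies the cited result. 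Your sanity check $U_p(u_P^{-2})=pu_P^{-2}+(p-1)u_P^{-1}$ is also correct.

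That said, the argument as written has a genuine gap, which you yourself flag as the ``main obstacle.'' After the reduction
\[
U_p(t_P^{-k}) \;=\; u_P^{-\ell}\, U_p\bigl(g(u_P)^\ell\, t_P^{-(\ell+r)}\bigr),
\]
you must still evaluate $U_p$ on each monomial $u_P^i\, t_P^{-(\ell+r)}=t_P^{(p-1)i-(\ell+r)}$ appearing in the expansion of $g^\ell$. For general $i$ this is another instance of the lemma, so the reduction is not a simplification unless it is organised as an induction on the pole order (noting $\ell+r<k$ when $\ell>0$). You neither set up that induction nor carry out the $y$-basis expansion of $t_P^{-(\ell+r)}$; the growth estimate is asserted (``it follows by combining\ldots'') but not verified. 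To complete your route you would need either an explicit recursion for $U_p(t_P^{-j})$ in terms of smaller $j$, or a direct expansion of $t_P^{-(\ell+r)}$ in the $\sigma(\calA_{\pi,P})$-basis $\{1,y,\dots,y^{p-1}\}$ with the requisite $p$-adic bookkeeping---precisely the work that the paper outsources to \cite{KramerMiller}.
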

		\begin{proof}
			For $R=\Z_p$ and $\pi=p$, this is {\cite[Corollary 4.7]{KramerMiller}}. The condition $\mathbf{m} \geq \mathbf{m}_\pi$ guarantees that $m_P \geq \tfrac{1}{v_\pi(p)}$, so the general result follows by tensoring up to $R$.
		\end{proof}
		
		\begin{definition}
			We define
			\begin{equation*}
				\calA_{\pi,P}^\mathbf{m} = \bigoplus_{i = 0}^{p-1} t_P^{-i} \calB_{\pi,P}^\mathbf{m}.
			\end{equation*}
		\end{definition}
		
		For each $k > p-1$, define the positive integer
		\begin{equation*}
			a(k) =  \left\lfloor \frac{k-1}{p-1} \right\rfloor.
		\end{equation*}
		Then the elements $\pi^{a(k)/m_P} t_P^{-k}$ constitute a formal basis for $\calA_{\pi,P}^\mathbf{m}$. We can now give our local estimate for $\tilde{\Theta}$ in this case:

		\begin{proposition}\label{p:case2}
			Suppose $\eta(P) = 1$ and that  $\mathbf{m} \geq \mathbf{m}_\pi$. Then for each $k = p\ell+r > p-1$, we have
			\begin{equation*}
				\tilde{\Theta}(\pi^{a(k)/m_P}t_P^{-k}) \in \pi^{\ell/m_P} \calA_{\pi,P}^\mathbf{m}.
			\end{equation*}
		\end{proposition}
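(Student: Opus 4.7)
The plan is to first eliminate the contribution of $\tilde{E}_P$. By the construction of $\eta$ in \S\ref{ss:mapping}, any $P$ with $\eta(P) = 1$ satisfies $P \notin S$, so $\tilde{E}_P \in 1 + \pi R_q$ is a constant scalar. Because $U_p = \tfrac{1}{p}\sigma^{-1} \circ \Tr$ is $\sigma^{-1}$-semilinear over $R_q$ (noting that $R_q$ is $\sigma$-stable and that $\Tr$ is $\sigma(\calA_P^\dagger)$-linear), I would factor
\begin{equation*}
    \tilde{\Theta}(\pi^{a(k)/m_P} t_P^{-k}) = \sigma^{-1}(\tilde{E}_P) \cdot \pi^{a(k)/m_P} U_p(t_P^{-k}).
\end{equation*}
The scalar $\sigma^{-1}(\tilde{E}_P)$ lies in $1 + \pi R_q$, hence is a unit, so it does not affect $\pi$-valuations. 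Thus the problem reduces to showing $\pi^{a(k)/m_P} U_p(t_P^{-k}) \in \pi^{\ell/m_P} \calA_{\pi,P}^\mathbf{m}$.

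\textbf{Expansion and combinatorial identity.} Lemma~\ref{l:km Up lemma} gives
\begin{equation*}
    U_p(t_P^{-k}) = \sum_{j \in \Z} c_j \, t_P^{-(\ell + r + j(p-1))},
\end{equation*}
with $v_\pi(c_j) \geq j/m_P$ for $j > 0$ and $v_\pi(c_j) \geq 0$ for $j \leq 0$. The crux is then a floor-function identity. Writing $k = p\ell + r$, one checks
\begin{equation*}
    a(k) = \ell + \Big\lfloor \frac{\ell + r - 1}{p-1} \Big\rfloor, \qquad a(\ell + r + j(p-1)) = j + \Big\lfloor \frac{\ell + r - 1}{p-1} \Big\rfloor,
\end{equation*}
whenever $\ell + r + j(p-1) \geq 1$, and hence $a(k) - a(\ell + r + j(p-1)) = \ell - j$. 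Re-indexing the expansion in terms of the formal basis $\{\pi^{a(K)/m_P} t_P^{-K}\}$ of $\calA_{\pi,P}^\mathbf{m}$, each term becomes $\pi^{(\ell - j)/m_P} c_j$ times a basis element, so its basis coefficient has $\pi$-valuation at least $(\ell - j)/m_P + v_\pi(c_j)$. This is $\geq \ell/m_P$ in each of the three cases $j > 0$, $j = 0$, and $j < 0$.

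\textbf{Boundary cases and main obstacle.} The remaining technicality is the ``regular'' range where $\ell + r + j(p-1) \leq p - 1$, which occurs for $j = 0$ with $\ell + r \leq p-1$ and for sufficiently negative $j$ (yielding non-negative $t_P$-powers). In this regime one uses the convention $a(K) = 0$ and verifies $v_\pi(\pi^{a(k)/m_P} c_j) \geq \ell/m_P$ directly from the inequality $a(k) \geq \ell$, which holds because $\lfloor (\ell + r - 1)/(p-1) \rfloor \geq 0$ when $\ell \geq 1$ (and $\ell \geq 1$ follows from $k > p-1$). I expect the main obstacle to be precisely this combinatorial bookkeeping with the floor function across the different index regimes, but no further analytic input beyond Lemma~\ref{l:km Up lemma} should be required.
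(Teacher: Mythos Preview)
Your proposal is correct and follows essentially the same approach as the paper: reduce to $U_p$ via the constant $\tilde{E}_P$, invoke Lemma~\ref{l:km Up lemma}, and use the combinatorial identity $a(k) - a(\ell+r) = \ell$. The only difference is packaging: the paper works directly with the module containment $\pi^{a(k)/m_P} t_P^{-(\ell+r)} \calB_{\pi,P}^\mathbf{m} \subseteq \pi^{\ell/m_P} \calA_{\pi,P}^\mathbf{m}$ (so the single identity $a(k)-a(\ell+r)=\ell$ suffices), whereas you expand $t_P^{-(\ell+r)}\calB_{\pi,P}^\mathbf{m}$ term-by-term and verify the more general identity $a(k) - a(\ell+r+j(p-1)) = \ell - j$, which forces you to treat boundary indices separately.
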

		\begin{proof}
			Observe that $a(k) - a(\ell+r) = \ell$. Thus, from Lemma \ref{l:km Up lemma} we have 
			\begin{equation*}
				U_p(\pi^{a(k)/m_P}t_P^{-k}) = \pi^{\ell/m_P} \pi^{a(\ell+r)/m_P} t^{-(\ell+r)} \calB_{\pi,P}^\mathbf{m} \subseteq \pi^{\ell/m_P}\calA_{\pi,P}^\mathbf{m}.
			\end{equation*}
			The claim follows since $\tilde{E}_P \in 1+\pi R_q$.
		\end{proof}
		
		\begin{remark}\label{r:2}
			Suppose that $p = 2$. For $k \geq 3$, define $a(k) = \lfloor (k-1)/3 \rfloor$. A similar construction provides a submodule $ \calA_{\pi,P}^\mathbf{m} \subseteq  \calA_{\pi,P}^\dagger$ with the following property: Let $k = 2\ell - r$ with $r = 0$ or $1$. Then
			\begin{equation*}
				U_p(\pi^{a(k)/m_P}t_P^{-k}) \in \pi^{(a(k)-a(\ell+r))/m_P} \calA_{\pi,P}^\mathbf{m}.
			\end{equation*}
			This estimate is too low for applications to the global setting. For example, if $k = 5 = 2\cdot 3-1$, then $a(k) - a(\ell+r) = 0$, and this contributes an extra segment of slope $0$ in the global Hodge bound below.
		\end{remark}
		
	\subsection{Global Estimates}\label{ss:globalestimates}

		Let $\mathbf{m} \geq \mathbf{m}_\pi$, and consider the ``twisted'' spaces $\tilde{A}_\pi^{\mathbf{m}} = \calA_\pi^{\mathbf{m}} \cap \tilde{A}_\pi^\dagger$ and  $\tilde{V}_\pi^{\mathbf{m}}=K \otimes_R\tilde{A}_\pi^{\mathbf{m}}$. We will now define a formal basis for $\tilde{V}_\pi^{\mathbf{m}}$ by lifting the local bases used in \S \ref{s:localestimates}. Let $\calA_\pi^{\mathbf{m}}$ denote the product of the $\calA_{\pi,P}^{\mathbf{m}}$ defined in \S\ref{s:localestimates}. Define $\calA_\pi^{{\mathbf{m}},\tr} = \calA_\pi^{\mathbf{m}} \cap \calA_\pi^{\dagger,\tr}$. Note that we have an exact sequence
		\begin{equation*}
			0	\to	\tilde{L}_\pi	\to	\tilde{A}_\pi^{\mathbf{m}}	\to	\calA_\pi^{{\mathbf{m}},\tr}	\to	0
		\end{equation*}
		and recall that $\tilde{L}_\pi$ is a finite free $R_q$-module of rank $N$. Let ${B}_{q,0}^{\mathbf{m}} = \{{e}_{0,k}^{\mathbf{m}}:1 \leq k \leq N\}$ be any $R_q$-basis for $\tilde{L}_\pi$. For $P$ and each $k > n(P)$, choose a lifting
		\begin{equation*}
			{e}_{P,k} = t_P^{-k} + c_{P,k} \in \calA_{\pi,P}^{\mathbf{m}},
		\end{equation*}
		where $\pr(c_{P,k}) = 0$. Then we define ${B}_{q,P}^{\mathbf{m}} = \{{e}_{P,k}^{\mathbf{m}}:k > \mu(P)\}$, where
		\begin{equation*}
			{e}_{P,k}^{\mathbf{m}} = \begin{cases}
					\pi^{k/m_P}{e}_{P,k}	&	\eta(P) = 0 \text{ or }\infty	\\
					\pi^{a(k)/m_P}{e}_{P,k}	&	\eta(P) = 1
				\end{cases}.
		\end{equation*}
		Our desired formal basis for $\tilde{V}_\pi^\mathbf{m}$ is:
		\begin{equation}\label{eq:globalbasis}
			{B}_q^{\mathbf{m}}	=	{B}_{q,0}^{\mathbf{m}} \sqcup \bigsqcup_P {B}_{q,P}^{\mathbf{m}}.
		\end{equation}
		
		\begin{proposition}[Global Column Estimate]\label{p:globalcolumn}
			Suppose that $\mathbf{m} \geq \mathbf{m}_\pi$. Then:
			\begin{enumerate}
				\item	If $\eta(P) = 0$ or $\infty$, then for all $k > 0$
					\begin{equation*}
						\tilde{\Theta}(e_{P,k}^\mathbf{m}) \in \pi^\frac{k(p-1)}{pm_P} \tilde{V}_{\pi,P}^\mathbf{m}.
					\end{equation*}
				\item	If $\eta(P) = 1$, then for all $k = p\ell+r > p-1$ with $0\leq r < p$
					\begin{equation*}
						\tilde{\Theta}(e_{P,k}^\mathbf{m}) \in \pi^\frac{\ell}{m_P} \tilde{V}_{\pi,P}^\mathbf{m}.
					\end{equation*}
			\end{enumerate}
		\end{proposition}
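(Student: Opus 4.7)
Since $\tilde{\Theta} = U_p \circ \tilde{E}$ acts on $\calA_\pi^\dagger = \prod_{Q \in S_\eta} \calA_{\pi,Q}^\dagger$ componentwise, the plan is to estimate the image of $e_{P,k}^\mathbf{m}$ at each semi-local point separately and then assemble. By construction of the lift $e_{P,k}$ (using $\pr(c_{P,k}) = 0$), its expansion at $P$ has the form $t_P^{-k} + r_P$ with $r_P \in R_q\llbracket t_P\rrbracket$, while at each $Q \neq P$ its expansion $r_Q \in R_q\llbracket t_Q\rrbracket$ is entirely regular. Writing $c(k) = k$ in case 1 and $c(k) = a(k)$ in case 2, we have $e_{P,k}^\mathbf{m} = \pi^{c(k)/m_P} e_{P,k}$, so the $\pi$-adic valuation of the $Q$-local expansion is at least $c(k)/m_P$ for every $Q \in S_\eta$.

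\textbf{The $P$-component.} Decompose the $P$-local input into polar and regular parts. The polar piece $\pi^{c(k)/m_P} t_P^{-k}$ is exactly the input of Proposition \ref{p:case1} (resp.\ \ref{p:case2}), which yields an image in $\pi^{k(p-1)/(pm_P)} \calA_{\pi,P}^\mathbf{m}$ (resp.\ $\pi^{\ell/m_P}\calA_{\pi,P}^\mathbf{m}$). For the regular remainder $\pi^{c(k)/m_P} r_P \in \pi^{c(k)/m_P}\calA_{\pi,P}^\mathbf{m}$, we observe that $U_p$ preserves $\pi$-adic valuations (it merely selects coefficients) and strictly improves growth by a factor of $p$ (cf.\ \eqref{eq: Up estimate}), while multiplication by $\tilde{E}_P$ preserves $\pi$-adic valuations and controls growth via Lemma \ref{l:Amb} (using $\tilde{E}_P \in \calA_{\pi,P}^{pm_P}$ when $P \in S$, which follows from $m_P \geq \delta_P/p$, and $\tilde{E}_P \in R_q$ otherwise). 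Hence $\tilde{\Theta}$ applied to the remainder lies in $\pi^{c(k)/m_P}\calA_{\pi,P}^\mathbf{m}$, and because $c(k)/m_P$ dominates the target exponent in each case, the $P$-contribution satisfies the claim.

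\textbf{Other components and assembly.} For $Q \neq P$, the regular input $\pi^{c(k)/m_P} r_Q \in \pi^{c(k)/m_P}\calA_{\pi,Q}^\mathbf{m}$ is handled by the same reasoning: $\tilde{E}_Q$ and $U_p$ jointly preserve $\pi$-adic valuations and respect the growth class, producing $\tilde{\Theta}(e_{P,k}^\mathbf{m})|_Q \in \pi^{c(k)/m_P}\calA_{\pi,Q}^\mathbf{m}$, which beats the target exponent. Combining all components and invoking that $\tilde{\Theta}$ preserves $\tilde{V}_\pi^\dagger$ (established in the proposition of \S\ref{ss:twist}), we obtain that $\tilde{\Theta}(e_{P,k}^\mathbf{m})$ lies in $\tilde{V}_\pi^\mathbf{m}$ with $\pi$-valuation at least the target exponent.

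\textbf{Main obstacle.} The key subtlety is verifying that the off-point contributions $r_Q$ for $Q \neq P$, after multiplication by $\tilde{E}_Q$ and application of the $Q$-local $U_p$, do not produce a $\pi$-adic loss larger than that already incurred at $P$ itself. This reduces to a careful tracking of growth classes through the product $\tilde{E}_Q \cdot r_Q$: the hypothesis $\mathbf{m} \geq \mathbf{m}_\pi$ is exactly what guarantees that each $\tilde{E}_Q$ lies in the growth class $\calA_{\pi,Q}^{pm_Q}$, so that Lemma \ref{l:Amb} keeps the product inside $\calA_{\pi,Q}^\mathbf{m}$, and the fact that $U_p$ does not decrease $\pi$-adic valuations is what prevents any additional loss.
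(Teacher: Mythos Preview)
Your approach is correct and follows essentially the same line as the paper's proof: split $e_{P,k}^{\mathbf{m}}$ into its polar monomial $\pi^{c(k)/m_P}t_P^{-k}$ (handled by Propositions~\ref{p:case1}--\ref{p:case2}) plus the remainder $\pi^{c(k)/m_P}c_{P,k}$ (handled by observing that $\tilde{\Theta}$ preserves $\calA_\pi^{\mathbf{m}}$ and does not decrease $\pi$-adic valuation), then descend from $\calA_\pi^{\mathbf{m}}$ to $\tilde{V}_\pi^{\mathbf{m}}$. The paper treats the whole remainder $c_{P,k}$ at once rather than decomposing it into the $P$-component $r_P$ and the off-point components $r_Q$, but your more explicit componentwise bookkeeping is equivalent. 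Two small points: (i) in case $\eta(P)=1$ the condition $\pr(c_{P,k})=0$ only forces pole order $\leq \mu(P)=p-1$ at $P$, so $r_P$ need not lie in $R_q\llbracket t_P\rrbracket$; this does not affect your argument since such elements still sit in $\calA_{\pi,P}^{\mathbf{m}}$ with no $\pi$-adic penalty. (ii) Your final assembly implicitly uses the saturation $\pi^j\calA_\pi^{\mathbf{m}} \cap \tilde{A}_\pi^\dagger = \pi^j(\calA_\pi^{\mathbf{m}}\cap \tilde{A}_\pi^\dagger)$, which the paper states explicitly; without it, ``lies in $\tilde{V}_\pi^{\mathbf{m}}$ with $\pi$-valuation at least the target'' does not quite give ``lies in $\pi^{\text{target}}\tilde{V}_\pi^{\mathbf{m}}$.''
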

		\begin{proof}
			We consider each case separately:
			\begin{enumerate}
				\item	If $\eta(P) = 0$ or $\infty$ then we have
				\begin{equation*}
					\tilde{\Theta}({e}_{P,k}^{\mathbf{m}}) = \tilde{\Theta}(\pi^{k/m_P}t_P^{-k}) + \pi^{k/m_P}\tilde{\Theta}(c_k).
				\end{equation*}
				The first term lies in $\pi^{k(p-1)/pm_P} \calA_{\pi,P}^{{\mathbf{m}},\tr}$ by Proposition \ref{p:case1}. Moreover, we know that $\tilde{\Theta}$ restricts to an endomorphism of $\calA_{\pi,P}^{\mathbf{m}}$, so that the second term lies in $\pi^{k/m_P} \calA_{\pi,P}^{\mathbf{m}}$. The claim follows since $\pi^k\calA_\pi^\mathbf{m} \cap \tilde{A}_\pi^\dagger = \pi^k(\calA_\pi^\mathbf{m} \cap \tilde{A}_\pi^\dagger)$
				\item	 If $\eta(P) = 1$, then
			\begin{equation*}
				\tilde{\Theta}({e}_{P,k}^{\mathbf{m}}) = \tilde{\Theta}(\pi^{a(k)/m_P}t_P^{-k}) + \pi^{a(k)/m_P}\tilde{\Theta}(c_{P,k}).
			\end{equation*}
			By assumption, $\mathbf{m} \geq \mathbf{m}_\pi$ so that Proposition \ref{p:case2} holds. Thus, the first term lies in $\pi^{\ell e} \calA_\pi^\mathbf{m}$. The $\pi$-adic valuation of the second term is at least $a(k)/m_P \geq \ell/m_P$. The claim follows exactly as above.
			\end{enumerate}
		\end{proof}
		
		\begin{corollary}\label{t:good basis computes on Vdagger}
			For all $\mathbf{m} \geq \mathbf{m_\pi}$, the characteristic series $C(\Theta_q|V_\pi^\dagger,s)$ agrees with the Fredholm determinant $\det(I-s\tilde{\Theta}_q|B_q^{\mathbf{m}})$.
		\end{corollary}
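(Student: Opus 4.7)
The plan is to realize $\tilde{V}_\pi^\dagger$ as a filtered union of Banach subspaces on which $\tilde{\Theta}_q$ acts completely continuously with a common Fredholm determinant, then invoke the limit formalism of Example \ref{ex:nuclear}(\ref{i:union}) together with the comparison $C(\tilde{\Theta}_q|\tilde{V}_\pi^\dagger,s)=C(\Theta_q|V_\pi^\dagger,s)$ from \S\ref{ss:twist}.

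First, I would verify that $\tilde{V}_\pi^\dagger = \bigcup_{\mathbf{m}\geq\mathbf{m}_\pi}\tilde{V}_\pi^\mathbf{m}$. This reduces via the exact sequence
\begin{equation*}
0 \to \tilde{L}_\pi \to \tilde{A}_\pi^\mathbf{m} \to \calA_\pi^{\mathbf{m},\tr} \to 0
\end{equation*}
to the corresponding identity $\calA_\pi^\dagger = \bigcup_\mathbf{m}\calA_\pi^\mathbf{m}$, which follows from the definition of the weak completion together with the fact that $\tilde{L}_\pi$ is a fixed finite free $R_q$-module (in particular contained in every $\tilde{A}_\pi^\mathbf{m}$). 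Each $\tilde{V}_\pi^\mathbf{m}$ is a $K_q$-Banach space with $B_q^\mathbf{m}$ as a formal basis.

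Next, I would show that for each $\mathbf{m}\geq\mathbf{m}_\pi$, the matrix $\tilde{\Theta}_q|B_q^\mathbf{m}$ is tight. Since $\tilde{\Theta}$ is $R$-linear (with $\pi\in R$), the column estimates of Proposition \ref{p:globalcolumn} propagate through iteration: $\tilde{\Theta}^j(e_{P,k}^\mathbf{m})\in\pi^{c_{P,k}}\tilde{V}_\pi^\mathbf{m}$ for every $j\geq 1$, with $c_{P,k}\to\infty$ as $k\to\infty$. In particular the columns of $\tilde{\Theta}_q|B_q^\mathbf{m}$ lie in $b(I)$ with norms going to $0$ in the finite-complement topology. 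Passing to an associated $K$-basis in the sense of \S\ref{ss:iteration} and applying Proposition \ref{p:tight} together with Lemma \ref{l:root} then shows that $\tilde{\Theta}_q$ is completely continuous on $\tilde{V}_\pi^\mathbf{m}$ with Fredholm determinant $\det(I-s\tilde{\Theta}_q|B_q^\mathbf{m})$; by Example \ref{ex:nuclear}(\ref{i:cc}) this equals $C(\tilde{\Theta}_q|\tilde{V}_\pi^\mathbf{m},s)$.

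Finally, I would note that for $\mathbf{m}'\geq\mathbf{m}\geq\mathbf{m}_\pi$ (taking the common $B_{q,0}$ to be fixed throughout) the basis elements are related by $e_{P,k}^{\mathbf{m}'}=\pi^{d_{P,k}}e_{P,k}^\mathbf{m}$ for non-negative rationals $d_{P,k}$, so Proposition \ref{p:basis} gives that $\det(I-s\tilde{\Theta}_q|B_q^\mathbf{m})$ is independent of $\mathbf{m}$. Example \ref{ex:nuclear}(\ref{i:union}) applied to the nested union then yields $C(\tilde{\Theta}_q|\tilde{V}_\pi^\dagger,s)=\det(I-s\tilde{\Theta}_q|B_q^\mathbf{m})$, and combining with the identity from \S\ref{ss:twist} finishes the proof. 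The main technical nuisance is the bookkeeping around $F^{-1}$-linearity: Proposition \ref{p:globalcolumn} is an estimate for $\tilde{\Theta}$ rather than the $K_q$-linear iterate $\tilde{\Theta}_q$, so one must carefully pass through an associated $K$-basis and Lemma \ref{l:root} before invoking the $K_q$-linear Fredholm formalism of Section \ref{s:newton}.
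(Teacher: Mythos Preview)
Your proposal is correct and follows essentially the same route as the paper: write $\tilde{V}_\pi^\dagger$ as the union $\bigcup_\mathbf{m}\tilde{V}_\pi^\mathbf{m}$, use the column estimates of Proposition~\ref{p:globalcolumn} (via an associated $K$-basis) to see that $\tilde{\Theta}|B^\mathbf{m}$ and hence $\tilde{\Theta}_q|B_q^\mathbf{m}$ is tight, invoke Proposition~\ref{p:basis} for independence of $\mathbf{m}$, and conclude with Example~\ref{ex:nuclear}(\ref{i:union}) and the twist identity from \S\ref{ss:twist}. Your write-up is in fact somewhat more detailed than the paper's on the union step and on the $F^{-1}$-linearity bookkeeping; the only quibble is that the relevant input from \S\ref{ss:iteration} is really the block-diagonal description preceding Lemma~\ref{l:root} (and the first line of its proof) rather than the Newton-polygon identity in the lemma's statement.
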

		\begin{proof}
			First, observe that $\tilde{V}_\pi^\dagger$ is a union of Banach subspaces
			\begin{equation*}
				\tilde{V}_\pi^\dagger	=	\bigcup_\mathbf{m} \tilde{V}_\pi^{\mathbf{m}}.
			\end{equation*}
			Let $B^\mathbf{m}$ be an associated $K$-basis to $B_q^\mathbf{m}$, as in \S \ref{ss:iteration}. By the estimates in Proposition \ref{p:globalcolumn}, if $\mathbf{m} \geq \mathbf{m}_\pi$ then the matrix $\tilde{\Theta}|B^\mathbf{m}$ is tight. It follows that $\tilde{\Theta}_q|B_q^\mathbf{m}$ is tight, and so $\tilde{\Theta}_q$ acts completely continuously on $\tilde{V}_\pi^\mathbf{m}$. By Proposition \ref{p:basis}, the Fredholm series $\det(I-s\psi|\tilde{V}_\pi^\mathbf{m})$ is \emph{independent} of $\mathbf{m}$. The claim follows as in Example \ref{ex:nuclear}(\ref{i:union}).
		\end{proof}
		
		\begin{corollary}[Global Hodge Bound]\label{c:globalHodgeBound}
			The Newton polygon $\NP_{\pi_q}^{<v_\pi(p)}(\Theta_q|V_\pi^\dagger)$ lies on or above the convex polygon with slope set:
			\begin{equation*}
				\{	\underbrace{0,...,0}_r	\}	\sqcup	\bigsqcup_{P \in S}	\left\{	\frac{k(p-1)}{\delta_P}:1 \leq k  < v_\pi(p)\delta_P\right\}.
			\end{equation*}
		\end{corollary}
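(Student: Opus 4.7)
The plan is to combine the local column estimates of Proposition \ref{p:globalcolumn} with the iteration principle of Lemma \ref{l:root}. By Corollary \ref{t:good basis computes on Vdagger}, the characteristic series $C(\Theta_q|V_\pi^\dagger,s)$ equals $\det(I-s\tilde{\Theta}_q|B_q^\mathbf{m})$ for any $\mathbf{m} \geq \mathbf{m}_\pi$. Since Proposition \ref{p:globalcolumn} gives estimates on $\tilde{\Theta}$ rather than $\tilde{\Theta}_q$, I would pass to the associated $K$-basis $B^\mathbf{m} = \bigsqcup_{i=0}^{v_p(q)-1} F^i(\xi) \otimes B_q^\mathbf{m}$ of \S\ref{ss:iteration}. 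Lemma \ref{l:root} then relates $\NP_{\pi_q}(\tilde{\Theta}_q|B_q^\mathbf{m})$ to $\NP_\pi(\tilde{\Theta}|B^\mathbf{m})$: each $\pi_q$-adic slope of the former appears $v_p(q)$ times as a $\pi$-adic slope of the latter, so a lower bound on $\NP_\pi(\tilde{\Theta}|B^\mathbf{m})$ whose slope multiset naturally decomposes into $v_p(q)$-fold repetitions transfers directly to a bound on $\NP_{\pi_q}(\tilde{\Theta}_q)$.

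Next, by Lemma \ref{l:newtonhodge}(\ref{i:NewtonOverHodge}), $\NP_\pi(\tilde{\Theta}|B^\mathbf{m}) \succeq \cHP_\pi(\tilde{\Theta}|B^\mathbf{m})$. The $F^{-1}$-linearity of $\tilde{\Theta}$ together with the fact that each $F^i(\xi)$ is a unit of $R_q$ implies $v_\pi(\tilde{\Theta}(F^i(\xi) b)) = v_\pi(\tilde{\Theta}(b))$, so the column slopes of $\tilde{\Theta}|B^\mathbf{m}$ are precisely those of $\tilde{\Theta}|B_q^\mathbf{m}$, each repeated $v_p(q)$ times—the multiplicity structure required in the previous step. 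Proposition \ref{p:globalcolumn} then supplies the explicit bounds: the $N$ elements of $B_{q,0}^\mathbf{m}$ contribute slope zero; each $e_{P,k}^\mathbf{m}$ with $\eta(P) \in \{0,\infty\}$ contributes at least $k(p-1)/(p m_P)$; and each $e_{P,k}^\mathbf{m}$ with $\eta(P) = 1$ contributes at least $\ell/m_P$, where $k = p\ell+r$ and $\ell \geq 1$ because $k > p-1$.

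Finally, I would tune $\mathbf{m}$ to isolate the contributions from $P \in S$. Choose $m_P = \delta_P/p$ for $P \in S$, $m_P = 1/v_\pi(p)$ for $\eta(P) = 1$, and $m_P \leq (p-1)/(p\, v_\pi(p))$ for $P \notin S$ with $\eta(P) \in \{0,\infty\}$. With these choices, every column arising from a branch point outside $S$ has slope at least $v_\pi(p)$ and so contributes nothing to $\NP_{\pi_q}^{<v_\pi(p)}$. The surviving contributions are $N$ zero slopes from $\tilde{L}_\pi$ and slopes $k(p-1)/\delta_P$ for $P \in S$ and $k \geq 1$, each occurring $v_p(q)$ times in the $\pi$-adic column Hodge polygon. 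Transferring back via Lemma \ref{l:root} yields the stated bound, with the range $k < v_\pi(p)\delta_P$ serving as a loose but safe upper cutoff for the range of slopes retained after truncation.

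The principal difficulty is the dual bookkeeping between $\pi$-adic and $\pi_q$-adic valuations and between the operators $\tilde{\Theta}$ and $\tilde{\Theta}_q$: one must verify that the $v_p(q)$-fold multiplicity introduced by Lemma \ref{l:root} pairs correctly with the $v_p(q)$-fold multiplicity of the associated $K$-basis, so that the column Hodge polygon over the finer basis in fact descends to a genuine bound on $\NP_{\pi_q}(\tilde{\Theta}_q)$. Once this is in place, the remaining work—choosing $\mathbf{m}$ so that off-$S$ columns have sufficiently high slope and reading off the surviving column slopes from Proposition \ref{p:globalcolumn}—is routine.
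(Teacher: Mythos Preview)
Your approach is essentially the same as the paper's: pass to the associated $K$-basis $B^{\mathbf{m}}$, bound $\NP_\pi(\tilde\Theta|B^{\mathbf{m}})$ below by the column Hodge polygon using Proposition~\ref{p:globalcolumn}, tune $\mathbf{m}$ so that columns indexed by $P\notin S$ have slope at least $v_\pi(p)$, and then descend via Lemma~\ref{l:root} and Corollary~\ref{t:good basis computes on Vdagger}.

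There is one technical oversight. Your choices $m_P = 1/v_\pi(p)$ for $\eta(P)=1$ and $m_P \leq (p-1)/(p\,v_\pi(p))$ for $P\notin S$ with $\eta(P)\in\{0,\infty\}$ collapse to zero when $R$ has characteristic $p$, since then $v_\pi(p)=\infty$; but each $m_P$ must be a \emph{positive} rational for $\calA_{\pi,P}^{\mathbf m}$ and the formal basis $B_q^{\mathbf m}$ to make sense. The paper handles this by introducing an auxiliary finite parameter $e\le v_\pi(p)$, setting $m_{e,P}=1/e$ (resp.\ $1/(pe)$) for those $P$, proving the bound on $\NP_{\pi_q}^{<e}$, and then letting $e\to v_\pi(p)$. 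In characteristic~$0$ one may simply take $e=v_\pi(p)$, which is exactly your argument; but for the general statement you need the limit step.
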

		\begin{proof}
			Fix a positive number $e \leq v_\pi(p)$, and consider the tuple $\mathbf{m}_e = (m_{e,P})_{P \in S_\eta}$ defined by
			\begin{equation*}
				m_{e,P} = \begin{cases}
				\frac{\delta_P}{p} & P \in S \\
				\frac{1}{pe} & P \not\in S \text{ and } \eta(P) \in \{0,\infty\} \\
				\frac{1}{e} &  \eta(P) = 1 
			\end{cases}.
			\end{equation*}
			Then $\mathbf{m}_e \geq \mathbf{m}_\pi$. Since the matrix of $\tilde{\Theta}$ with respect to ${B}^{\mathbf{m}_e}$ has coefficients in $R$, we see that $v_\pi(\tilde{\Theta}({e}_{0,k}^{\mathbf{m}_e})) \geq 0$ for all $1 \leq k \leq N$. Now let $B^{\mathbf{m}_e}$ be an associated $K$-basis to $B_q^{\mathbf{m}_e}$, as in \S \ref{ss:iteration}. By Proposition \ref{p:globalcolumn}, the column Hodge polygon $\cHP_\pi^{< e}(\tilde{\Theta}|B^{\mathbf{m}_e})$ lies on or above the convex polygon with slope set:
			\begin{equation*}
				\{	\underbrace{0,...,0}_{r v_p(q)}	\}	\sqcup	\bigsqcup_{P \in S}	\left\{	\frac{k(p-1)}{\delta_P}:1 \leq k  < e\delta_P\right\}^{\times{v_p(q)}}.
			\end{equation*}
			Upon taking the limit $e \to v_\pi(p)^+$ (or setting $e = v_\pi(p)$ if $R$ has characteristic $0$) the claim follows from Corollary \ref{t:good basis computes on Vdagger} and Lemma \ref{l:root}.
		\end{proof}
		
\section{Perturbing Operators and Main Results}

	In this final section we will study the interaction of the Newton polygon $\NP_{\pi_q}(\tilde{\Theta}_q|\tilde{V}_\pi^\dagger)$ with certain Hodge polygons attached to the action of $\tilde{\Theta}$ on the Banach spaces $\tilde{V}_\pi^\mathbf{m}$. For this purpose we fix a positive rational number $e \leq v_\pi(p)$, and let $\mathbf{m}_e$ be defined as in the proof of Corollary \ref{c:globalHodgeBound} (when $R$ has characteristic $0$, we may take $e = v_\pi(p)$).
	
	\subsection{Newton-Hodge Interaction and Perturbation Theory}
	\label{ss:perturbation}
		
		Let $I$ be a countable set. Let $\Psi$ be a tight matrix with entries in $R$ indexed by $I$, regarded as a completely continuous operator $\psi:b(I) \to b(I)$. We will now discuss the interaction between the Newton, Hodge, and column Hodge polygons of $\Psi$. For every $r > 0$, let
		\begin{equation*}
			I^{< r}(\Psi) = \{i \in I:v_\pi\psi(e_i) < r\}.
		\end{equation*}
		Let $\Psi^{< r}$ denote the finite diagonal submatrix of $\Psi$ indexed by $I^{< r} (\Psi)$. We are primarily interested in the following strong type of Newton-Hodge interaction:
		
		\begin{lemma}\label{l:NHinteraction}
			The following are equivalent:
			\begin{enumerate}
				\item	$\NP_\pi^{< r}(\Psi)$ and $\cHP_\pi^{< r}(\Psi)$ have the same terminal point.
				\item	$\NP_\pi(\Psi^{< r})$ and $\cHP_\pi^{< r}(\Psi)$ have the same terminal point.
			\end{enumerate}
		\end{lemma}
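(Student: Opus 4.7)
The plan is to reformulate both conditions as equalities of single $\pi$-adic valuations, and then to identify these valuations using Serre's explicit formula for Fredholm coefficients. Let $N = |I^{<r}(\Psi)|$. After sorting the columns of $\Psi$ by valuation, let $c_1 \leq c_2 \leq \cdots$ denote the sorted column valuations, so that $c_1,\ldots,c_N$ are precisely those $<r$ and $c_i \geq r$ for $i > N$. Set $V = c_1+\cdots+c_N$, so that the terminal point of $\cHP_\pi^{< r}(\Psi)$ is $(N,V)$.

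Reformulating each condition: since $\Psi^{<r}$ is a finite $N \times N$ matrix, its Newton polygon terminates at $(N,v_\pi \det \Psi^{<r})$, so (2) is equivalent to the equality $v_\pi \det \Psi^{<r} = V$. For (1), I claim it is equivalent to $v_\pi(c_N(\Psi)) = V$, where $c_N(\Psi)$ is the $N$-th Fredholm coefficient of $\Psi$. The forward direction is clear: if $\NP_\pi^{<r}(\Psi)$ terminates at $(N,V)$, then $(N,V)$ is a vertex of $\NP_\pi(\Psi)$ (the slope jumps from $<r$ to $\geq r$), so the point lies on the lower convex hull and $v_\pi(c_N) = V$. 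For the reverse direction, the column Hodge bound $\NP_\pi(\Psi) \succeq \cHP_\pi(\Psi)$ combined with $v_\pi(c_N)=V=c_1+\cdots+c_N$ forces the hull to pass through $(N,V)$; then a pigeonhole-convexity argument shows the first $N$ Newton slopes are all $<r$ (any slope $\geq r$ among them would, by convexity of $\NP$ and $c_i < r$ for $i\leq N$, make $s_1+\cdots+s_N$ exceed $V$), and the column Hodge bound at $N+1$ gives $v_\pi(c_{N+1})\geq V+r$, forcing the $(N+1)$-th slope to be $\geq r$.

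The link between the two reformulated conditions is made via Serre's formula:
\begin{equation*}
c_N(\Psi) = (-1)^N \det \Psi^{<r} + (-1)^N \sum_{\substack{|J|=N \\ J \neq I^{<r}(\Psi)}} \det \Psi_J.
\end{equation*}
For each $J$ in the sum, $J$ omits some index in $I^{<r}(\Psi)$ (column valuation $<r$) and includes some index outside it (column valuation $\geq r$), so the column Hodge bound yields $v_\pi \det \Psi_J \geq \sum_{i \in J} c_i > V$. Combined with $v_\pi \det \Psi^{<r} \geq V$ (column Hodge bound on $\Psi^{<r}$), no cancellation between the two contributions is possible, and we obtain $v_\pi(c_N(\Psi)) = V \iff v_\pi \det \Psi^{<r} = V$. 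Chaining the three equivalences yields (1) $\iff$ (2).

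The step I expect to require the most care is the reverse direction in the reformulation of (1): specifically, the convexity argument that $v_\pi(c_N(\Psi)) = V$ forces all earlier Newton slopes to be strictly less than $r$. This is where the precise structure of $I^{<r}(\Psi)$ (the strict jump in column valuations across the threshold $r$) is used crucially; the same strict jump also drives the strict inequality $v_\pi \det \Psi_J > V$ for $J \neq I^{<r}(\Psi)$ in the Serre expansion.
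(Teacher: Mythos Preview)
Your proposal is correct and follows essentially the same route as the paper: both arguments reduce the equivalence to the statement $v_\pi\det(\Psi^{<r})=V$, and both establish this by observing that every $N\times N$ minor $\det\Psi_J$ with $J\neq I^{<r}(\Psi)$ has $\pi$-adic valuation strictly greater than $V$, so that in Serre's expansion $c_N(\Psi)=(-1)^N\sum_{|J|=N}\det\Psi_J$ the term $J=I^{<r}(\Psi)$ is the only one that can attain valuation $V$. The paper's write-up is terser and leaves the passage from this minor estimate to condition~(1) implicit; your version spells out the intermediate equivalence $(1)\Longleftrightarrow v_\pi(c_N(\Psi))=V$ and supplies the convexity argument showing that $v_\pi(c_N(\Psi))=V$ forces the first $N$ Newton slopes to be strictly below $r$, which is indeed the step that deserves the most care.
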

		\begin{proof}
			Choose an ordering on $I$ so that the column slopes $v_\pi \psi(e_i)$ are increasing. Label the elements $I^{<r} = \{i_1,...,i_n\}$ in increasing order, and let $(n,m)$ denote the terminal point of $\cHP_\pi^{< r}(\Psi)$ so that
			\begin{equation*}
				m	=	v_\pi \Psi(e_{i_1})+\cdots + v_\pi\Psi(e_{i_n}).
			\end{equation*}
			Let $J = \{j_1,...,j_n\}$ be any subset of $I$ of cardinality $n$, with elements labelled in increasing order. Then for every $1 \leq k \leq n$ we have
			\begin{equation*}
				v_\pi \Psi(e_{j_k}) \geq v_\pi \Psi(e_{i_k}).
			\end{equation*}
			If $J \neq I^{< r}(\Psi)$, then the inequality is \emph{strict} for $k = n$, since $e_{j_n} \notin I^{< r}(\Psi)$. Since $\NP_\pi(\Psi_J) \succeq \cHP_\pi(\Psi_J)$, we see that every $J \times J$ minor of $\Psi$ has $\pi$-adic valuation $\geq v_\pi \det(\Psi^{< r})$, with strict inequality when $J \neq I^{< r}(\Psi)$. Thus $\NP_\pi^{< r}(\Psi)$ and $\cHP_\pi^{< r}(\Psi)$ have the same terminal point if and only if $v_\pi \det(\Psi^{< r}) = m$, i.e. if and only if $\NP_\pi(\Psi^{< r})$ and $\cHP_\pi^{< r}(\Psi)$ have the same terminal point
		\end{proof}
		
		\begin{lemma}\label{l:HPequal}
			Suppose that $\NP_\pi^{< r}(\Psi)$ and $\cHP_\pi^{< r}(\Psi)$ have the same terminal point. Then the polygons
			\begin{equation*}
				\HP_\pi(\Psi^{< r}),\ \cHP_\pi(\Psi^{< r}),\ \HP_\pi^{< r}(\Psi),\ \cHP_\pi^{< r}(\Psi)
			\end{equation*}
			are all equal.		
		\end{lemma}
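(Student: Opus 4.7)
The plan is to propagate the terminal-point matching hypothesis along a chain of slope-by-slope majorizations, using Lemma \ref{l:NHinteraction} to reformulate the hypothesis on the finite submatrix $\Psi^{<r}$ and Lemma \ref{l:hodgeSlopes} to bridge from column Hodge to Hodge polygons. Throughout, let $n^{\ast} = |I^{<r}(\Psi)|$, which is finite because $\Psi$ is tight, and let $(n^{\ast}, m^{\ast})$ denote the terminal point of $\cHP_\pi^{<r}(\Psi)$; the hypothesis says that $\NP_\pi^{<r}(\Psi)$ has this same terminal point.

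First I will handle the polygons attached to the finite submatrix $\Psi^{<r}$. By Lemma \ref{l:NHinteraction}, the hypothesis is equivalent to $\NP_\pi(\Psi^{<r})$ terminating at $(n^{\ast}, m^{\ast})$. Passing to the submatrix can only increase column valuations, so $\cHP_\pi(\Psi^{<r}) \succeq \cHP_\pi^{<r}(\Psi)$ slope-by-slope when both slope multisets are sorted; combined with the standard majorizations $\NP_\pi(\Psi^{<r}) \succeq \HP_\pi(\Psi^{<r}) \succeq \cHP_\pi(\Psi^{<r})$, this yields a length-$n^{\ast}$ chain of polygons whose extremes both terminate at $(n^{\ast}, m^{\ast})$. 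Slope-by-slope $\geq$ with equal sums forces pointwise equality of the sorted slope sequences, so the chain collapses on the right to $\cHP_\pi(\Psi^{<r}) = \cHP_\pi^{<r}(\Psi)$. With $\HP_\pi(\Psi^{<r})$ and $\cHP_\pi(\Psi^{<r})$ now sharing a terminal point, Lemma \ref{l:hodgeSlopes} forces $\HP_\pi(\Psi^{<r}) = \cHP_\pi(\Psi^{<r})$.

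Next I will compare the truncated polygons of the infinite operator $\Psi$ itself. The slope-by-slope majorization $\NP_\pi(\Psi) \succeq \HP_\pi(\Psi) \succeq \cHP_\pi(\Psi)$ implies that the counts of slopes strictly below $r$ are weakly increasing: $n^{\ast}_{\NP} \leq n^{\ast}_{\HP} \leq n^{\ast}_{\cHP} = n^{\ast}$. The hypothesis forces $n^{\ast}_{\NP} = n^{\ast}$, so $n^{\ast}_{\HP} = n^{\ast}$ as well. Then $\NP_\pi^{<r}(\Psi) \succeq \HP_\pi^{<r}(\Psi) \succeq \cHP_\pi^{<r}(\Psi)$ is again a length-$n^{\ast}$ chain whose outer endpoints both equal $(n^{\ast}, m^{\ast})$, and the same slope-by-slope $\geq$ with equal sums collapses everything to equality, giving $\HP_\pi^{<r}(\Psi) = \cHP_\pi^{<r}(\Psi)$.

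The main obstacle is really careful bookkeeping rather than any substantive estimate: Lemma \ref{l:hodgeSlopes} is stated only for finite matrices, so I cannot apply it directly to the infinite $\Psi$ and must route through the finite submatrix $\Psi^{<r}$ as a bridge. The crucial technical point is to exploit the hypothesis in two distinct ways simultaneously, once through Lemma \ref{l:NHinteraction} to control $\NP_\pi(\Psi^{<r})$ and once directly to pin down the number of Newton slopes of $\Psi$ lying below $r$, and then to combine these with the elementary fact that among convex polygons of fixed length passing through the origin, a termwise majorization together with equal terminal heights forces equality of the underlying slope sequences.
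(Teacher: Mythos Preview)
Your first paragraph is correct and matches the paper's argument: you use Lemma~\ref{l:NHinteraction} to pin down the terminal point of $\NP_\pi(\Psi^{<r})$, collapse $\cHP_\pi(\Psi^{<r})$ onto $\cHP_\pi^{<r}(\Psi)$ via the termwise column-valuation comparison, and then invoke Lemma~\ref{l:hodgeSlopes} on the finite matrix $\Psi^{<r}$.

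The second paragraph, however, has a genuine gap. The relation $\NP_\pi(\Psi) \succeq \HP_\pi(\Psi)$ supplied by Lemma~\ref{l:newtonhodge} means only ``lies on or above''; it is \emph{not} a slope-by-slope majorization, and in general does not imply the count inequality $n^{\ast}_{\NP} \leq n^{\ast}_{\HP}$ (for instance the polygon with slopes $\{1,1\}$ lies above that with slopes $\{0,2\}$, yet has strictly more slopes below $r=1.5$). Even after you correctly deduce $n^{\ast}_{\HP} = n^{\ast}$ (which does follow, but from the sandwich $\NP(n^{\ast}) = \cHP(n^{\ast}) = m^{\ast}$ together with $\HP \succeq \cHP$, not from slope counting), your final ``collapse'' step still requires a \emph{termwise} inequality between the slope sequences of $\HP_\pi^{<r}(\Psi)$ and $\cHP_\pi^{<r}(\Psi)$ to pass from equal terminal points to equality of polygons. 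Lemma~\ref{l:hodgeSlopes} gives this only for finite matrices, and you explicitly acknowledge you cannot apply it directly to the infinite $\Psi$. Two convex polygons sharing both endpoints with one lying above the other need not coincide.

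The paper closes this gap by a different route: it sandwiches $\HP_\pi^{<r}(\Psi)$ between $\cHP_\pi^{<r}(\Psi)$ below and $\HP_\pi(\Psi^{<r})$ \emph{above} (the upper bound holds because $\wedge^n \Psi^{<r}$ is a submatrix of $\wedge^n \Psi$, whence $v_\pi(\wedge^n \Psi^{<r}) \geq v_\pi(\wedge^n \Psi)$). Since your first paragraph already establishes that top equals bottom, this squeeze forces $\HP_\pi^{<r}(\Psi)$ to equal both. In other words, you should route the treatment of $\HP_\pi^{<r}(\Psi)$ through the finite matrix $\Psi^{<r}$ as an upper bound, rather than attempting a direct slope-by-slope comparison on the infinite operator.
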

		\begin{proof}
			Observe that each of the listed polygons lies on or below $\HP_\pi(\Psi^{< r})$ and on or above $\cHP_\pi^{< r}(\Psi)$. By Lemma \ref{l:NHinteraction}, all of these polygons share the same terminal point. Since each column slope of $\Psi^{< r}$ is greater than or equal to the corresponsing column slope of $\Psi$, we must have that $\cHP_\pi(\Psi^{< r}) = \cHP_\pi(\Psi)$. It suffices then to consider the case that $\Psi = \Psi^{<r}$ is a finite matrix whose $\pi$-adic column slopes are $< r$. But this is immediate from Lemma \ref{l:hodgeSlopes}.
		\end{proof}

		\begin{definition}
			Let $\Psi' = \Psi + \varepsilon$, where $\varepsilon$ is a matrix with entries in $R$ indexed by $I$. We say that $\Psi'$ is a $\pi$-adic \emph{$r$-perturbation} of $\Psi$ if:
			\begin{enumerate}
				\item	$v_\pi \varepsilon(e_i) > v_\pi \Psi(e_i)$ for all $i \in I^{< r}(\Psi)$.
				\item	$v_\pi \varepsilon(e_i) \geq r$ for all $i \notin I^{< r}(\Psi)$.
			\end{enumerate}
		\end{definition}
		
		If $\Psi'$ is a tight $\pi$-adic $r$-perturbation of $\Psi$, then the truncated column Hodge polygons $\cHP^{< r}_\pi (\Psi)$ and $\cHP^{< r}_\pi (\Psi')$ of Lemma \ref{l:hodgeSlopes} necessarily agree. The stronger perturbation condition guarantees the following interaction between the Newton polygons of both matrices:
		
		\begin{lemma}[Perturbation Lemma]\label{l:perturb}
			Suppose that $\Psi' = \Psi+\varepsilon$ is a tight $\pi$-adic $r$-perturbation of $\Psi$. If $\NP_\pi^{< r} (\Psi)$ has the same terminal point as $\cHP_\pi^{< r} (\Psi) = \cHP_\pi^{<r} (\Psi)$, then the same is true of $\NP_\pi^{< r} (\Psi')$.
		\end{lemma}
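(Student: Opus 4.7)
The plan is to reduce the statement to a direct calculation of $v_\pi\det((\Psi')^{<r})$, using Lemma \ref{l:NHinteraction} as a bridge between the two formulations of Newton-Hodge touching.

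First I would verify that $I^{<r}(\Psi) = I^{<r}(\Psi')$ with identical column slopes on this index set. Indeed, condition (1) of the $r$-perturbation definition forces $v_\pi\Psi'(e_i) = v_\pi\Psi(e_i)$ for $i \in I^{<r}(\Psi)$, while condition (2) combined with $v_\pi\Psi(e_i) \geq r$ for $i \notin I^{<r}(\Psi)$ forces $v_\pi\Psi'(e_i) \geq r$. In particular, $\cHP_\pi^{<r}(\Psi') = \cHP_\pi^{<r}(\Psi)$. Writing $J = I^{<r}(\Psi) = I^{<r}(\Psi')$ and letting $m$ denote the terminal height of $\cHP_\pi^{<r}(\Psi)$, the hypothesis together with Lemma \ref{l:NHinteraction} gives $v_\pi\det(\Psi_J) = m$. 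What remains is to prove $v_\pi\det((\Psi')_J) = m$; a second application of Lemma \ref{l:NHinteraction} to $\Psi'$ will then yield the conclusion.

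The heart of the proof is multilinearity of the determinant in the columns. I would expand
\begin{equation*}
\det(\Psi_J + \varepsilon_J) = \sum_{T \subseteq J} \det(M_T),
\end{equation*}
where $M_T$ has column $j$ equal to the $j$th column of $\Psi_J$ if $j \notin T$ and the $j$th column of $\varepsilon_J$ if $j \in T$. Applying the column Hodge bound from Lemma \ref{l:hodgeSlopes} to each $M_T$, and using that restricting a column to a subset of coordinates only increases its $\pi$-adic valuation, I obtain
\begin{equation*}
v_\pi\det(M_T) \geq \sum_{j \notin T} v_\pi\Psi(e_j) + \sum_{j \in T} v_\pi\varepsilon(e_j).
\end{equation*}
The term $T = \emptyset$ equals $\det(\Psi_J)$, whose valuation is exactly $m$ by hypothesis. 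For $T \neq \emptyset$ the perturbation condition (1) gives $v_\pi\varepsilon(e_j) > v_\pi\Psi(e_j)$ for each $j \in T \subseteq J$, so the bound is strictly greater than $m$. Hence the $T = \emptyset$ term dominates strictly in the sum, and $v_\pi\det((\Psi')_J) = m$ as required.

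I do not anticipate a serious obstacle; the only subtle point is making sure the column Hodge bound of Lemma \ref{l:hodgeSlopes} genuinely applies to each mixed matrix $M_T$ (it does, since the estimate depends only on the valuations of the individual columns), and keeping track of the fact that the strict inequality $v_\pi\varepsilon(e_j) > v_\pi\Psi(e_j)$ is available precisely for indices $j \in J$, which is exactly where the $T \neq \emptyset$ expansion terms draw their $\varepsilon$-columns from.
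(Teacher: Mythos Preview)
Your proof is correct and follows essentially the same route as the paper: both reduce via Lemma~\ref{l:NHinteraction} to showing $v_\pi\det((\Psi')_J)=v_\pi\det(\Psi_J)=m$ for the finite submatrix on $J=I^{<r}(\Psi)$. The only cosmetic difference is that the paper bounds the error $\varepsilon_n=\wedge^n\Psi'-\wedge^n\Psi$ by an induction on exterior powers, whereas you expand $\det(\Psi_J+\varepsilon_J)$ directly by multilinearity over subsets $T\subseteq J$; these are two packagings of the same estimate.
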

		\begin{proof}
			By Lemmas \ref{l:NHinteraction} and \ref{l:HPequal}, we immediately reduce to the case that $\Psi$ and $\Psi'$ are finite matrices whose column slopes are all $< r$. We must show that $v_\pi \det(\Psi) = v_\pi \det(\Psi')$. Choose an ordering of $I$ so that the sequence $v_\pi \Psi(e_i)$ is increasing. Given an index $(i_1,...,i_k) \in \wedge^n I$, we will abbreviate $e_{i_1,...,i_k} = e_{i_1} \wedge \cdots \wedge e_{i_k}$. Let us write
			\begin{equation*}
				\wedge^k \Psi' = \wedge^k \Psi + \varepsilon_k
			\end{equation*}
			for all $k \geq 0$. A straightforward computation shows that
			\begin{equation*}
				\varepsilon_{k+1}(e_{i_1,...,i_{k+1}}) = \wedge^k \Psi(e_{i_1,...,i_k}) \wedge \varepsilon(e_{i_{k+1}}) + \varepsilon_k(e_{i_1,...,i_k})\wedge \Psi'(e_{i_{k+1}}).
			\end{equation*}
			Since $\Psi'$ is an $r$-perturbation of $\Psi$, by induction on $k$ we see that
			\begin{equation*}
				v_\pi(\varepsilon_{n})	>	v_\pi \Psi(e_1) + \cdots + v_\pi \Psi(e_n).
			\end{equation*}
			By assumption, the latter is exactly $\det(\Psi)$. It follows that
			\begin{equation*}
				v_\pi \det(\Psi') = v_\pi (\det(\Psi) + \varepsilon_n) = v_\pi \det(\Psi).
			\end{equation*}
		\end{proof}
		
	\subsection{Local-to-global extensions}\label{ss:localtoglobal}
	
		Let $P \in S$. Let $G_P$ denote the absolute Galois group of $F_P$, and let $\rho_P$ denote the restriction of $\rho$ to $G_P$. By a theorem of Katz-Gabber \cite{Katz2}, $\rho_P$ extends in a canonical way to a continuous character
		\begin{equation*}
			\rho_P^\ext:\pi_1(\A_{\F_q}^1)	\to	R^\times.
		\end{equation*}
		Let us describe $\rho_P^\ext$ explicitly in terms of $\sigma$-modules. We define a flat lifting $(A_P,\sigma)$ of $\A_{\F_q}^1$ over $\Z_p$, where $A_P = \Z_q[t_P^{-1}]$ and $\sigma(t_P^{-1}) = t_P^{-p}$. Let $A_{\pi,P} = R \otimes_{\Z_p} A_P$. Since $\tilde{E}$ is a $\bm\delta$-Frobenius structure for $(M,\phi)$, we may regard $\tilde{E}_P$ as an element of $A_{\pi,P}^{\delta_P}$. Then $\rho_P^\ext$ corresponds to the unit-root $\sigma$-module
		\begin{equation*}
			(M_P^\ext,\phi_P^\ext)	=	(A_{\pi,P}^\dagger,\tilde{E}_P \circ \sigma).
		\end{equation*}
		
		Since $\rho$ is $\bm\delta$-overconvergent, the Dwork trace formula (i.e. the Monsky trace formula over $\G_m$) guarantees that $L(\rho_P^\ext,s)$ is analytic in the disk $v_\pi(s) > - v_\pi(q)$. To state the trace formula, we define as before
		\begin{equation*}
			U_p	=	\frac{1}{p}	\sigma^{-1} \circ \Tr:A_P^\dagger \to A_P^\dagger.
		\end{equation*}
		By weak base change, $U_p$ induces a $p$-Dwork operator on $A_{\pi,P}^\dagger$. Consider the $p$-Dwork operator $\tilde{\Theta} = U_p \circ \tilde{E}_P$. Let $V_{\pi,P}^\dagger = K \otimes_R A_{\pi,P}^\dagger$. Then the action of $\tilde{\Theta}_q$ on $V_{\pi,P}^\dagger$ is nuclear, and the Dwork trace formula asserts that
		\begin{equation*}
			(1-\alpha s)L(\rho_P^\ext,s) = \frac{C(\tilde{\Theta}_q|V_{\pi,P}^\dagger,s)}{C(\tilde{\Theta}_q|V_{\pi,P}^\dagger,qs)},
		\end{equation*}
		where $\alpha \in 1+\pi R_q$ denotes constant term of $\tilde{E}_P$. Equivalently,
		\begin{equation*}
			C(\tilde{\Theta}_q|V_{\pi,P}^\dagger,s) = \prod_{j = 0}^\infty (1-\alpha 	q^js)L(\rho_P^\ext,q^js).
		\end{equation*}
		
		\begin{lemma}\label{l:A1factorization}
			There is a factorization
			\begin{equation*}
				C(\tilde{\Theta}_q|V_{\pi,P}^\dagger,s) = (1-\alpha s)\cdot C(\tilde{\Theta}_q|\calV_{\pi,P}^{\dagger,\tr},s).
			\end{equation*}
		\end{lemma}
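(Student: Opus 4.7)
The plan is to realize $\calV_{\pi,P}^{\dagger,\tr}$ as a $\tilde{\Theta}_q$-invariant subspace of $V_{\pi,P}^\dagger$ with one-dimensional $K_q$-quotient, and then to deduce the factorization from multiplicativity of characteristic series in short exact sequences (Example \ref{ex:nuclear}(\ref{i:quotient})).

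First, I would identify $\calV_{\pi,P}^{\dagger,\tr}$ with the ``pole part'' of $V_{\pi,P}^\dagger$. The ring $A_{\pi,P} = R_q[t_P^{-1}]$ splits as $R_q \cdot 1 \oplus A_{\pi,P}^+$, where $A_{\pi,P}^+$ is generated by the $t_P^{-k}$ for $k \geq 1$. Comparing weak-completion growth conditions on the negative-powers tail, one checks that the $\pi$-adic weak completion of $A_{\pi,P}^+$ coincides with the pole part of $\calA_{\pi,P}^\dagger$. Since we are in the local-to-global setup at a point of $S$ (so $\mu(P) = 0$), this is precisely $\calA_{\pi,P}^{\dagger,\tr}$, yielding a $K_q$-linear direct sum decomposition $V_{\pi,P}^\dagger = K_q \cdot 1 \oplus \calV_{\pi,P}^{\dagger,\tr}$.

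Next, I would verify $\tilde{\Theta}$-invariance of $\calV_{\pi,P}^{\dagger,\tr}$. Multiplication by $\tilde{E}_P \in A_{\pi,P}^\dagger$ preserves the pole part (since $A_{\pi,P}^+$ is an ideal), and the explicit formula $U_p(t_P^{-k}) = t_P^{-k/p}$ when $p \mid k$ (and $0$ otherwise) shows that $U_p$ also preserves the pole part. Hence $\tilde{\Theta} = U_p \circ \tilde{E}_P$ restricts to $\calV_{\pi,P}^{\dagger,\tr}$, producing a $\tilde{\Theta}_q$-equivariant short exact sequence
\begin{equation*}
0 \to \calV_{\pi,P}^{\dagger,\tr} \to V_{\pi,P}^\dagger \to K_q \to 0.
\end{equation*}
Nuclearity of $\tilde{\Theta}_q$ on $\calV_{\pi,P}^{\dagger,\tr}$ follows by expressing this space as a union of Banach subspaces on which $\tilde{\Theta}_q$ is completely continuous (via column estimates of the type carried out in Section \ref{s:estimates}) and applying Example \ref{ex:nuclear}(\ref{i:union}). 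Multiplicativity then gives
\begin{equation*}
C(\tilde{\Theta}_q | V_{\pi,P}^\dagger, s) = C(\tilde{\Theta}_q | \calV_{\pi,P}^{\dagger,\tr}, s) \cdot C(\tilde{\Theta}_q | K_q, s).
\end{equation*}

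Finally, I would compute the one-dimensional factor. Writing $\tilde{E}_P = \alpha + \tilde{E}_P^+$ with $\tilde{E}_P^+ \in \calV_{\pi,P}^{\dagger,\tr}$, we get $\tilde{\Theta}(1) \equiv U_p(\alpha) \pmod{\calV_{\pi,P}^{\dagger,\tr}}$. Since $\alpha \in R_q \subseteq \sigma(A_P^\dagger)$, the trace identity $\Tr(\alpha) = p\alpha$ gives $U_p(\alpha) = \sigma^{-1}(\alpha)$, so the induced $\sigma^{-1}$-semilinear action on the quotient sends $1 \mapsto \sigma^{-1}(\alpha)$. Iterating $v_p(q)$ times shows that $\tilde{\Theta}_q$ acts on $K_q$ as multiplication by $\prod_{i=0}^{v_p(q)-1} \sigma^i(\alpha)$, which is the scalar denoted $\alpha$ in the Dwork trace formula displayed just above the lemma. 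The main technical subtlety is the identification of $A_{\pi,P}^{\dagger,+}$ with $\calV_{\pi,P}^{\dagger,\tr}$; once the matching of growth conditions on the pole tails is in hand, the invariance check and the eigenvalue computation are routine.
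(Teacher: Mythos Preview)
Your approach is essentially the same as the paper's: both use the direct sum decomposition $A_{\pi,P}^\dagger = R_q \oplus \calA_{\pi,P}^{\dagger,\tr}$, verify that the truncated factor is $\tilde{\Theta}_q$-invariant with nuclear action, and then invoke the multiplicativity of characteristic series from Example~\ref{ex:nuclear}(\ref{i:quotient}). Your write-up simply spells out details (the invariance check, the nuclearity via Banach subspaces, and the eigenvalue computation on the quotient) that the paper's two-line proof leaves implicit.
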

		\begin{proof}
			We have a decomposition $A_{\pi,P}^\dagger	=	R_q \oplus \calA_{\pi,P}^{\dagger,\tr}$. We have already seen that the rightmost factor is $\tilde{\Theta}_q$-invariant and that $\tilde{\Theta}_q$ acts as a nuclear operator on this space. The desired factorization is obtained as in Example \ref{ex:nuclear}(\ref{i:quotient}).
		\end{proof}
		
		\begin{definition}
			Let $P \in S$. The $\pi_q$-adic \emph{local Newton polygon} of $\rho_P$ is
			\begin{equation*}
				\NP_{\pi_q} (\rho_P) = \NP_{\pi_q} (\tilde{\Theta}_q|\calV_{\pi,P}^{\dagger,\tr}).
			\end{equation*}
		\end{definition}
		
		By the Dwork trace formula, we see that $\NP_{\pi_q}^{< e}(\rho_P) = \NP_{\pi_q}^{< e}(\rho_P^\ext)$, and in particular this polygon does not depend on our local lifting $(\calA_P,\sigma)$. We now turn our attention to the Hodge polygons of local-to-global extensions. First, for each $P \in S$ we define the \emph{$\delta_P$-Hodge polygon} $\HP(\delta_P)$ to be the convex polygon with slope set
		\begin{equation*}
			\left\{	\frac{p-1}{\delta_P},\frac{2(p-1)}{\delta_P},...	\right\}.
		\end{equation*}
		Recall that since $P \in S$, the growth condition $m_{e,P} = \delta_P/p$ does not depend on $e$. By Proposition \ref{p:case1}, we know that
		\begin{equation}\label{eq:deltaHodge}
			\HP_\pi(\tilde{\Theta}|\calV_{\pi,P}^{\mathbf{m}_e,\tr}) \succeq \HP(\delta_P)^{\times v_p(q)}.
		\end{equation}
		
		\begin{definition}
			We say that $\rho_P$ is $\pi$-adically \emph{$\delta_P$-Hodge} if (\ref{eq:deltaHodge}) is an equality. In this case, we define the $\pi_q$-adic \emph{local Hodge polygon} of $(M,\phi)$ at $P$ to be
			\begin{equation*}
				\HP_{\pi_q}(\rho_P) = \HP(\delta_P). 
			\end{equation*}
			We say that $\rho$ is $\pi$-adically \emph{$\bm\delta$-Hodge} if $\rho_P$ is $\pi$-adically $\delta_P$-Hodge for all $P \in S$.
		\end{definition}
		
		\begin{remark}
			In general, the local Hodge polygon $\HP_{\pi_q}(\rho_P)$ \emph{need not} agree with the Hodge polygon $\HP_{\pi_q}(\tilde{\Theta}_q|\calV_{\pi,P}^{\mathbf{m}_e,\tr})$. However, by Lemma \ref{l:root} we always have
			\begin{equation*}
				\NP_{\pi_q} (\rho_P) \succeq \HP_{\pi_q}(\rho_P).
			\end{equation*}
		\end{remark}
		
		\begin{proposition}\label{p:deltaHodgeTouching}
			Let $r_i$ be a sequence of positive rational numbers with $r_i \to \infty$. Suppose that for each $i$, the polygons $\NP_{\pi_q}^{< r_i}(\rho_P)$ and $\HP^{< r_i}(\delta_P)$ have the same terminal point. Then $\rho_P$ is $\pi$-adically $\delta_P$-Hodge.
		\end{proposition}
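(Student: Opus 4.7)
My plan is to upgrade the column-slope bound of Proposition \ref{p:case1} to a pointwise slope-by-slope equality, by squeezing a chain of Hodge-Newton inequalities using the terminal-point hypothesis. Write $\HP^\bullet := \HP(\delta_P)^{\times v_p(q)}$ and fix an associated $K$-basis $\calB^{\mathbf{m}_e}$ to the $K_q$-basis $\calB_q^{\mathbf{m}_e}$ of $\calV_{\pi,P}^{\mathbf{m}_e,\tr}$, as in \S\ref{ss:iteration}. Let $s_k$, $h_k^{\mathrm{HP}}$, $c_k$, and $h_k$ denote the sorted (increasing) slopes of $\NP_\pi(\tilde{\Theta}|\calB^{\mathbf{m}_e})$, $\HP_\pi(\tilde{\Theta}|\calV_{\pi,P}^{\mathbf{m}_e,\tr})$, $\cHP_\pi(\tilde{\Theta}|\calB^{\mathbf{m}_e})$, and $\HP^\bullet$, respectively.

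The first step is to translate the hypothesis into the $\pi$-adic setting on $\calB^{\mathbf{m}_e}$. Using a local analog of Corollary \ref{t:good basis computes on Vdagger}, proved by the same union argument of Example \ref{ex:nuclear}(\ref{i:union}) together with the tightness of $\tilde{\Theta}|\calB_q^{\mathbf{m}}$ for $\mathbf{m} \geq \mathbf{m}_\pi$, we have $C(\tilde{\Theta}_q|\calV_{\pi,P}^{\dagger,\tr},s) = \det(I - s\tilde{\Theta}_q|\calB_q^{\mathbf{m}_e})$. Combining with Lemma \ref{l:root} gives $\NP_\pi(\tilde{\Theta}|\calB^{\mathbf{m}_e}) = \NP_{\pi_q}(\rho_P)^{\times v_p(q)}$, and tautologically $\HP^\bullet = \HP_{\pi_q}(\rho_P)^{\times v_p(q)}$. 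The hypothesis therefore reads: for every $i$, the truncations $\NP_\pi^{<r_i}(\tilde{\Theta}|\calB^{\mathbf{m}_e})$ and $\HP^{\bullet,<r_i}$ share a common terminal point $(N^{(i)}, M^{(i)})$.

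The second step is to strengthen Proposition \ref{p:case1} to a pointwise sorted-slope bound. Each basis element $F^j(\xi) e_{P,k}^{\mathbf{m}_e}$ (for $0 \leq j < v_p(q)$, $k \geq 1$) has $\tilde{\Theta}$-column slope at least $k(p-1)/\delta_P$, since $F^j(\xi)$ is a unit in $R_q$. The multiset of these lower bounds, over all $(j,k)$, is exactly the slope multiset of $\HP^\bullet$, so a standard rearrangement argument shows $c_k \geq h_k$ pointwise after sorting. Now the chain $\NP_\pi \succeq \HP_\pi \succeq \cHP_\pi \succeq \HP^\bullet$ yields
\begin{equation*}
\sum_{k \leq N^{(i)}} s_k \ \geq\ \sum_{k \leq N^{(i)}} h_k^{\mathrm{HP}} \ \geq\ \sum_{k \leq N^{(i)}} c_k \ \geq\ \sum_{k \leq N^{(i)}} h_k,
\end{equation*}
with both extremes equal to $M^{(i)}$ by the translated hypothesis; hence equality holds throughout. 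Combined with the pointwise inequalities $c_k \geq h_k$ (just established) and $h_k^{\mathrm{HP}} \geq c_k$ (Lemma \ref{l:hodgeSlopes}), these forced equalities give $h_k^{\mathrm{HP}} = c_k = h_k$ for all $k \leq N^{(i)}$. Since $r_i \to \infty$ drives $N^{(i)} \to \infty$, we conclude that these equalities hold for every $k \geq 1$, so $\HP_\pi(\tilde{\Theta}|\calV_{\pi,P}^{\mathbf{m}_e,\tr}) = \HP^\bullet$, which is the $\delta_P$-Hodge condition.

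The main subtlety is the leap from the terminal-point hypothesis, a partial-sum condition at selected indices $N^{(i)}$, to pointwise equality of sorted slopes. This leap relies crucially on Proposition \ref{p:case1} supplying a \emph{pointwise} column-slope bound; the weaker polygon-level inequality $\cHP_\pi \succeq \HP^\bullet$ alone would only yield matching of truncated polygons at the selected terminal points, not polygon-wide equality.
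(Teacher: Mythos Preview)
Your proof is correct and follows essentially the same approach as the paper's own proof: both set up the chain $\NP_\pi \succeq \HP_\pi \succeq \cHP_\pi \succeq \HP(\delta_P)^{\times v_p(q)}$, use the terminal-point hypothesis to force equality of partial sums, and then invoke the pointwise column-slope bound from Proposition~\ref{p:case1} to upgrade this to slope-by-slope equality. The only difference is packaging: the paper compresses your partial-sum and pointwise-squeeze argument into a single citation of Lemma~\ref{l:HPequal}, whereas you unpack that lemma's content explicitly (via Lemma~\ref{l:hodgeSlopes}). One small remark: Lemma~\ref{l:hodgeSlopes} is stated for finite matrices, so your direct appeal to it for the infinite matrix $\tilde{\Theta}|\calB^{\mathbf{m}_e}$ technically needs the reduction to $\Psi^{<r}$ carried out in the proof of Lemma~\ref{l:HPequal}; this is routine and the paper handles it the same way.
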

		\begin{proof}
			By Lemma \ref{l:root} we see that
			\begin{equation*}
				\NP_\pi(\tilde{\Theta}|\calV_{\pi,P}^{\dagger,\tr}) \succeq \HP_\pi(\tilde{\Theta}|\calV_{\pi,P}^{\mathbf{m}_e,\tr}) \succeq \cHP_\pi(\tilde{\Theta}|B_P^{\mathbf{m}_e}) \succeq \HP(\delta_P)^{\times v_p(q)}.
			\end{equation*}
			By the estimates in \S\ref{ss:case1}, we know that every column slope of $\tilde{\Theta}|B_P^{\mathbf{m}_e}$ is greater than or equal to the corresponding column slope of $\HP(\delta)^{\times v_p(q)}$. By assumption, the $< r_i$-truncation of all four polygons have the same terminal point for all $i$. Since $r_i \to \infty$, it follows from Lemma \ref{l:HPequal} that
			\begin{equation*}
				\HP_\pi (\tilde{\Theta}|V_{\pi,P}^{\mathbf{m}_e,\tr}) = \cHP_{\pi_q} (\tilde{\Theta}|B_P^{\mathbf{m}_e}) = \HP(\delta_P)^{\times v_p(q)}.
			\end{equation*}
		\end{proof}
		
		\begin{theorem}\label{t:finite are delta-good}
			Let $\rho:\pi_1(X) \to R^\times$ be a finite character of order $p^n$. Let $\pi \in \Z_p[\rho]$ be a uniformizer. For each $P \in S$, let $d_P$ denote the Swan conductor of $\rho$ at $P$, and let $\delta_P = d_P/p^{n-1}$. Then:
			\begin{enumerate}
				\item	\label{i:periodicity}For all $P \in S$ and all $n > 0$, the polygons $\NP_{\pi_q} (\rho_P)$ and $\HP(\delta_P)$ agree on the interval $[nd_P-1,nd_P]$.
				\item	\label{i:dhodge}$\rho$ is $\pi$-adically $\bm\delta$-Hodge.
			\end{enumerate}
		\end{theorem}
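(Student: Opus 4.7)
The starting point is the Dwork trace formula for the local-to-global extension $\rho_P^\ext$ over $\G_m$, combined with the factorization in Lemma \ref{l:A1factorization}. Together they yield the identity
\begin{equation*}
    C(\tilde{\Theta}_q|\calV_{\pi,P}^{\dagger,\tr},s)
    \;=\; \prod_{j\geq 1}(1-\alpha q^j s)\cdot\prod_{j\geq 0} L(\rho_P^\ext, q^j s),
\end{equation*}
where $\alpha\in 1+\pi R_q$. Using the Liu-Wei theorem cited in \S\ref{ss:mainresults}, the $q$-adic slopes $s_1,\dots,s_{d_P-1}$ of $L(\rho_P^\ext,s)$ lie strictly between $0$ and $1$, and their sum is $(d_P-1)/2$. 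Writing $V = v_{\pi_q}(q) = p^{n-1}(p-1)$ and $\tilde s_i = V s_i$, the $\pi_q$-adic slope multiset of $C(\tilde{\Theta}_q|\calV_{\pi,P}^{\dagger,\tr},s)$ is therefore
\begin{equation*}
    \bigsqcup_{j\geq 0}\{\tilde s_1+jV,\dots,\tilde s_{d_P-1}+jV\}\;\sqcup\;\bigsqcup_{j\geq 1}\{jV\}.
\end{equation*}

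For part (\ref{i:periodicity}), I would compare cumulative slope sums of $\NP_{\pi_q}(\rho_P)$ and $\HP(\delta_P)$ at the integer $md_P$ (replacing the clashing ``$n$'' in the statement by $m$). Because every $\tilde s_i$ lies in the open interval $(jV,(j+1)V)$, the slopes in that interval are exactly the $d_P-1$ translates $\tilde s_i+jV$ followed by the single slope $(j+1)V$, giving $md_P$ slopes up to height $mV$. Summing,
\begin{equation*}
    m\sum_i \tilde s_i + V\cdot\tfrac{(d_P-1)m(m-1)}{2} + V\cdot\tfrac{m(m+1)}{2}
    \;=\; \tfrac{V\cdot m(md_P+1)}{2},
\end{equation*}
which coincides with $\sum_{k=1}^{md_P} k(p-1)/\delta_P = \tfrac{V\cdot m(md_P+1)}{2}$, using $d_P=\delta_P p^{n-1}$. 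The same calculation with $md_P$ replaced by $md_P-1$ gives agreement at $x=md_P-1$ as well. Hence $\NP_{\pi_q}(\rho_P)$ and $\HP(\delta_P)$ coincide at both endpoints of $[md_P-1,md_P]$, and so agree on the whole interval.

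For part (\ref{i:dhodge}), I would apply Proposition \ref{p:deltaHodgeTouching} with the sequence $r_m = mV = mp^{n-1}(p-1)$. Since exactly $md_P-1$ slopes of each polygon are strictly less than $r_m$, part (\ref{i:periodicity}) gives that $\NP_{\pi_q}^{<r_m}(\rho_P)$ and $\HP^{<r_m}(\delta_P)$ share their terminal point. As $r_m\to\infty$, the proposition yields that $\rho_P$ is $\pi$-adically $\delta_P$-Hodge. Running this argument for each $P\in S$ proves the claim.

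The only real obstacle is bookkeeping the two normalizations ($q$-adic vs $\pi_q$-adic), in particular verifying $v_{\pi_q}(q)=p^{n-1}(p-1)$ from $\pi$ being a uniformizer of $\Z_p[\zeta_{p^n}]$, and invoking the unit slope $s_i\in(0,1)$ fact (from Liu-Wei) to guarantee that the ordering of slopes used in the cumulative sum is correct. Once these are in place, the proof is a direct combinatorial identity.
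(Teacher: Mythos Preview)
Your proposal is correct and follows essentially the same route as the paper. The paper's proof is terse: it notes that $L(\rho_P^\ext,s)$ is a polynomial of degree $d_P-1$ whose $\pi_q$-adic Newton polygon shares the terminal point $(d_P-1,\,V(d_P-1)/2)$ with $\HP^{<V}(\delta_P)$, then asserts that part (\ref{i:periodicity}) ``follows immediately from the Dwork trace formula'' and deduces (\ref{i:dhodge}) from Proposition \ref{p:deltaHodgeTouching}. You have simply unpacked that ``immediately'' into an explicit computation of cumulative slope sums, which is a perfectly good (and arguably clearer) way to see the periodicity. The one point worth flagging is that the strict containment $s_i\in(0,1)$ you use to order the slopes is not quite a consequence of Liu--Wei alone; the lower bound $s_i>0$ comes from Newton-over-Hodge, but the upper bound $s_i<1$ needs either the functional equation (symmetry of the Newton polygon) or the general slope bound for rank-one unit-root $F$-isocrystals. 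Once that is in hand your bookkeeping is correct, and your application of Proposition \ref{p:deltaHodgeTouching} with $r_m=mV$ is exactly what the paper does.
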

		\begin{proof}
			Since $L(\rho_P^\ext,s)$ is a polynomial of degree $d_P-1$, we see that $\NP_{\pi_q}^{< e} (\rho_P^\ext)$ and $\HP^{< e}(\delta)$ share the same terminal point $(d_P-1,(d_P-1)/\delta_P)$. Claim \ref{i:periodicity} follows immediately from the Dwork trace formula, and claim \ref{i:dhodge} follows from Proposition \ref{p:deltaHodgeTouching}.
		\end{proof}
		
		In the follow-up papers \cite{Kramer-Miller-Upton2}, \cite{Upton2} we will verify that each of the $\bm\delta$-overconvergent characters of Example \ref{ex:deltaOverconvergent} are also $\bm\delta$-Hodge.
		
	\subsection{Perturbations Coming from Local-to-Global Extensions}
	\label{ss:global touching}
	
		We will now define a perturbation of the matrix $\Psi = \tilde{\Theta}|{B}^{\mathbf{m}_e}$ using local-to-global extensions. Using the partition (\ref{eq:globalbasis}), we may regard $\Psi = (\Psi_{\alpha,\beta})$ as a block matrix, where $\alpha,\beta$ are either $0$ or one of the $P$. Let us define a new block matrix $\Psi' = (\Psi'_{\alpha,\beta})$ as follows: For any $\alpha$, we let $\Psi'_{\alpha,0} = \Psi_{\alpha,0}$. If $P \in S$, then we let $\Psi'_{P,P} = \tilde{\Theta}|B_P^{\mathbf{m}_e}$. We define all other blocks to be zero.
		
		\newcommand{\Pf}{\mathbf{P}}
		\newcommand{\UP}[2]{\makebox[0pt]{\smash{\raisebox{1.5em}{$\phantom{#2}#1$}}}#2}
		\newcommand{\LF}[1]{\makebox[0pt]{$#1$\hspace{5em}}}
		
		\begin{figure}
			\begin{equation*}
				\renewcommand{\arraystretch}{1.3}
				\left[\begin{array}{c@{}ccc|ccc|ccc}
					\LF{0}&&	\UP{0}{\Psi_{0,0}}	&	&	0	&	\UP{P \in S}{\cdots}	&	0	&	0	&	\UP{P \notin S}{\cdots}	&	0	\\\hline
					&&	\Psi_{0,P}	&	&	\tilde{\Theta}|B_P^{\mathbf{m}_e}	&	\cdots	&	0	&	0	&	\cdots	&	0	\\
					\LF{P \in S}&&	\vdots	&	&	\vdots	&	\ddots	&	\vdots	&	\vdots	&	\ddots	&	\vdots	\\
					&&	\Psi_{0,P}	&	&	0	&	\cdots	&	\tilde{\Theta}|B_P^{\mathbf{m}_e}	&	0	&	\cdots	&	0	\\\hline
					&&	\Psi_{0,P}	&	&	0	&	\cdots	&	0	&	0	&	\cdots	&	0	\\
					\LF{P \notin S}&&	\vdots	&	&	\vdots	&	\ddots	&	\vdots	&	\vdots	&	\ddots	&	\vdots	\\
					&&	\Psi_{0,P}	&	&	0	&	\cdots	&	0	&	0	&	\cdots	&	0	\\
					
				\end{array}\right]		
			\end{equation*}
			\caption{The shape of the block matrix $\Psi'$, with block indices labelled.}
		\end{figure}
		
		\begin{lemma}\label{l:globalperturbation}
			If $\rho$ is $\pi$-adically $\bm\delta$-Hodge, then $\Psi'$ is an $e$-perturbation of $\Psi$.
		\end{lemma}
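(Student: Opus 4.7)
The plan is to analyze the difference $\varepsilon = \Psi' - \Psi$ column by column with respect to the partition $B^{\mathbf{m}_e} = B_{q,0}^{\mathbf{m}_e} \sqcup \bigsqcup_P B_{q,P}^{\mathbf{m}_e}$, and to verify the two conditions from the definition of a $\pi$-adic $e$-perturbation directly. The bulk of the input is Proposition \ref{p:globalcolumn} for column lower bounds, together with the $\bm\delta$-Hodge hypothesis which, via Lemma \ref{l:HPequal}, sharpens these bounds at points $P \in S$.

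For columns $\beta = 0$ indexed by $B_{q,0}^{\mathbf{m}_e}$ the definition of $\Psi'$ gives $\Psi'_{\alpha,0} = \Psi_{\alpha,0}$, so $\varepsilon$ vanishes identically and both conditions hold trivially. For columns $\beta = (P,k)$ with $P \notin S$ we have $\Psi' = 0$ so $\varepsilon = -\Psi$; since $m_{e,P}$ is chosen so that Proposition \ref{p:globalcolumn} forces $v_\pi \Psi(e_{P,k}^{\mathbf{m}_e}) \geq e$ for every admissible $k$ (namely $k(p-1)e \geq e$ when $\eta(P) \in \{0,\infty\}$, and $\ell e \geq e$ when $\eta(P) = 1$), such columns lie outside $I^{<e}(\Psi)$ and condition (2) follows immediately.

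The main case is $\beta = (P,k)$ with $P \in S$. The central structural observation is that the semi-local projection $\pr_P : \tilde{A}_\pi^\dagger \to \calA_{\pi,P}^{\dagger,\tr}$ intertwines the global operator $\tilde{\Theta} = U_p \circ \tilde{E}$ with its semi-local counterpart at $P$; this follows from the compatibility of the global and semi-local $U_p$ established via the diagram (\ref{eq:semilocal}), combined with the semi-local nature of $\tilde{E}$. Since by construction $\pr_{P'}(e_{P,k}) = 0$ for $P' \neq P$ while $\pr_P(e_{P,k}^{\mathbf{m}_e}) = \pi^{k/m_P} t_P^{-k}$, this intertwining yields $\pr_{P'}(\tilde{\Theta}(e_{P,k}^{\mathbf{m}_e})) = 0$ for $P' \neq P$, while $\pr_P(\tilde{\Theta}(e_{P,k}^{\mathbf{m}_e}))$ expands in the local formal basis $\{\pi^{k'/m_P} t_P^{-k'}\}$ as precisely the $(P,k)$-column of the local matrix $\tilde{\Theta}|B_P^{\mathbf{m}_e}$. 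Hence $\Psi$ and $\Psi'$ agree on the $(P,P)$-block and on all $(P',P)$-blocks with $P' \neq 0, P$, leaving nonzero entries of $\varepsilon$ in this column only in the $\tilde{L}_\pi$-row.

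To complete the verification I would combine Proposition \ref{p:globalcolumn} (which gives $v_\pi \Psi(e_{P,k}^{\mathbf{m}_e}) \geq k(p-1)/\delta_P$ for the whole column) with the $\bm\delta$-Hodge hypothesis, which by Lemma \ref{l:HPequal} forces this lower bound to be attained exactly by the $(P,P)$-block. This identifies $I^{<e}(\Psi)$ with the set of pairs $(P,k)$ satisfying $P \in S$ and $k(p-1)/\delta_P < e$, and gives condition (2) immediately for columns outside $I^{<e}(\Psi)$. The remaining task, and the main technical obstacle, is to establish the strict inequality $v_\pi \varepsilon(e_{P,k}^{\mathbf{m}_e}) > k(p-1)/\delta_P$ for the $\tilde{L}_\pi$-row entries when $(P,k) \in I^{<e}(\Psi)$. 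I expect this to follow from a refinement of Proposition \ref{p:globalcolumn} that separates the $\tilde{L}_\pi$-contribution to $\tilde{\Theta}(e_{P,k}^{\mathbf{m}_e})$ from the truncated-pole contribution at $P$: the former should inherit an extra factor of $\pi$ coming from the factor $p^{-1}$ in $U_p = p^{-1}\sigma^{-1} \Tr$ (whose trace lands in $p\sigma(A^\dagger)$), combined with the congruence $\tilde{E}_P \equiv 1 \pmod{\pi}$. This bookkeeping, although only giving up a single factor of $\pi$, is the only substantive new estimate required beyond the material already developed in Section \ref{s:estimates}.
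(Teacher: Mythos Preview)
Your proposal has a genuine gap in the main case $P \in S$. The intertwining claim is false: while the local expansion map $\tilde{A}_\pi^\dagger \to \calA_{\pi,P'}^\dagger$ does commute with $\tilde{\Theta}$, the \emph{truncated} projection $\pr_{P'}$ does not. The obstruction is that $\tilde{\Theta}$ fails to preserve the non-truncated part of $\calA_{\pi,P'}^\dagger$ when $P' \in S$: if $y \in R_q\llbracket t_{P'}\rrbracket$ then $\tilde{E}_{P'}y$ acquires poles from $\tilde{E}_{P'} \in R_q[t_{P'}^{-1}]$, and $U_p$ need not kill them. Concretely, the $P'$-component of $e_{P,k}$ (for $P' \neq P$) is the $P'$-component of the correction term $c_{P,k}$, which is nonzero in general (only its \emph{truncated} part vanishes). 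Applying $\tilde{\Theta}$ to this can and does produce pole terms, so $\pr_{P'}(\tilde{\Theta}(e_{P,k}^{\mathbf{m}_e})) \neq 0$. The same remark applies at $P' = P$: the $P$-component of $c_{P,k}$ is a power series in $t_P$, and $\tilde{\Theta}$ applied to it contributes to $\pr_P(\tilde{\Theta}(e_{P,k}^{\mathbf{m}_e}))$. Hence $\Psi$ and $\Psi'$ do \emph{not} agree on the $(P,P)$-block, nor on the $(P',P)$-blocks, and $\varepsilon$ is not concentrated in the $\tilde{L}_\pi$-rows.

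The paper's argument avoids this pitfall entirely and is much more direct. Writing $e_{P,k}^{\mathbf{m}_e} = \pi^{k/m_P}(t_P^{-k} + c_{P,k})$ with $m_P = \delta_P/p$, one observes that the entire error column satisfies
\[
\varepsilon(e_{P,k}^{\mathbf{m}_e}) = -\pi^{kp/\delta_P}\tilde{\Theta}(c_{P,k}),
\]
up to an identification via the formal basis. The explicit prefactor $\pi^{kp/\delta_P}$ immediately gives $v_\pi(\varepsilon(e_{P,k}^{\mathbf{m}_e})) \geq kp/\delta_P$, while the $\bm\delta$-Hodge hypothesis gives $v_\pi(\Psi(e_{P,k}^{\mathbf{m}_e})) = k(p-1)/\delta_P$; the strict inequality $kp/\delta_P > k(p-1)/\delta_P$ is then trivial. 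No refinement of Proposition~\ref{p:globalcolumn}, no bookkeeping of the trace map into $p\sigma(A^\dagger)$, and no separate treatment of the $\tilde{L}_\pi$-rows is required.
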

		\begin{proof}
			First, recall from the global Hodge bound that the $\pi$-adic valuation of $\tilde{\Theta}(e_{\alpha,k}^{\mathbf{m}_e})$ is $\geq e$ if either: $\alpha = P \notin S$, or $\alpha = P \in S$ and $k \geq e \delta_P$. Thus we need only consider the cases $\alpha = 0$ and $\alpha = P \in S$ with $k < e \delta_P$. Let $\Psi' = \Psi+\varepsilon$, and regard $\varepsilon = (\varepsilon_{\alpha,\beta})$ as a block matrix. Then by construction we see that $\varepsilon_{\alpha,0} = 0$ regardless of $\alpha$, so for these columns $\Psi'$ satisfies the perturbation condition.
			
			It remains to consider the case $\alpha = P \in S$ and $k < e \delta_P$. Write ${e}_{P,k} = t_P^{-k}+c_{P,k}$, so that
			\begin{equation*}
				\tilde{\Theta}({e}_{P,k}^{\mathbf{m}_e}) = \tilde{\Theta}(\pi^{kp/\delta_P}t^{-k}) + \pi^{kp/\delta_P}\tilde{\Theta}(c_k).
			\end{equation*}
			Regarding $\varepsilon$ as an endomorphism of $\tilde{A}_\pi^{\mathbf{m}_e}$, we see that
			\begin{equation*}
				\varepsilon({e}_{P,k}^{\mathbf{m}_e}) = -\pi^{kp/\delta_P}\tilde{\Theta}(c_k).
			\end{equation*}
			It follows that
			\begin{equation*}
				v_\pi(\varepsilon({e}_{P,k}^{\mathbf{m}_e})) \geq \frac{kp}{\delta_P}.
			\end{equation*}
			On the other hand, the assumption that $\rho$ is $\pi$-adically $\bm\delta$-Hodge implies that
			\begin{equation*}
				v_\pi(\tilde{\Theta}({e}_{P,k}^{\mathbf{m}_e})) = \frac{k(p-1)}{\delta_P}.
			\end{equation*}
		\end{proof}
		
	\subsection{Main results}
	
		We are now ready to state and prove the general form of Theorem \ref{intro theorem: local touching of polygons givesglobal touching}. Let $r \leq v_\pi(p)$, so that $L(\phi,s)$ is analytic in the region $v_{\pi_q}(s) > -r$ by the Monsky trace formula. We define the $r$-truncated \emph{$\pi_q$-adic Newton polygon} of $(M,\phi)$ to be
		\begin{equation*}
			\NP_{\pi_q}^{<r} (\phi)=	\NP_{\pi_q}^{<r} (\Theta_q|V_\pi^\dagger).
		\end{equation*}
		
		\begin{lemma}\label{l:slope0}
			If $\overline{X}$ is ordinary, then $\NP_{\pi_q}^{< r} (\phi)$ has $N$ segments of slope $0$.
		\end{lemma}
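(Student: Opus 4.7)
The plan is to prove matching upper and lower bounds for the number of slope-zero segments of $\NP_{\pi_q}^{<r}(\phi)$.

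For the upper bound, I would invoke Corollary \ref{t:good basis computes on Vdagger} to identify $C(\Theta_q|V_\pi^\dagger,s) = \det(I-s\tilde{\Theta}_q|B_q^{\mathbf{m}_e})$, and then use Lemma \ref{l:root} to pass between the $\pi_q$-adic Newton polygon of $\tilde{\Theta}_q$ and the $\pi$-adic Newton polygon of $\tilde{\Theta}|B^{\mathbf{m}_e}$. The global column estimate (Proposition \ref{p:globalcolumn}) guarantees that every column of $\tilde{\Theta}|B^{\mathbf{m}_e}$ indexed by some $B_P^{\mathbf{m}_e}$ with $P \in S_\eta$ has strictly positive $\pi$-adic valuation, whereas the $Nv_p(q)$ columns indexed by $B_0^{\mathbf{m}_e}$ have $\pi$-adic valuation at least $0$ (and may be exactly $0$). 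The column Hodge polygon therefore carries precisely $Nv_p(q)$ slope-zero segments, and the Newton-over-Hodge inequality (Lemma \ref{l:newtonhodge}) combined with Lemma \ref{l:root} forces $\NP_{\pi_q}^{<r}(\phi)$ to have at most $N$ slope-zero segments.

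For the matching lower bound, ordinariness enters. My approach would be to reduce $\tilde{\Theta}$ modulo $\pi$: since $\tilde E \equiv 1 \pmod \pi$ by the definition of a $\bm\delta$-Frobenius structure, and since the twist $a \equiv 1 \pmod \pi$ by Proposition \ref{p:oc}, the reduction $\tilde{\Theta} \bmod \pi$ coincides with the untwisted $p$-Dwork operator $U_p$ acting on the mod-$\pi$ reduction of $\tilde{A}_\pi^\dagger$, which is the coordinate ring $\overline{A}$ of $U$ over $\F_q$. Using Lemma \ref{l:red}, the number of unit-eigenvalue slopes of $\tilde{\Theta}_q$ equals the stable $\F_q$-rank of the iterates of this reduction. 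I would then identify this stable rank with the rank of the semi-simple part of the Cartier operator acting on an appropriate space of differentials on $\overline X$ with poles on $S_\eta$. Ordinariness of $\overline X$ is equivalent to the bijectivity of the Cartier operator on $H^0(\overline X, \Omega_{\overline X})$, which yields $g$ stable dimensions; a Riemann--Roch computation accounting for the $r_0 + r_1 + r_\infty - 1$ additional sections coming from allowed poles on $S_\eta$ bumps this up to exactly $N = g - 1 + r_0 + r_1 + r_\infty$.

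The main obstacle lies in the second step: one must identify the stable image of the reduction $U_p$ on the affine coordinate ring $\overline{A}$ with a classical Cartier-operator invariant on the \emph{compactification} $\overline X$. Concretely, the delicate point is verifying that allowing prescribed pole orders along $S_\eta$ contributes precisely $r_0 + r_1 + r_\infty - 1$ additional slope-zero dimensions on top of the $g$ coming from ordinariness of $\overline X$, matching the Riemann-Hurwitz count \eqref{eq:rh} that underlies $N$. Once this identification is in place, combining it with the upper bound from the column Hodge polygon pins the slope-zero count to exactly $N$.
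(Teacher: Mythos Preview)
Your proposal is correct in outline but takes a substantially longer route than the paper. The paper's proof is two lines: since the Frobenius structure satisfies $E \equiv 1 \pmod{\pi}$, the $\sigma$-module $(M,\phi)$ is congruent modulo $\pi$ to the trivial $\sigma$-module, so $L(\phi,s) \equiv P(U,s) \pmod{\pi}$ where $P(U,s)/(1-qs)$ is the zeta function of $U$. The number of slope-$0$ segments is then the degree of $P(U,s) \bmod \pi$, which equals $(\text{$p$-rank of }\overline{X}) + |S_\eta| - 1 = g + r_0 + r_1 + r_\infty - 1 = N$ by ordinariness (this is what the paper means by invoking Deuring--Shafarevich).

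Your upper bound via the column Hodge polygon is fine and is essentially the content of Corollary~\ref{c:globalHodgeBound}. Your lower bound is also correct in principle, but the ``main obstacle'' you flag---identifying the stable image of $U_p \bmod \pi$ on the affine coordinate ring $\overline{A}$ with a Cartier-operator invariant on the compactification $\overline{X}$, and then doing a Riemann--Roch count to get $N$---is exactly what the $L$-function congruence bypasses. The stable rank you want to compute \emph{is} the number of unit reciprocal roots of $Z(U,s)$, and the zeta-function factorization $P(U,s) = P(\overline{X},s)(1-s)^{|S_\eta|-1}$ reads this off immediately once you know $\overline{X}$ is ordinary. So your operator-theoretic argument ultimately reproves the same fact that the paper imports wholesale from the zeta function; it buys nothing extra here, though it would be the natural approach if one did not already have the trace formula linking $C(\Theta_q|V_\pi^\dagger,s)$ to $L(\phi,s)$.
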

		\begin{proof}
			Observe that $(M,\phi)$ is congruent to the trivial $\sigma$-module $(A_\pi^\infty,\sigma)$ modulo $\pi$. In particular, if we write $P(U,s)/(1-qs)$ for the zeta function of $U$, then
			\begin{equation*}
				L(\phi,s)	\equiv	P(U,s)	\pmod{\pi}.
			\end{equation*}
			The result follows from the Deuring-Shafarevich formula.
		\end{proof}
		
		Assume now that $\rho$ is $\pi$-adically $\bm\delta$-Hodge. We define the $r$-truncated \emph{$\pi_q$-adic Hodge polygon} of $(M,\phi)$ to be
		\begin{equation*}
			\HP_{\pi_q}^{<r} (\phi)	=	\NP \{\underbrace{0,\dots,0}_{N}\}\sqcup \bigsqcup_{P \in S} \HP_{\pi_q}^{< r} (\rho_P^\ext).
		\end{equation*}
		
		\begin{theorem}\label{t:main}
			Suppose that $\overline{X}$ is ordinary and that $\rho$ is $\pi$-adically $\bm\delta$-Hodge. Then $\NP_{\pi_q}^{<r} (\rho_P^\ext)$ and $\HP_{\pi_q}^{<r} (\rho_P^\ext)$ have the same terminal point for each $P \in S$ if and only if $\NP_{\pi_q}^{<r} (\phi)$ and $\HP_{\pi_q}^{<r} (\phi)$ have the same terminal point.
		\end{theorem}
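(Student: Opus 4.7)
The plan is to reformulate both sides of the biconditional as terminal-point statements about the $\pi$-adic Newton and column Hodge polygons of the global matrix $\Psi = \tilde{\Theta}|B^{\mathbf{m}_e}$, for $e$ chosen with $r \leq e \leq v_\pi(p)$ (taking $e = v_\pi(p)$ in characteristic $0$). By Corollary \ref{t:good basis computes on Vdagger} together with Lemma \ref{l:root},
\begin{equation*}
	\NP_\pi^{<r}(\Psi) = \NP_{\pi_q}^{<r}(\phi)^{\times v_p(q)},
\end{equation*}
while Proposition \ref{p:globalcolumn} combined with the $\bm\delta$-Hodge hypothesis identifies $\cHP_\pi^{<r}(\Psi)$ with $\HP_{\pi_q}^{<r}(\phi)^{\times v_p(q)}$: the $0$-block columns contribute $N v_p(q)$ zero-slope segments, the $P \in S$ columns realize $\HP_{\pi_q}^{<r}(\rho_P^\ext)^{\times v_p(q)}$ under $\bm\delta$-Hodge, and the tame columns with $\eta(P) = 1$ and $P \notin S$ all have slope $\geq e \geq r$ and drop out of the $<r$-truncation.

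Next I would apply Lemma \ref{l:globalperturbation} to realize the block matrix $\Psi'$ as an $e$-perturbation of $\Psi$. A direct check shows the $r$-perturbation relation is symmetric: on $I^{<r}(\Psi)$ the column slopes of $\Psi$ and $\Psi'$ agree and both exceed $r$ off of it, so $\Psi$ is likewise an $e$-perturbation of $\Psi'$. Hence Lemma \ref{l:perturb} applied in both directions yields
\begin{equation*}
	\NP_\pi^{<r}(\Psi) \text{ shares terminal point with } \cHP_\pi^{<r}(\Psi)
	\iff
	\NP_\pi^{<r}(\Psi') \text{ shares terminal point with } \cHP_\pi^{<r}(\Psi'),
\end{equation*}
and by the previous paragraph the left-hand condition is precisely the global terminal-point condition for $\phi$.

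The crux is then to decompose the right-hand condition along the block structure of $\Psi'$. Being block-lower-triangular with diagonal blocks $\Psi_{0,0}$, $\tilde{\Theta}|B_P^{\mathbf{m}_e}$ for $P \in S$, and zero blocks otherwise, the Fredholm determinant factors as
\begin{equation*}
	\det(I-s\Psi') = \det(I-s\Psi_{0,0}) \prod_{P \in S} \det(I-s\tilde{\Theta}|B_P^{\mathbf{m}_e}),
\end{equation*}
so both $\NP_\pi^{<r}(\Psi')$ and $\cHP_\pi^{<r}(\Psi')$ concatenate across these factors. For each $P \in S$, a local analog of Corollary \ref{t:good basis computes on Vdagger}, together with Lemma \ref{l:A1factorization} and the Dwork trace formula (valid in the disk $v_{\pi_q}(s) > -r$ since $r \leq v_\pi(p)$), identifies $\NP_\pi^{<r}(\tilde{\Theta}|B_P^{\mathbf{m}_e})$ with $\NP_{\pi_q}^{<r}(\rho_P^\ext)^{\times v_p(q)}$ and $\cHP_\pi^{<r}(\tilde{\Theta}|B_P^{\mathbf{m}_e})$ with $\HP_{\pi_q}^{<r}(\rho_P^\ext)^{\times v_p(q)}$.

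The main obstacle will be pinning down the $(0,0)$-block under ordinarity. The key observation is that for sufficiently small $\epsilon > 0$ every column of $\Psi$ outside the $0$-block has slope $\geq \epsilon$, so the truncated submatrix $\Psi^{<\epsilon}$ coincides with $\Psi_{0,0}$; Lemma \ref{l:slope0} then produces $N v_p(q)$ slope-$0$ segments in $\NP_\pi^{<\epsilon}(\Psi)$ matching the $N v_p(q)$ slope-$0$ columns of $\Psi$, and Lemma \ref{l:NHinteraction} forces $\NP_\pi(\Psi_{0,0}) = \{0,\dots,0\}_{N v_p(q)}$. This matches the slope-$0$ portion of the global Hodge polygon automatically, so the terminal-point condition on $\Psi'$ splits cleanly into the local terminal-point conditions at each $P \in S$, yielding the desired ``if and only if.''
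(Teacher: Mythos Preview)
Your proof is correct and follows essentially the same route as the paper's: both arguments apply Lemma~\ref{l:globalperturbation} to realize $\Psi'$ as an $r$-perturbation of $\Psi$, use the block-triangular shape of $\Psi'$ to split its Newton and column-Hodge polygons into the $(0,0)$-block and the local pieces at each $P\in S$, and invoke ordinarity (via Lemma~\ref{l:slope0}) to pin down the $(0,0)$-block contribution. Your explicit verification that the $r$-perturbation relation is symmetric is a genuine clarification, since Lemma~\ref{l:perturb} is stated as a one-way implication but the theorem needs both directions; the paper's ``the claim is immediate from the perturbation lemma'' leaves this step implicit.
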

		\begin{proof}
			Since $r \leq v_\pi(p)$, we may enlarge $e$ as necessary and assume that $r \leq e$. By Lemma \ref{l:globalperturbation} the matrix $\Psi'$ of \ref{ss:global touching} is a $\pi$-adic $r$-perturbation of $\Psi$. Since $\overline{X}$ is assumed to be ordinary, we know that $\NP_\pi^{< r} (\Psi) = \NP_{\pi_q}^{< r} (\phi)^{\times{v_p(q)}}$ must have $N v_p(q)$ segments of slope $0$. By the perturbation lemma, the same is true of $\Psi'$. Thus
			\begin{equation*}
				\NP_\pi^{<r} (\Psi') = \NP \{\underbrace{0,\dots,0}_{N v_p(q)}\} \sqcup \bigsqcup_{P \in S} \NP_{\pi_q}^{<r} (\rho_P^\ext)^{\times{v_p(q)}}.
			\end{equation*}
			and
			\begin{equation*}
				\cHP_\pi^{< r} (\Psi') = \NP\{\underbrace{0,\dots,0}_{N v_p(q)}\}\sqcup \bigsqcup_{P \in S} \HP_{\pi_q}^{< r} (\rho_P^\ext)^{\times v_p(q)}.
			\end{equation*}
			The claim is immediate from the perturbation lemma.
		\end{proof}
		
		\begin{corollary}\label{c:delta-hodge result}
			Suppose that $\overline{X}$ is ordinary and that $\rho:\pi_1(X) \to R^\times$ is $\pi$-adically $\bm\delta$-Hodge. Let $r \leq v_\pi(p)$. If $\NP_{\pi_q}^{<r} (\rho_P^\ext)$ and $\HP_{\pi_q}^{<r} (\rho_P^\ext)$ have the same terminal point for each $P \in S$, then $\NP_{\pi_q}^{<r} (\rho)$ and $\HP_{\pi_q}^{<r} (\rho)$ have the same terminal point.
		\end{corollary}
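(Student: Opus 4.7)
The plan is to deduce this corollary directly from Theorem \ref{t:main} by comparing the Newton and Hodge polygons of the $\sigma$-module $(M,\phi)$ with those attached to the character $\rho$ itself. The bridge between the two sides is the Artin-type factorization \eqref{eq:ArtinLFunction}, which expresses $L(\phi,s)$ as $L(\rho,s)$ multiplied by a finite product of linear factors $(1-\rho(\Frob_x)s)$ indexed by the $\F_q$-rational points $x \in |X-U| \setminus S$.

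First I would apply Theorem \ref{t:main} under the given hypotheses to obtain that $\NP_{\pi_q}^{<r}(\phi)$ and $\HP_{\pi_q}^{<r}(\phi)$ share a terminal point, and then show that the passage from $(M,\phi)$ back to $\rho$ subtracts the \emph{same} number of slope-$0$ segments from both polygons, so that the shared-terminal-point property transports intact. For the Newton side, each $x \in |X-U| \setminus S$ is a point at which $\rho$ is unramified, so $\rho(\Frob_x) \in R^\times$ has $\pi$-adic valuation $0$, and the factor $(1-\rho(\Frob_x)s)$ contributes exactly one slope-$0$ segment to $\NP_{\pi_q}(\phi)$. A direct count, using $S \subseteq \eta^{-1}(0)$ and the $\F_q$-rationality of branch points, yields $M := r_0+r_1+r_\infty-|S|$ such factors, which by \eqref{eq:rank} equals $N - (g-1+|S|)$.

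For the Hodge side, one reads off from the definition that $\HP_{\pi_q}^{<r}(\phi)$ comprises $N$ slope-$0$ segments together with the truncated local Hodge contributions $\HP_{\pi_q}^{<r}(\rho_P^\ext)$ for $P \in S$. The introduction's $\HP_q(\rho)$ instead comprises $g-1+|S|$ slope-$0$ segments, the same local contributions, and $g-1+|S|$ slope-$1$ segments; the latter sit at $\pi_q$-slope $v_\pi(p)$, so under the hypothesis $r \leq v_\pi(p)$ with strict truncation they drop out of $\HP_{\pi_q}^{<r}(\rho)$. Thus the Hodge polygons on the two sides again differ by exactly $M$ slope-$0$ segments. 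Combining both comparisons, the terminal points on the $\phi$-side are obtained from those on the $\rho$-side by the common translation $(M,0)$, so equality of terminal points transfers between the two sides and the corollary follows. The step I anticipate to be most delicate is not a new analytic estimate but rather the bookkeeping of slope multiplicities in the Hodge comparison -- particularly verifying that the slope-$1$ segments of $\HP_q(\rho)$ lie outside the truncation window for every admissible $r$. No new perturbation-theoretic input beyond Theorem \ref{t:main} should be required.
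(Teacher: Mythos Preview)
Your proposal is correct and follows essentially the same route as the paper: apply Theorem~\ref{t:main} to obtain matching terminal points for $(M,\phi)$, then use the factorization~\eqref{eq:ArtinLFunction} together with the count $M = r_0+r_1+r_\infty-|S| = N-(g-1+|S|)$ to cancel the same number of slope-$0$ segments from both the Newton and Hodge sides. Your additional remarks---that each $x \in S_\eta \setminus S$ is $\F_q$-rational (so contributes a single linear factor) and that the slope-$1$ portion of $\HP_q(\rho)$ sits at $\pi_q$-slope $v_\pi(p)$ and hence is discarded by the strict truncation $<r$---make explicit bookkeeping that the paper leaves implicit, but there is no genuine difference in strategy.
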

		\begin{proof}
			Let $(M,\phi)$ be the unit-root $\sigma$-module over $A_\pi^\infty$ corresponding to $\rho_U$. From the equality (\ref{eq:ArtinLFunction}), the Newton polygon $\NP_{\pi_q}^{< r} (\phi)$ is the concatenation of $\NP_{\pi_q}^{< r}(\rho)$ with $r_0+r_1+r_\infty-|S|$ segments of slope $0$. By definition, the Hodge polygon $\HP_{\pi_q}^{< r}(\phi)$ is a concatenation of $\HP_{\pi_q}^{< r}(\rho)$ with $r_0+r_1+r_\infty-|S|$ segments of slope $0$. The result follows by canceling out these extra slope-$0$ segments.
		\end{proof}
		
		\begin{corollary} \label{c:theorem local touching holds}
			Theorem \ref{intro theorem: local touching of polygons givesglobal touching} holds.
		\end{corollary}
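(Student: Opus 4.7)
The plan is to reduce the statement to Theorems~\ref{t:finite are delta-good} and~\ref{t:main}, with a bookkeeping step to reconcile the normalization conventions used in the introduction versus the body of the paper.

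First, I would take $R = \Z_p[\rho]$ with uniformizer $\pi$, so that $v_\pi(p) = v_{\pi_q}(q) = p^{n-1}(p-1)$; dividing $\pi_q$-adic slopes by this common factor recovers $q$-adic slopes. Thus the range $r \in [0,1]$ from the introduction corresponds to the range $r' \leq v_\pi(p)$ in the body, which is exactly where Theorem~\ref{t:main} applies. A direct rescaling identifies the slope set $\{k(p-1)/\delta_P : k \geq 1\}$ of $\HP(\delta_P)$ with the $q$-adic slope set $\{k/d_P : k \geq 1\}$ (using $\delta_P = d_P/p^{n-1}$), and its truncation below $1$ recovers the local Hodge polygon $\HP_q(\rho_P^\ext)$ with slope set $\{1/d_P, \ldots, (d_P-1)/d_P\}$ from the introduction. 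Thus the local Newton and Hodge polygons in the two setups agree combinatorially after rescaling.

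Next, Theorem~\ref{t:finite are delta-good} guarantees that the finite character $\rho$ of order $p^n$ is $\pi$-adically $\bm\delta$-Hodge with $\delta_P = d_P/p^{n-1}$, so the hypotheses of Theorem~\ref{t:main} are met. Theorem~\ref{t:main} then yields the desired iff at the level of the $\sigma$-module $(M,\phi)$ associated to $\rho_U$. To transfer this from $\rho_U$ to $\rho$, I would reuse the bookkeeping in the proof of Corollary~\ref{c:delta-hodge result}: by the factorization~\eqref{eq:ArtinLFunction}, the polygons $\NP_{\pi_q}^{<r'}(\phi)$ and $\HP_{\pi_q}^{<r'}(\phi)$ are obtained from $\NP_{\pi_q}^{<r'}(\rho)$ and $\HP_{\pi_q}^{<r'}(\rho)$ respectively by appending the same $r_0 + r_1 + r_\infty - |S|$ slope-$0$ segments. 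Cancelling these preserves the ``same terminal point'' condition in both directions of the iff. The slope-$1$ segments of $\HP_q(\rho)$ do not interfere, since the truncation is strictly below $r \leq 1$.

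The main obstacle is the normalization bookkeeping: carefully tracking the rescaling between $q$-adic, $\pi$-adic, and $\pi_q$-adic valuations, and matching the infinite polygon $\HP(\delta_P)$ in the body against the finite polygon $\HP_q(\rho_P^\ext)$ in the introduction under $r$-truncation. Once these identifications are pinned down, the corollary follows formally from Theorems~\ref{t:finite are delta-good} and~\ref{t:main}.
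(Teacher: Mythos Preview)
Your proposal is correct and follows essentially the same approach as the paper: invoke Theorem~\ref{t:finite are delta-good} to verify that finite characters are $\bm\delta$-Hodge, then apply the main theorem together with the slope-$0$ bookkeeping from Corollary~\ref{c:delta-hodge result}. You are in fact more careful than the paper's terse proof in two respects: you appeal directly to Theorem~\ref{t:main} (which is an if-and-only-if) rather than to Corollary~\ref{c:delta-hodge result} (which is stated only as a one-way implication), and you explicitly track the $q$-adic versus $\pi_q$-adic rescaling needed to match the introduction's conventions.
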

		\begin{proof}
			By Theorem \ref{t:finite are delta-good} we know that any finite character of order $p^n$
			is $\bm\delta$-Hodge, where $\delta_P$ is the Swan conductor at $P$ divided by $p^{n-1}$.
			The result is then an immediate consequence of Corollary \ref{c:delta-hodge result}.
		\end{proof}
	
		\begin{corollary} \label{c: theorem np=hp holds}
			Theorem \ref{intro theorem: HP is NP with congruence conditions} holds for finite characters of order $p$.
		\end{corollary}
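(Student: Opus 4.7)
The ``only if'' direction has already been established in Remark \ref{r: only if remark} for all $n \geq 1$, so the plan will focus on the ``if'' direction in the case $n=1$. In this case condition (\ref{cond:2}) is automatic, since $\delta_P = d_P$ is an integer Swan conductor, and the hypotheses reduce to: $\overline{X}$ is ordinary and $p \equiv 1 \pmod{d_P}$ for every $P \in S$.

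My first step will be to invoke Corollary \ref{c:theorem local touching holds} with $r = 1$. Since $\NP_q(\rho)$ and $\HP_q(\rho)$ are already known to share their terminal point, and both polygons contain the same number of slope-$1$ segments by construction, proving the full global equality $\NP_q(\rho) = \HP_q(\rho)$ is equivalent to proving the truncated coincidence $\NP_q^{<1}(\rho) = \HP_q^{<1}(\rho)$. Under the ordinarity hypothesis on $\overline{X}$, Corollary \ref{c:theorem local touching holds} then reduces this global statement to the family of local coincidences $\NP_q(\rho_P^\ext) = \HP_q(\rho_P^\ext)$ for every $P \in S$ (the local polygons have no slope-$1$ segments, so the $<1$ truncation is harmless).

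The remaining step will be to establish each of these local equalities on $\A_{\F_q}^1$. Since $\rho_P^\ext$ has order $p$ with Swan conductor $d_P$ at $\infty$, by class field theory it agrees, up to an unramified twist, with the character attached to an Artin--Schreier cover of $\A_{\F_q}^1$, so $L(\rho_P^\ext,s)$ coincides with the $L$-function of a one-variable exponential sum on $\G_m$ associated to a Laurent polynomial $f$ whose only pole is at $\infty$ of order $d_P$. The Hodge polygon $\HP_q(\rho_P^\ext)$, with slope set $\{1/d_P,\dots,(d_P-1)/d_P\}$, matches the classical Adolphson--Sperber Hodge polygon of this exponential sum, and the congruence $p \equiv 1 \pmod{d_P}$ is exactly Wan's ``ordinarity'' criterion from \cite{Wan4}, which forces the Newton polygon of such a single-variable sum to coincide with its Hodge polygon.

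The principal obstacle will be the bookkeeping needed to align our normalizations with the Adolphson--Sperber--Wan framework: one must verify that the Hodge polygon we defined via the Swan conductor $d_P$ really is the one attached by Adolphson--Sperber to the Laurent polynomial $f$, and that Wan's ordinarity criterion applies verbatim in the one-variable situation where $f$ has a single pole of order $d_P$ at $\infty$. Once this translation is set up, the $n=1$ case of Theorem \ref{intro theorem: HP is NP with congruence conditions} drops out of the classical theory with no additional input.
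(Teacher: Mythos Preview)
Your proposal is correct and follows essentially the same route as the paper: both reduce the ``if'' direction to the local equalities $\NP_q(\rho_P^\ext)=\HP_q(\rho_P^\ext)$ via the main local-to-global theorem, and then appeal to the classical one-variable result under the congruence $p\equiv 1\pmod{d_P}$. The only difference is bibliographic: the paper cites Robba \cite{Robba-lower_bounds_NP} (with \cite{Blache-Ferard-Newton_stratum} as a secondary reference) for the local coincidence, whereas you invoke the Adolphson--Sperber/Wan framework from \cite{Wan4}; either citation suffices.
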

		\begin{proof}
			By Remark \ref{r: only if remark} we only need to show the ``if'' direction. By our assumption on the order
			of $\rho$ we have $d_P \in \Z$ and $p\equiv 1 \mod d_P$. By \cite{Robba-lower_bounds_NP} (see also \cite[Remark 4.1]{Blache-Ferard-Newton_stratum})
			we know $\NP_q(\rho_P^\ext)=\HP_q(\rho_P^\ext)$ for each $P\in S$. The result follows from
			Theorem \ref{t:finite are delta-good}.
		\end{proof}

	\section{Glossary of notation}
	The spaces studied in this paper are numerous and intricate. For this reason, 
	we thought the reader would benefit from a notation glossary.
	\subsection{Basic definitions}
	\begin{enumerate}
		\item[$R$] A complete discrete valuation ring with maximal ideal $\frakm$ and residue field $\mathbb{F}_p$.
		\item[$\pi$] A non-zero topologically nilpotent element of $R$.
		\item[$K$] The fraction field of $R$, i.e. $K=R[\frac{1}{\pi}]$.
		\item[$q$] A power of $p$.
		\item[$R_q$] The unramified extension of $R$ whose residue field is $\mathbb{F}_q$. 
		\item[$K_q$] The fraction field of $R_q$, i.e. $K_q=R_q[\frac{1}{\pi}]$.
		\item[$\pi_q$] The $v_p(q)$-th power of $\pi$: i.e. $\pi_q=\pi^{v_p(q)}$.
		\item[$X$] A smooth affine curve over $\mathbb{F}_q$.
		\item[$\overline{X}$]  The smooth compactification of $X$.
		\item[$S$] The points of $\overline{X}$ at infinity, i.e. $S=\overline{X}\backslash X$.
		\item[$\eta$] A tame morphism $\overline{X} \to \mathbb{P}_{\mathbb{F}_q}^1$
		that is only ramified over $\{0,1,\infty\}$. See \S \ref{ss:mapping} for more details.
		\item[$S_\eta$] The branch points of $\eta$, i.e. $S_\eta = \eta^{-1}(\{0,1,\infty\})$.
		\item[$\mu$] A function from $S_\eta$ to $\Z$. More precisely, if $\eta(P)\in\{0,\infty\}$ we 
		have $\mu(P)=0$ and if $\eta(P)=1$ we have $\mu(P)=p-1$.
		\item[$A$] A smooth $\Z_q$-algebra whose special fiber is the coordinate ring of
		$\overline{X}-S_\eta$.
	\end{enumerate}
	
	\subsection{The space \texorpdfstring{$A$}{A}, its modifications, and some associated notation}
		In this article, we frequently work with modifications to the space of functions $A$.
		These modified spaces are notated using combinations of superscripts, subscripts, and font changes. These modifiers are occasionally applied to spaces other than $A$. However, 
		for expository purposes, it seems prudent to use $A$ as our `running example'. We will now
		explain these notation modifiers, as well as explain some other notation for objects associated to $A$.
		
		\begin{enumerate}
			\item[$A_\pi$] The ring $R \otimes_{\Z_p} A$.
			\item[$A_\pi^\infty$] The $\pi$-adic completion of $A_\pi$.
			\item[$A_\pi^{m}$] The elements of $A_{\pi}^\infty$ that
			`overconverge' $\pi$-adically with a radius of $m$. See \S \ref{ss:growth} for more details.
			\item[$A_\pi^\dagger$] The elements of $A_{\pi}^\infty$ that `overconverge' $\pi$-adically. In particular, it is the union of all $A_\pi^m$.
			\item[$V_\pi^\dagger$] The $K$-vector space $K \otimes_R A_\pi^\dagger$. More
			generally, when $A$ is replaced with $V$, this signifies tensoring with $K$.
			\item[$\tilde{A}_\pi^{\dagger}$] The semi-local `twist' of $A_\pi^\dagger$.
			\item[$\mathcal{A}_{\pi,P}$] The completion of $A_\pi$ along the ideal defining $P$.
			We use the superscripts $m$, $\dagger$, and $\infty$ for the appropriate $\pi$-adic
			growth conditions. 
			\item[$\mathbf{m}$] A tuple $(m_P)_{P \in S_\eta}$ of rational numbers indexed by $S_\eta$.
			\item[$\mathcal{A}_{\pi,P}^{\mathbf{m}}$] This is
			a subspace of $\mathcal{A}_{\pi,P}^\dagger$ with very precise growth conditions
			depending on $\eta(P)$ and $\mathbf{m}$. See \S \ref{ss:case1} and \S \ref{ss:case2} for the precise definitions. 
			\item[$\mathcal{A}_{\pi,P}^\tr$] The subspace of elements in $\mathcal{A}_P$
			of `truncated' power series. More precisely, elements
			that can be written as $\sum\limits_{k=\mu(P)}^\infty a_k t_P^{-k}$.
			This will frequently be combined with another `growth' superscript, such as
			$\dagger$, $\infty$, or $\mathbf{m}$.
			\item[$\mathcal{A}_\pi$] The product $\prod\limits_{P \in S_\eta} \mathcal{A}_{\pi,P}$. 
			We use the superscripts $\mathbf{m}$, $\dagger$, and $\infty$ for the appropriate $\pi$-adic
			growth conditions. We also use the $\tr$ superscript to denote the `truncated' power series in each summand. 
		
		\end{enumerate}
		
\bibliographystyle{plainnat}
\bibliography{Stable}

\end{document}